\newtheorem*{theorem*}{}
\def\NAT@spacechar{~}
\let\clearpage\relax
\declaretheorem[name = Theorem, numberwithin = section, style = plain, refname = {Theorem,Theorems}, Refname = {Theorem,Theorems}]{theorem}
\declaretheorem[name = Corollary, numberlike = theorem, style = plain, refname = {Corollary,Corollaries}, Refname = {Corollary,Corollaries}]{corollary}
\declaretheorem[name = Lemma, numberlike = theorem, style = plain, refname = {Lemma,Lemmas}, Refname = {Lemma,Lemmas}]{lemma}
\declaretheorem[name = Conjecture, numberlike = theorem, style = plain, refname = {Conjecture,Conjectures}, Refname = {Conjecture, Conjetures}]{conjecture}
\newcommand{\N}{\mathbb{N}}
\newcommand{\Z}{\mathbb{Z}}
\newcommand{\T}{\mathbb{T}}
\newcommand{\G}{\mathbb{G}}
\newcommand{\defn}[1]{\textcolor{cMaroon}{\emph{#1}}}
\renewcommand{\geq}{\geqslant}
\renewcommand{\leq}{\leqslant}
\DeclarePairedDelimiter{\ceil}{\lceil}{\rceil}
\DeclarePairedDelimiter{\floor}{\lfloor}{\rfloor}
\definecolor{cMaroon}{HTML}{93152a}
\definecolor{cYellow}{HTML}{f0e442}
\definecolor{cOrange}{HTML}{f7903b}
\definecolor{cRed}{HTML}{ff3341}
\definecolor{cPurple}{HTML}{da58c2}
\definecolor{cIndigo}{HTML}{8a58da}
\definecolor{cDeepBlue}{HTML}{4a8ce8}
\definecolor{cLightBlue}{HTML}{43e0ef}
\definecolor{cGreen}{HTML}{3ef450}
\tikzset{not_vert/.style={
    fill=white,
    text=black,
    draw opacity=0,
    minimum width=0
}}
\newcommand{\CartProd}{\mathbin{\square}}
\begin{document}

\title{Constructions, bounds, and algorithms for peaceable queens} 
\author{Katie Clinch}
\address[Katie Clinch]{School of Computer Science and Engineering \\ UNSW \\ Australia}
\email{k.clinch@unsw.edu.au} 

\author{Matthew Drescher}
\address[Matthew Drescher]{Department of Electrical and Computer Engineering \\ University of California, Davis \\ USA}
\email{knavely@gmail.com} 

\author{Tony Huynh}
\address[Tony Huynh]{Department of Computer Science \\ Sapienza Università di Roma \\ Italy}
\email{tony.bourbaki@gmail.com}

\author{Abdallah Saffidine}
\address[Abdallah Saffidine]{School of Computer Science and Engineering \\ UNSW \\ Australia}
\email{abdallah.saffidine@gmail.com}




\begin{abstract} 
The \defn{peaceable queens problem} asks to determine the maximum number $a(n)$ such that there is a placement of $a(n)$ white queens and $a(n)$ black queens on an $n \times n$ chessboard so that no queen can capture any queen of the opposite color.

In this paper, we consider the peaceable queens problem and its variant on the toroidal board.  For the regular board, we show that $a(n) \leq 0.1716n^2$, for all sufficiently large $n$.  This improves on the bound $a(n) \leq 0.25n^2$ of van Bommel and MacEachern~\cite{BM18}.  

For the toroidal board, we provide new upper and lower bounds. 
Somewhat surprisingly, our bounds show that there is a sharp contrast in behaviour between the odd torus and the even torus.  Our lower bounds are given by explicit constructions.  For the upper bounds, we formulate the problem as a non-linear optimization problem with at most $100$ variables, regardless of the size of the board. We solve our non-linear program exactly using modern optimization software.  

We also provide a local search algorithm and a software implementation which converges very rapidly to solutions which appear optimal.  Our algorithm is sufficiently robust that it works on both the regular and toroidal boards.  For example, for the regular board, the algorithm quickly finds the so-called \defn{Ainley construction}.  Thus, our work provides some further evidence that the Ainley construction is indeed optimal.  
\end{abstract}
\keywords{}
\maketitle


\section{Introduction}

The \defn{peaceable queens problem} asks to determine the maximum number $a(n)$ such that there is a placement of $a(n)$ white queens and $a(n)$ black queens on an $n \times n$ chessboard so that no queen can capture any queen of the opposite color.
It was posed by Bosch~\cite{bosch1999} in 1999, and was added to the Online Encyclopedia of Integer Sequences (OEIS) in 2014 by Donald E. Knuth.

Although very simple to state, the peaceable queens problem has proven to be an extremely difficult optimization problem~\cite{Barbara2004, kane2017, Petrie2002, NW2013, LS14} with surprisingly varied and aesthetic optimal solutions.  Currently, only the first 15 terms of $a(n)$ are known \cite{oeis2500}.  By a popular vote of the editors of the Online Encyclopedia of Integer Sequences (OEIS), the sequence $a(n)$ was awarded notable sequence number A250000. The problem was popularized by N.J.A. Sloane in his \emph{Notices of the AMS} article~\cite{sloane18}, as well as his delightful Numberphile video on YouTube~\cite{sloanepeaceableVid}. 

The earliest known reference for the peaceable queens problem is actually Stephen Ainley's 1977 book \emph{Mathematical Puzzles}~\cite{ainley1977mathematical}. In his book, Ainley gives a construction which shows that $a(n) \geq \lfloor\frac{7}{48} n^2\rfloor \approx 0.1458 n^2$. 
\begin{figure}[h] 
\centering
\includegraphics[width=2.2in]{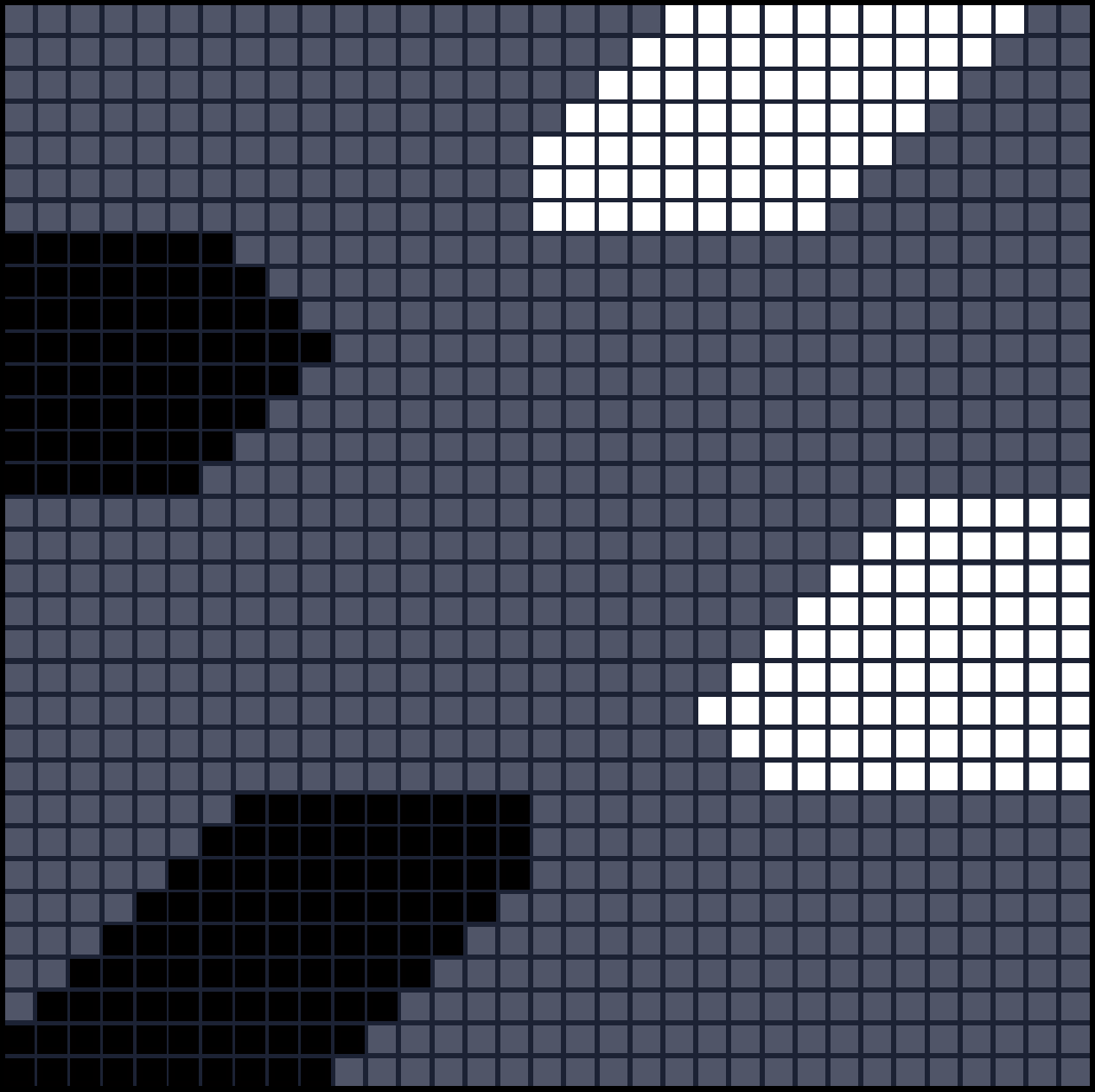}
\caption{Ainley's construction for $n=33$.  The number of white queens is $158=\lfloor \frac{7}{48}n^2 \rfloor$.}
\label{fig:ainley}
\end{figure}
This construction (see Figure~\ref{fig:ainley}) was rediscovered by Benoit Jubin in 2015, and later Yao and Zeilberger conjectured that Ainley's construction is optimal~\cite{yao2022numerical}.  Our work also seems to support this conjecture.  

There are natural extensions of the peaceable queens problem to other ``surfaces'' by identifying the boundary of the board appropriately (see~\cite{harris2013bishop}).  Our methods can be easily adapted to work on all such surfaces.  However, for simplicity, we focus on the regular board and the toroidal board, whose corresponding sequences we denote by $a(n)$ and $t(n)$, respectively. 
 The sequence $t(n)$ also appears in the OEIS as sequence number A279405~\cite{oeis279}.   At the time of writing, only the first 12 terms of $t(n)$ are known, due to Zabolotskiy and  Huchala~\cite{oeis279}. 
However, with these first few terms it is already clear that the behaviour of $a(n)$ and $t(n)$ differ substantially. For example, it is obvious that $a(n)$ is non-decreasing since the $n \times n$ regular board embeds into the $(n+1) \times (n+1)$ regular board.  However, this is not true for the toroidal board, since $t(8)> t(9)$.
 
The following are our main results.  

\begin{theorem}[Even torus lower bound] \label{thm:eventoruslowerbound}
\[t(n) \geq 0.1339n^2,\] for all sufficiently large even $n$. 
 \end{theorem}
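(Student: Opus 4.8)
The plan is to prove the bound by an explicit construction, after first recasting peaceability in a purely combinatorial form. The key preliminary observation is that the queen sitting at the boundary between a white run and a black run along any line always attacks across it, no matter how the line is packed; hence a placement is peaceable if and only if every line is monochromatic. On the torus $\Z_n \times \Z_n$ there are exactly $n$ lines in each of the four directions — rows $y$, columns $x$, diagonals $x-y$, and anti-diagonals $x+y$ — each of length $n$. So a peaceable placement is the same as a pair of cell sets $\wQueens, \bQueens \subseteq \Z_n \times \Z_n$ whose sets of used rows are disjoint, and likewise for columns, diagonals, and anti-diagonals, with $\card{\wQueens} = \card{\bQueens}$ as large as possible. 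I would record this equivalence first, since it removes all chess-specific reasoning and turns the task into choosing two cell sets that are ``line-disjoint'' in all four directions.

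With this reformulation in hand, I would next pin down the barrier any construction must beat. Since the used rows are split between the two colors, $\wQueens$ occupies at most $n/2$ rows and at most $n/2$ columns, so it lies in an $(n/2)\times(n/2)$ sub-board; inside such a box the diagonal/anti-diagonal parity coupling forces the checkerboard to be optimal, giving only $\tfrac18 n^2$. The construction must therefore be genuinely two-dimensional, spreading both colors across essentially all rows and columns rather than confining them to axis-aligned quadrants. The even case is special precisely because $2$ is not invertible modulo $n$: the map $(x,y)\mapsto(x-y,\,x+y)$ is two-to-one with image constrained to equal parities, and this is exactly what pins the achievable density to the stated constant and separates the even torus from the odd torus.

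The concrete steps I would carry out are: (i) exhibit a fixed periodic pattern on $\Z_k \times \Z_k$ for a suitable small even $k$ — located by the local-search routine used elsewhere in the \ManuscriptType{} and then verified by hand — that satisfies the four direction-disjointness conditions and has per-color density at least $0.1339$; (ii) prove a tiling lemma stating that any line-monochromatic coloring of $\Z_k \times \Z_k$ pulls back along the quotient $\Z_{km}\times\Z_{km} \to \Z_k\times\Z_k$ to a line-monochromatic coloring of $\Z_{km}\times\Z_{km}$ of the same density, which is immediate since each line of the large torus projects into a single line of the small torus; this settles all $n$ divisible by $k$; and (iii) handle the remaining even $n$ by laying down the periodic pattern and deleting every queen lying in the $O(1)$ rows, columns, diagonals, and anti-diagonals near the wrap-around seam. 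Deletions never create a conflict, so the placement stays peaceable, and only $O(n)$ queens are lost, which is absorbed into ``sufficiently large'' because the pattern density is strictly above $0.1339$.

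The main obstacle is step (i): designing a two-dimensional interlocking pattern that breaks the $\tfrac18$ product barrier while simultaneously meeting all four disjointness constraints, and then certifying its exact per-color density. Verifying the four constraints for an ad hoc pattern is error-prone, so I would phrase the check as four disjointness-of-image statements for the maps $y$, $x$, $x-y$, and $x+y$ restricted to the two color classes, reducing the whole verification to a finite computation on the $k \times k$ tile. The density bookkeeping, the choice of the best period $k$, and the reconciliation of the resulting constant with the matching upper bound from the non-linear program are the remaining technical points, none of which affect the overall shape of the argument.
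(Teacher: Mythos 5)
Your opening reformulation (each row, column, $\Z_n$-diagonal and $\Z_n$-skew-diagonal may carry queens of only one color) matches the paper's hypergraph setup, and your tiling lemma for $n$ divisible by the period $k$ is sound. But the proposal has two genuine gaps, one of which is fatal to the plan as written.

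First, step (i) is not a proof step --- it is the theorem. You defer the entire mathematical content (``exhibit a fixed periodic pattern \dots with per-color density at least $0.1339$'') to an unspecified pattern to be found by local search, and you yourself flag it as the main obstacle. The paper supplies this object explicitly: the $(a,b)$-plaid, in which every black cell $(i,j)$ has $i+j$ odd and every white cell has $i+j$ even. Since every $\Z_{2n}$-diagonal and $\Z_{2n}$-skew-diagonal of the even torus is parity-consistent (the parity of $i+j$ is constant along it), the diagonal constraints are satisfied for free, and the rectangle structure handles rows and columns; taking $a,b\approx(2-\sqrt3)n$ balances the two colors at density $\tfrac{2-\sqrt3}{2}\approx 0.13397$. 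Note also that the target constant is irrational and exceeds $0.1339$ by only about $7\times10^{-5}$, so any fixed $k\times k$ tile clearing that threshold must be large (the smallest plaid-based tile I can see works at roughly $k\approx 418$); ``verified by hand'' is not realistic, and the paper's construction, whose parameters scale with $n$, sidesteps the issue entirely.

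Second, step (iii) fails on the torus. If $n\not\equiv 0\pmod k$, a $\Z_n$-diagonal $\{(i,j): i-j\equiv c \pmod n\}$ decomposes inside $[n]\times[n]$ into the two plane-diagonals $i-j=c$ and $i-j=c-n$, which lie in \emph{different} residue classes modulo $k$. So the wrap-around does not create a localized seam touching $O(1)$ lines: essentially every one of the $n$ torus diagonals (and skew-diagonals) is glued from two tile-classes that may carry opposite colors, and repairing all of them can cost $\Theta(n^2)$ queens, not $O(n)$. This is exactly the kind of global wrap-around obstruction that makes the torus non-monotone ($t(8)>t(9)$), and it is why the paper works directly on $\T_{2n}$ for every even board size rather than reducing to a periodic tile.
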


The lower bound in~Theorem~\ref{thm:eventoruslowerbound} is given by an explicit construction.  See~Figure~\ref{fig:eventoruslower}.  

\begin{figure}[h] 
\centering
\includegraphics[width=2.2in]{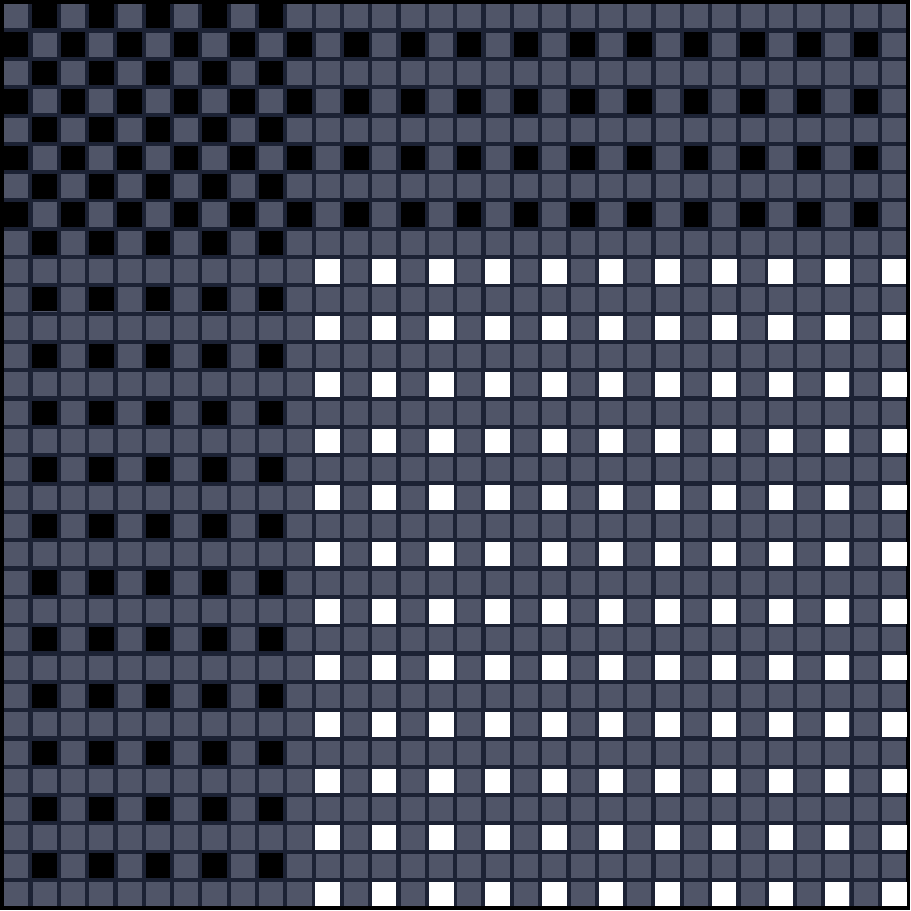}
\caption{Lower bound construction for the even torus.}
\label{fig:eventoruslower}
\end{figure}

\begin{theorem}[Even torus upper bound] \label{thm:eventorusupperbound}
\[t(n) \leq 0.1402n^2,\] for all even $n$.
\end{theorem}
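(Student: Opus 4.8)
The plan is to convert the finite combinatorial problem into a fixed-size nonlinear program whose optimum furnishes the constant $0.1402$. Fix an optimal peaceable configuration on the even torus $\Z \times \Z$ (mod $n$) with $\torusNumber = \card{\wQueens} = \card{\bQueens}$ white and black queens. The four families of attack lines --- rows, columns, diagonals, and anti-diagonals, indexed by $\lineRow, \lineCol, \lineDia, \lineAnt$ --- each consist of $n$ lines of $n$ cells. For each colour $P \in \{\wPlayer, \bPlayer\}$ and each direction $d$, record the number of $P$-occupied lines $\auxNbLines{d}{P}$ and the set $\auxLineDominatedCells{d}{P}$ of cells lying on such a line. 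The no-capture condition is exactly that every black queen avoids $\bigcup_d \wLineDominatedCells{d}$ and every white queen avoids $\bigcup_d \bLineDominatedCells{d}$. After dividing through by $n^2$, I pass to scaling variables $\rho^P_d := \auxNbLines{d}{P}/n$ and $\beta := \torusNumber/n^2$.

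First I would extract the elementary constraints that hold at every finite $n$. Since each line contains at most $n$ queens, $\beta \le \rho^P_d$ for all $P$ and $d$; since black queens occupy only white-free rows and columns, $\beta \le (1-\rho^{\wPlayer}_{\lineRow})(1-\rho^{\wPlayer}_{\lineCol})$, and symmetrically for the other direction-pairs and for the swapped colours. The crucial feature of the \emph{even} torus enters here: the coordinate change $(x,y)\mapsto(x+y, x-y)$ is two-to-one onto the sublattice of equal-parity pairs, so the diagonal/anti-diagonal family splits the board into two parity classes. This degeneracy both weakens the naive diagonal bound and must be tracked separately on the two classes, and it is ultimately responsible for the odd/even dichotomy highlighted in the introduction.

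These coarse inequalities alone only recover something near the $0.25n^2$ regime, so the heart of the argument is a finer bookkeeping. I would refine $\auxLineDominatedCells{d}{P}$ by stratifying cells according to their full \emph{coverage profile} --- which of the four opposing line-families dominate them, and in which parity class they sit --- and introduce, for each profile, variables counting the cells of that type and the queens of each colour placed there. The discrepancies $\auxDeltaLines{d}{P}$ measure how far an occupied line deviates from saturation, and linking them to the $\rho^P_d$ via inclusion--exclusion on the dominated regions yields a closed system of polynomial inequalities in at most $100$ variables, independent of $n$. Maximizing $\beta$ over this system is the desired nonlinear program; its feasibility is implied by any peaceable placement, so its optimum bounds $\limsup \torusNumber/n^2$ from above, and because every constraint is homogeneous and exact at finite $n$ the bound $\torusNumber \le 0.1402\,n^2$ holds for all $n$.

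The main obstacle is twofold. First, I must choose the stratification fine enough that the program's optimum drops from the $0.25$ ballpark down to $0.1402$, yet coarse enough to keep the variable count bounded and the constraints genuinely \emph{valid} lower bounds on the dominated areas --- an over-aggressive inclusion--exclusion estimate would make the program unsound, producing a bound that is not actually proved. Second, the resulting program is nonconvex, so a local solver does not suffice: I would need to certify \emph{global} optimality, either via a rigorous global-optimization solver or by exhibiting explicit dual/KKT certificates together with verified interval arithmetic, so that $0.1402$ is a proven ceiling rather than a numerical artifact.
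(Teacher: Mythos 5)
Your proposal follows essentially the same route as the paper: refine the diagonal and anti-diagonal families by parity (exploiting that an even diagonal meets an even anti-diagonal in two cells on the even torus), stratify cells by their coverage profile into the regions of a Venn diagram with a bounded number of variables, impose the exact product constraints on intersections, and maximize the queen density over the resulting nonlinear program, whose optimum ($\approx 0.14013$) yields the stated bound for all $n$ since the torus constraints carry no error terms. Your closing concern about certifying global optimality of the nonconvex program is well taken --- the paper itself relies on a local interior-point solver (IPOPT) and does not supply a separate global-optimality certificate --- but this is a feature of the paper's argument as well, not a divergence from it.
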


\begin{theorem}[Odd torus lower bound] \label{thm:oddtoruslowerbound}
\[t(n) \geq 0.0833 n^2,\] for all sufficiently large odd $n$. 
\end{theorem}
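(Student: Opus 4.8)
The plan is to prove \Cref{thm:oddtoruslowerbound} by exhibiting an explicit placement of white and black queens on the odd torus $\Z_n \times \Z_n$ and showing that it is peaceable with at least $\tfrac{1}{12}n^2 - O(n)$ queens of each colour (note that $\tfrac1{12} > 0.0833$, so this suffices once $n$ is large). The first step is to recast peaceability in a form that is easy to check. For a set $S$ of queens write $\rho(S),\gamma(S),\delta(S),\alpha(S)\subseteq\Z_n$ for the sets of rows $i$, columns $j$, diagonals $i-j$, and anti-diagonals $i+j$ that are occupied by some queen of $S$. A placement $(W,B)$ is peaceable if and only if $\rho(W)\cap\rho(B)=\emptyset$, and likewise for $\gamma,\delta,\alpha$; that is, the two colours must use disjoint sets of lines in each of the four pencils. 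Thus the global non-attacking condition reduces to four one-dimensional disjointness conditions, which I can verify pencil by pencil.

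A useful feature of the \emph{odd} torus is that $2$ is invertible modulo $n$, so the map $\phi(i,j)=(i+j,\;i-j)\pmod n$ is a bijection of $\Z_n\times\Z_n$ that permutes the four pencils: it exchanges the two axis-parallel families with the two diagonal families. I would use $\phi$ both to streamline the verification, since checking the diagonal pencils then becomes formally identical to checking rows and columns, and to explain structurally why the odd case behaves differently from the even case of \Cref{thm:eventoruslowerbound}, where $\gcd(2,n)=2$ lets the diagonals respect a fixed parity class and hence admits a denser arrangement.

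For the construction itself I would take $W$ to be a region of $\Z_n\times\Z_n$ cut out by a bounded number of band constraints of the form $i\in I$, $j\in J$, $i-j\in K$, $i+j\in L$ with $I,J,K,L$ arcs of $\Z_n$, and take $B$ to be the image of $W$ under the point reflection $(i,j)\mapsto(-i,-j)$. This symmetry makes $|B|=|W|$ automatic and turns the four disjointness conditions into the single requirement that each of the four shadows of $W$ avoid its own negation in $\Z_n$. A single such band only reaches density $\tfrac{1}{16}<\tfrac1{12}$ per colour, so the region must interleave several bands whose four shadows pack efficiently into $\Z_n$; I would read off the right family from the near-optimal configurations produced by our local-search algorithm, then define it cleanly for every sufficiently large odd $n$. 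Divisibility mismatches between $n$ and the period of the pattern would be absorbed by adjusting a bounded number of rows and columns, which changes each colour count by only $O(n)$.

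The final step is the count: show that $W$, and hence $B$, contains at least $\tfrac1{12}n^2 - O(n)$ cells. I expect the main obstacle to be the design of the region rather than the bookkeeping. On the torus every diagonal and anti-diagonal is a single $n$-cycle, so a band cannot simply run off the edge as it does on the regular board; it wraps around, and the interleaved white and black bands must be arranged so that no wrapped diagonal ever carries both colours. Guaranteeing this while simultaneously keeping all four shadows small enough to leave room for an equally large opposite-colour region is the crux; once the region is fixed, what remains is a routine area computation together with an $O(n)$ rounding argument.
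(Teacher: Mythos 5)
Your framework is sound --- recasting peaceability as four shadow-disjointness conditions in $\Z_n$ is exactly the right way to verify a construction, the point-symmetry trick for forcing $|B|=|W|$ is legitimate, and your target density of $\tfrac{1}{12}n^2$ matches what the paper actually proves. But the proposal stops short of the theorem's entire content: you never specify the region. You correctly identify the design of the region as ``the crux'' and then defer it to ``reading off the right family from the near-optimal configurations produced by the local-search algorithm,'' which is a plan for finding a proof, not a proof. Without an explicit region whose four shadows are verified disjoint from those of its reflection and whose area is computed, nothing is established beyond the trivial single-band bound you yourself note is too weak.

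The missing idea, concretely, is the paper's \emph{$n$-argyle}, and it does not quite fit the template you propose. Rather than cutting $W$ out by arcs $I,J$ in the row and column pencils --- which confines $W$ to an $|I|\times|J|$ rectangle and wastes the row/column budget --- the paper puts $W$ on the sublattice of cells with \emph{both coordinates odd} and $B$ on the sublattice with both coordinates even. Since $n$ is odd this is not an arc condition, but it makes the row and column shadows disjoint for free while letting each colour spread over a large region. The remaining budget is spent on bands: $W$ is further restricted to a $\Z_n$-diagonal band of width roughly $\tfrac{2n}{3}$ and to the complement of a middle skew-diagonal band, with $B$ taking the complementary choices. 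Each colour then occupies a region of area about $\tfrac{1}{3}n^2$ (twelve of thirty-six congruent triangles in the paper's accounting) at sublattice density $\tfrac14$, giving $\tfrac{1}{12}n^2-O(n)$. Note also that verifying the skew-diagonal disjointness is more delicate than your ``pencil by pencil'' reduction suggests: both $W$ and $B$ live on cells with $i+j$ even, so the separation of their skew-diagonal shadows relies on the interaction between the band boundaries and the fact that $k$ and $k+n$ have opposite parities when $n$ is odd. Your observation that $2$ is invertible mod $n$ is true and is indeed why the odd torus behaves differently from the even one, but it is doing no work here absent the actual construction.
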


The lower bound in~Theorem~\ref{thm:oddtoruslowerbound} is given by an explicit construction.  See~Figure~\ref{fig:Odd_Lower_ProofIdea}. 

\begin{figure}[ht]
    \centering
    \includegraphics[width=2.2in]{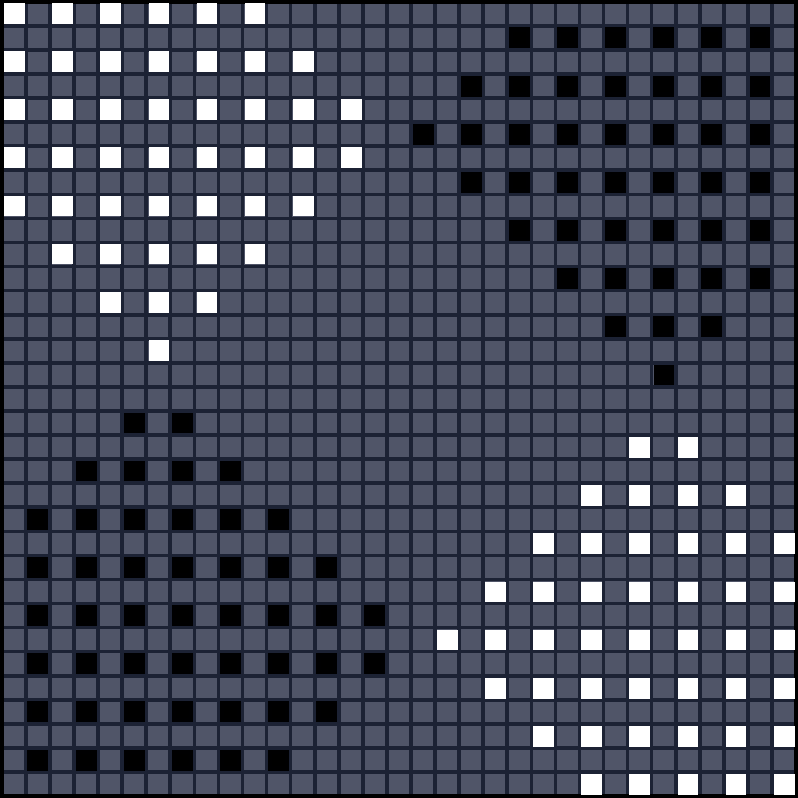}
    \caption{Lower bound construction for the odd torus.}
    \label{fig:Odd_Lower_ProofIdea}
\end{figure}

\begin{theorem}[Odd torus upper bound]\label{thm:oddtorusupperbound}
\[t(n) \leq 0.125n^2,\] for all odd $n$.  
\end{theorem}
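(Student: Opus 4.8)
The plan is to reduce the theorem to a clean statement about colourings of the $4n$ lines of the torus, and then to exploit the special arithmetic of the odd torus. First I would record the basic structural fact: on the torus every line (row, column, diagonal, or anti-diagonal) is a \emph{cycle}, so if such a line contained queens of both colours then, reading its occupied cells cyclically, some two consecutive occupied cells would have opposite colours and hence attack each other (note this holds whether or not one allows blocking, since a cycle cannot be properly separated). Thus every line is monochromatic, and a peaceable placement induces a colouring of the $4n$ lines by $\{\wPlayer,\bPlayer,\varnothing\}$. A white queen can occupy only a cell all four of whose lines are white; writing $Q_\wPlayer$ (resp.\ $Q_\bPlayer$) for the number of such ``all-white'' (resp.\ ``all-black'') cells, we get $\card{\wQueens}\le Q_\wPlayer$ and $\card{\bQueens}\le Q_\bPlayer$, so since both colours use $t(n)$ queens it suffices to prove $\min(Q_\wPlayer,Q_\bPlayer)\le n^2/8$.

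Next I would bring in the arithmetic of odd $n$. Because $2$ is invertible modulo an odd $n$, any two of the four coordinate functions $i,\ j,\ i-j,\ i+j$ determine the cell $(i,j)$ bijectively; equivalently $(i,j)\mapsto(i-j,\,i+j)$ is a bijection of the torus permuting the four line-directions. Letting $r_\wPlayer,c_\wPlayer,d_\wPlayer,a_\wPlayer$ be the numbers of white rows, columns, diagonals and anti-diagonals (and similarly for black), bijectivity of each coordinate pair yields the six product bounds $Q_\wPlayer\le r_\wPlayer c_\wPlayer$, $Q_\wPlayer\le d_\wPlayer a_\wPlayer$, $Q_\wPlayer\le r_\wPlayer d_\wPlayer$, and so on, while monochromaticity gives the disjointness constraints $r_\wPlayer+r_\bPlayer\le n$, $c_\wPlayer+c_\bPlayer\le n$, $d_\wPlayer+d_\bPlayer\le n$, $a_\wPlayer+a_\bPlayer\le n$.

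The heart of the argument, and the step I expect to be the main obstacle, is that these product and disjointness constraints are by themselves too weak: optimised alone they only give $\min(Q_\wPlayer,Q_\bPlayer)\le n^2/4$, attained at the ``balanced'' point where all eight quantities equal $n/2$. The factor-of-two improvement must come from a genuinely two-dimensional consistency fact, namely that a cell-set cannot be saturated simultaneously in the row--column and the diagonal--anti-diagonal coordinate systems. The tension is additive-combinatorial: to free diagonals for black, white would concentrate into few diagonals, but a configuration making $Q_\wPlayer$ close to $r_\wPlayer d_\wPlayer$ has its occupied columns filling the difference set $R_\wPlayer-D_\wPlayer$, whence Cauchy--Davenport (for prime $n$, and Kneser's theorem for composite $n$) forces $c_\wPlayer\ge\min(n,\,r_\wPlayer+d_\wPlayer-1)$, so white then occupies many columns and steals them from black. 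Making this robust for arbitrary (non-box) white sets, rather than only the extremal ones, is the delicate part.

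Finally I would feed the refined inequalities together with the four disjointness constraints into a small optimisation in the eight variables $r_\wPlayer,\dots,a_\bPlayer$ and verify that its optimum is exactly $n^2/8$; this is precisely the kind of low-dimensional non-linear program solved elsewhere in the \ManuscriptType, and the clean value $\tfrac18$ (in contrast to the messier even-torus constant) reflects the fact that for odd $n$ the coordinate-pair maps are honest bijections. The main difficulty to watch is obtaining the constant uniformly for \emph{all} odd $n$, in particular handling composite $n$ through Kneser's theorem in place of Cauchy--Davenport.
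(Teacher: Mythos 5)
Your setup is sound up to the point where you concede the real work remains: the six pairwise product bounds $Q_\wPlayer\le r_\wPlayer c_\wPlayer$, $Q_\wPlayer\le d_\wPlayer a_\wPlayer$, etc.\ (valid on the odd torus precisely because any two line-directions coordinatize the cells) together with the disjointness constraints do cap out at $n^2/4$, exactly as you say. But the step you then propose to close the factor of two --- a robust Cauchy--Davenport/Kneser statement forcing $c_\wPlayer\ge\min(n,\,r_\wPlayer+d_\wPlayer-1)$ whenever $Q_\wPlayer$ is \emph{close} to $r_\wPlayer d_\wPlayer$ --- is never stated precisely, let alone proved. The exact version (when $Q_\wPlayer=r_\wPlayer d_\wPlayer$, so that $R_\wPlayer-D_\wPlayer\subseteq C_\wPlayer$) is easy, but the approximate version for arbitrary near-extremal sets is exactly the hard part, and without a quantitative form of it you cannot even write down the ``refined inequalities'' whose optimum you assert equals $n^2/8$. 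So as it stands the proposal proves only $t(n)\le n^2/4$, and the claimed conclusion rests on two unexecuted steps (the stability lemma and the final optimization).

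It is worth noting that the paper reaches $1/8$ by a different and additive-combinatorics-free route, which explains why your relaxation is too weak: rather than tracking white and black line-counts separately, it fixes one colour $B$, lets $R,C,D,A$ be the unions of the rows, columns, $\Z_n$-diagonals and $\Z_n$-skew-diagonals that $B$ meets, and uses that on the odd torus distinct lines of different types meet in \emph{exactly} one cell, so $\card{X\cap X'}=xx'$ holds as an equality. Feeding all $16$ regions of the Venn diagram of $R,C,D,A$ (which partition the $n^2$ cells) into a nonlinear program with these exact pairwise-intersection constraints, and maximizing $\min\{\card{R\cap C\cap D\cap A},\,n^2-\card{R\cup C\cup D\cup A}\}\ge\min\{|B|,|W|\}$, yields optimum $\tfrac18$. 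The tension your Cauchy--Davenport step was meant to capture is already encoded there: the four unions cannot overlap more than ``randomly'' in pairs, which simultaneously limits the quadruple intersection (where $B$ lives) and the uncovered region (where $W$ lives). If you want to salvage your two-colour formulation, the missing ingredient is this global Venn-diagram consistency, not a sumset inequality.
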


\begin{theorem}[Regular board upper bound] \label{thm:regularboardupperbound}
$a(n) \leq 0.1716n^2$, for all sufficiently large $n$.
    \end{theorem}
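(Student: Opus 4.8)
The constant $0.1716$ is a rounding of $3-2\sqrt{2}=(\sqrt{2}-1)^{2}\approx 0.17157$, so the plan is to prove the sharper asymptotic statement $a(n)\le (3-2\sqrt2+o(1))\,n^{2}$, from which the stated bound follows for all sufficiently large $n$. I would organise everything around the four families of lines of the board --- rows, columns, diagonals ($\lineDia$) and anti-diagonals ($\lineAnt$) --- and the single structural fact that since no white queen attacks a black queen, \emph{no line of any of the four families carries queens of both colours}. After rescaling the board to the unit square, I would pass to the natural continuous relaxation in which the white and black queens become subsets $\mathcal W,\mathcal B\subseteq[0,1]^{2}$ whose (common) area equals the normalised queen count, and the ``no mixed line'' condition becomes the requirement that the projections of $\mathcal W$ and of $\mathcal B$ onto each of the four axes $x$, $y$, $x+y$, $x-y$ have disjoint supports. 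The goal is then to show $\operatorname{area}(\mathcal W)\le 3-2\sqrt2$.

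For each colour two area estimates are available. The region $\mathcal B$ lies inside the product of its $x$- and $y$-projections $X_{\mathcal B},Y_{\mathcal B}$, giving $\operatorname{area}(\mathcal B)\le|X_{\mathcal B}|\,|Y_{\mathcal B}|$; applying the same bounding-box estimate in the coordinates $(x+y,x-y)$, whose change of variables has Jacobian $2$, gives $\operatorname{area}(\mathcal B)\le\tfrac12|U_{\mathcal B}|\,|V_{\mathcal B}|$, where $U_{\mathcal B},V_{\mathcal B}$ are the projections onto the two diagonal axes. On the board these are the counting inequalities $\card{\bQueens}\le\bNbLines{\lineRow}\,\bNbLines{\lineCol}$ and $\card{\bQueens}\le\tfrac12\,\bNbLines{\lineDia}\,\bNbLines{\lineAnt}+o(n^{2})$, where $\bNbLines{\cdot}$ counts the occupied lines of each family. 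Disjointness of the projections supplies the budget constraints $|X_{\mathcal B}|+|X_{\mathcal W}|\le1$, $|Y_{\mathcal B}|+|Y_{\mathcal W}|\le1$, $|U_{\mathcal B}|+|U_{\mathcal W}|\le2$ and $|V_{\mathcal B}|+|V_{\mathcal W}|\le2$. I would stress at once what is \emph{not} enough: feeding only these product bounds and budget bounds into an optimisation merely reproduces the weaker estimate $a(n)\le\tfrac14 n^{2}$ of van Bommel and MacEachern~\cite{BM18}, since the row/column budgets alone are met with equality by two opposite $\tfrac n2\times\tfrac n2$ squares.

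The crux, and the step I expect to be the main obstacle, is therefore to capture the \emph{positional} interaction between the four directions rather than merely the numbers of occupied lines. The point is that two opposite corner squares, while consistent with the row and column budgets, are forced to share almost all of their anti-diagonals (or diagonals): confining a colour to a small axis-aligned box fixes the \emph{locations}, not just the lengths, of its diagonal and anti-diagonal projection intervals, so the disjointness requirements in all four directions cannot hold simultaneously unless each region is cut back below area $\tfrac14$. I would formalise this by tracking, for each colour, the four projection \emph{intervals} together with the containments $\mathcal W\subseteq X_{\mathcal W}\times Y_{\mathcal W}$ and $\mathcal W\subseteq\{x-y\in U_{\mathcal W}\}\cap\{x+y\in V_{\mathcal W}\}$, and the relations $|U_{\mathcal W}|,|V_{\mathcal W}|\le|X_{\mathcal W}|+|Y_{\mathcal W}|$ linking the straight and rotated bounding boxes. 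This turns the problem into a finite, low-dimensional geometric optimisation; its extremal configuration should be octagonal, which is exactly why the constant $(\sqrt2-1)^{2}=3-2\sqrt2$ appears, and solving it should give $\operatorname{area}(\mathcal W)\le 3-2\sqrt2$.

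Finally I would transfer the continuous bound back to the board, where discretisation costs only lower-order terms: the diagonals have unequal lengths and number $2n-1$ rather than $2n$, the parity restriction in the diagonal/anti-diagonal count is exact only up to $O(n)$, and boundary cells contribute $O(n)$, all of which are absorbed into an additive $o(n^{2})$. This yields $a(n)\le(3-2\sqrt2+o(1))n^{2}$, and since $3-2\sqrt2<0.1716$ the error term is eventually dominated, giving $a(n)\le 0.1716\,n^{2}$ for all sufficiently large $n$. The delicate part throughout is the geometric optimisation of the previous paragraph, namely producing a constraint set that is valid for \emph{every} placement yet tight enough to force the octagonal optimum, since any purely length-based relaxation collapses back to the bound $\tfrac14$.
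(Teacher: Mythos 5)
You have correctly identified the target constant $3-2\sqrt2$ and the right general framework: the paper likewise relaxes the problem to a small optimisation over the unions of occupied rows, columns, diagonals and skew-diagonals, and likewise observes that the row/column and diagonal/anti-diagonal product bounds together with the disjointness budgets only reproduce the $\tfrac14 n^2$ bound. However, the step you yourself single out as delicate is exactly where the argument breaks. The relations $|U_{\mathcal W}|,|V_{\mathcal W}|\le |X_{\mathcal W}|+|Y_{\mathcal W}|$ are valid only when the projections are intervals, and nothing forces them to be. In the discrete setting the analogous inequality $d\le r+c$ is simply false: if the occupied row indices and column indices are sets in ``general position'' (say, Sidon sets), a colour occupying $r$ rows and $c$ columns can occupy on the order of $rc$ diagonals. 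In the continuous relaxation the projection of $\mathcal W\subseteq X_{\mathcal W}\times Y_{\mathcal W}$ onto the $x-y$ axis lies in the difference set $X_{\mathcal W}-Y_{\mathcal W}$, whose measure can greatly exceed $|X_{\mathcal W}|+|Y_{\mathcal W}|$ when $X_{\mathcal W},Y_{\mathcal W}$ are unions of many small pieces. Since these interval relations are the only ingredient you add beyond the product and budget bounds, removing them collapses your optimisation back to the $\tfrac14$ bound you are trying to beat; the ``octagonal extremiser'' is asserted rather than derived, and no constraint set that is both valid for every placement and tight enough to force it is actually exhibited.

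The paper's replacement for this missing ingredient is \Cref{lem:crazyconstraint}, which constrains the \emph{unions of occupied lines} rather than projections: if $X$ is the union of the occupied rows (or columns), with $|X|\le\alpha n^2$, and $X'$ is the union of the occupied same-parity diagonals (or skew-diagonals), with $|X'|\le\beta n^2$, then $|X\cap X'|\le \alpha\sqrt\beta\,n^2-\tfrac{\alpha^2}{4}n^2+O(n)$. This is proved by a rearrangement argument exploiting the fact that diagonals of $\G_n$ have different lengths, so a union of diagonals of total size $\beta n^2$ must contain long diagonals, each of which meets any $\alpha n$ rows in only a fraction of its cells; the inequality therefore holds for every placement with no interval or convexity hypothesis. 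Feeding it into the $64$-region Venn-diagram non-linear program and solving with APOPT yields the value $0.171572875\ldots=3-2\sqrt2$. To repair your write-up you would need to prove a positional constraint of this kind (or otherwise justify reducing to interval projections) and then actually carry out the resulting optimisation; as it stands the proposal identifies the right obstacle but does not overcome it.
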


Our upper bounds are obtained by solving a non-linear program which models a carefully chosen Venn diagram.  Roughly speaking, there is a variable for the size of each `region' of the Venn diagram.  We use a slightly different non-linear program for the even torus, odd torus, and regular board.  However, in all three cases, the number of variables is at most $100$, regardless of the board size. This allows us to solve all three non-linear programs exactly using modern optimization packages.  The astute reader will notice that our upper bound for the even torus is significantly better than our upper bounds for the odd torus and the regular board. This is because we encode extra symmetries into our non-linear program which do not exist for the odd torus and the regular board.  

Note that our lower bound for the even torus is much larger than our upper bound for the odd torus.  Therefore, the behaviour of $t(n)$ differs significantly for odd and even $n$.  In particular, we obtain the following corollary. 

\begin{corollary}
$t(2n)-t(2n-1) \geq 0.0356n^2$, for all sufficiently large $n$. 
\end{corollary}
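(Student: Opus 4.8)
The plan is to derive this corollary as an immediate consequence of the even torus lower bound (\Cref{thm:eventoruslowerbound}) and the odd torus upper bound (\Cref{thm:oddtorusupperbound}). The key observation is that $2n$ is even and $2n-1$ is odd, so the two theorems apply to the two terms separately, with no interaction between them. This is genuinely a corollary rather than a theorem requiring new ideas, so I expect the entire argument to be a one-line application of the preceding bounds followed by elementary arithmetic.

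Concretely, first I would invoke \Cref{thm:eventoruslowerbound} with board size $2n$. Since $2n$ is even and is arbitrarily large once $n$ is, the theorem yields
\begin{equation*}
t(2n) \geq 0.1339 (2n)^2 = 0.5356\, n^2
\end{equation*}
for all sufficiently large $n$. Next I would invoke \Cref{thm:oddtorusupperbound} with board size $2n-1$, which is odd, to obtain the upper bound
\begin{equation*}
t(2n-1) \leq 0.125 (2n-1)^2 = 0.5\, n^2 - 0.5\, n + 0.125,
\end{equation*}
valid for every $n$ without any largeness hypothesis.

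Subtracting the second inequality from the first gives
\begin{equation*}
t(2n) - t(2n-1) \geq 0.5356\, n^2 - \bigl(0.5\, n^2 - 0.5\, n + 0.125\bigr) = 0.0356\, n^2 + 0.5\, n - 0.125.
\end{equation*}
Finally I would note that the lower-order correction $0.5\, n - 0.125$ is nonnegative for all $n \geq 1$, so it can simply be discarded, yielding $t(2n) - t(2n-1) \geq 0.0356\, n^2$ for all sufficiently large $n$, as claimed.

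There is essentially no obstacle here: the only point requiring a moment's care is confirming that substituting $2n$ into \Cref{thm:eventoruslowerbound} is legitimate, which it is precisely because $2n$ is even, matching that theorem's parity restriction, and because the ``sufficiently large even'' hypothesis is met whenever $n$ is sufficiently large. The matching constant $0.0356 = 0.5356 - 0.5$ comes out exactly, confirming that the stated bound is the sharp arithmetic consequence of combining the two theorems, with the linear term only strengthening the conclusion.
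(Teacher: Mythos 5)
Your proposal is correct and matches the paper's intended derivation exactly: the paper presents this corollary as an immediate consequence of combining \Cref{thm:eventoruslowerbound} applied to the even board $2n$ with \Cref{thm:oddtorusupperbound} applied to the odd board $2n-1$, and your arithmetic ($0.1339\cdot 4 - 0.125\cdot 4 = 0.0356$, with the lower-order terms only helping) is right.
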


Our bound $a(n) \leq 0.1716n^2$ from~Theorem~\ref{thm:regularboardupperbound} improves on the bound $a(n) \leq 0.25n^2$ proven by van Bommel and MacEachern~\cite{BM18}.  The proof from~\cite{BM18} does not actually use the fact that queens can attack diagonally.  In contrast, our non-linear program crucially exploits this fact.  This is necessary since there is a lower bound of $0.25n^2$ for the `peaceable rooks' problem. In principle, it may be possible to match the lower bound $\approx 0.1458 n^2$ given by the Ainley construction by adding more valid constraints to our non-linear program.

We also describe a simple local search algorithm that appears to converge very rapidly to optimal solutions for both the regular and toroidal boards.  For example, for the regular board, our algorithm quickly finds the Ainley example as well as many of the other examples listed on the OEIS which match the Ainley example.  For the even torus, our algorithm quickly finds the example from~Theorem~\ref{thm:eventoruslowerbound}, as well as other examples which match that bound. For the odd torus, our algorithm actually finds examples with significantly more queens than our construction from~Theorem~\ref{thm:oddtoruslowerbound}.  However, due to their irregularity, we were unable to generalize these examples to a construction that works for all odd $n$. Our experimental data suggests that the Ainley construction is optimal for the regular board, and the construction from~Theorem~\ref{thm:eventoruslowerbound} is (essentially) optimal for the even torus (see~Conjecture~\ref{conj:EvenAlmostOptimal}). We have embedded our algorithm into an interactive "browser app" called \texttt{\textcolor{cMaroon}{Pieceable Queens}}\footnote{We strongly recommend to run Pieceable Queens in Firefox.},  where the interested reader can run it for themselves~\cite{pieceable}.

\textbf{Paper Outline.}  We begin with some notation in~Section~\ref{sec:notation}. In Section~\ref{sec:lower}, we describe our explicit constructions which prove Theorems~\ref{thm:eventoruslowerbound}, and~\ref{thm:oddtoruslowerbound}.  In~Section~\ref{sec:Hal9000}, we define our non-linear programs and prove~Theorem~\ref{thm:eventorusupperbound},~Theorem~\ref{thm:oddtorusupperbound}, and~Theorem~\ref{thm:regularboardupperbound}.  We describe our algorithm in~Section~\ref{sec:algorithm}.  In~Appendix~\ref{sec:appendix} we list the best odd torus solutions found by our algorithm.  

\section{Notation} \label{sec:notation}

For integers $a \leq b$, we let $[a, b]:=\{a, \dots, b\}$ and $[a]=[1,a]$.  We refer to elements of $[n] \times [n]$ as \defn{cells}. We define $X \subseteq [n] \times [n]$ to be
\begin{itemize}
    \item 
    a \defn{row} if $X=\{i\} \times [n]$ for some $i \in [n]$,
    \item 
    a \defn{column} if $X=[n] \times \{j\}$ for some $j \in [n]$,
    \item 
    a \defn{diagonal} if $X=\{(i,j) \mid i-j=k\}$ for some $k \in [-(n-1), n-1]$,
    \item
    a \defn{skew-diagonal} if $X=\{(i,j) \mid i+j=k\}$ for some $k \in [2, 2n]$,
    \item
    a \defn{$\mathbb{Z}_n$-diagonal} if $X=\{(i,j) \mid i-j \equiv k \pmod{n} \}$ for some $k \in \mathbb{Z}_n$, and
    \item 
    a \defn{$\mathbb{Z}_n$-skew-diagonal} if $X=\{(i,j) \mid i+j\equiv k \pmod{n} \}$ for some $k \in \mathbb{Z}_n$.
\end{itemize}

The \defn{$n \times n$ grid}, denoted $\mathbb{G}_n$, is the hypergraph with vertex set $[n] \times [n]$, whose hyperedges are the rows, columns, diagonals, and skew-diagonals of $[n] \times [n]$.  The \defn{$n \times n$ torus}, denoted $\mathbb{T}_n$, is the hypergraph with vertex set $[n] \times [n]$, whose hyperedges are the rows, columns, $\mathbb{Z}_n$-diagonals, and $\mathbb{Z}_n$-skew-diagonals of $[n] \times [n]$. Thus $\G_n$ and $\T_n$ each have four \defn{types} of hyperedges.  

A \defn{peaceful battle on $\mathbb{G}_n$ (respectively, $\mathbb{T}_n$)} is a pair $(B,W)$, where $B$ and $W$ are disjoint subsets of $[n] \times [n]$ such that $e \cap B=\emptyset$ or $e \cap W=\emptyset$ for every hyperedge $e \in E(\mathbb{G}_n)$ (respectively, $E(\mathbb{T}_n)$).  We say that $m \in \mathbb{N}$ is \defn{peaceable on $\mathbb{G}_n$ (respectively, $\mathbb{T}_n$)} if there is a peaceful battle $(B,W)$ on $\mathbb{G}_n$ (respectively, $\mathbb{T}_n$) such that $|B|=|W|=m$.  With this terminology, $a(n)$ is the maximum integer $m$ such that $m$ is peaceable on $\mathbb{G}_n$, and $t(n)$ is the maximum integer $m$ such that $m$ is peaceable on $\mathbb{T}_n$.  Since every hyperedge of $\mathbb{G}_n$ is contained in a  hyperedge of $\mathbb{T}_n$, we clearly have $a(n) \geq t(n)$, for all $n$.  

\section{Lower bounds}\label{sec:lower}
 We begin with our construction for the even torus.  
Let $n \in \mathbb{N}$ and $a,b \in [n]$.  The \defn{$(a,b)$-plaid} is the pair $(B_1 \cup B_2 \cup B_3, W)$, where 
\begin{align*}
B_1 &=\{(i,j) \in [a] \times [b] : \text{$i+j$ is odd}\},  \\
B_2 &=  \{(i,j) \in [a] \times [b+1, 2n] : \text{$i$ is even and $j$ is odd}\}, \\
B_3 &=   \{(i,j) \in [a+1,2n] \times [b] : \text{$i$ is odd and $j$ is even}\},
\end{align*}
and $W$ is the set of all pairs $(i,j) \in [a+1, 2n] \times [b+1, 2n]$ with $i$ and $j$ both even.  
See~Figure~\ref{fig:eventoruslower} for a drawing of the $(8,10)$-plaid on $\T_{32}$.
\begin{lemma}
    For all $n \in \mathbb{N}$ and $a,b \in [n]$, the $(a,b)$-plaid is a peaceful battle on $\mathbb{T}_{2n}$.  
\end{lemma}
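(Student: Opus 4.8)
The plan is to verify the two defining conditions of a peaceful battle directly. First I would confirm that $B=B_1\cup B_2\cup B_3$ and $W$ are disjoint; this is immediate, since $B_1$, $B_2$, $B_3$, and $W$ are supported on the four pairwise-disjoint rectangular regions obtained by splitting $[2n]\times[2n]$ along the lines $i=a$ and $j=b$ (top-left, top-right, bottom-left, bottom-right respectively). The substance is then to show that no hyperedge of $\T_{2n}$ meets both $B$ and $W$, which I would organize by the four hyperedge types.

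The key observation I would extract first is a \emph{parity dichotomy}. Reading off the definitions, every black cell satisfies that $i+j$ is odd: this holds for $B_1$ by definition, and for $B_2$ (with $i$ even, $j$ odd) and $B_3$ (with $i$ odd, $j$ even) it follows since opposite parities sum to an odd number. Equivalently, every black cell has $i-j$ odd. By contrast, every white cell has both $i$ and $j$ even, so $i+j$ and $i-j$ are both even. Since $2n$ is even, congruence modulo $2n$ preserves parity, so an odd integer is never congruent to an even integer modulo $2n$. This single fact disposes of both diagonal types at once: a $\Z_{2n}$-skew-diagonal is a level set of $i+j \bmod 2n$ and a $\Z_{2n}$-diagonal is a level set of $i-j \bmod 2n$, and in each case black cells (odd value) and white cells (even value) occupy disjoint residues. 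Hence no such hyperedge meets both colors, and the wraparound of the torus is harmless because the argument is purely modular.

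For rows and columns I would argue using the region split. For a fixed row $i$: if $i\le a$ the row contains no white cell (white cells require $i\ge a+1$), so there is nothing to check; if $i\ge a+1$ the only black cells available are from $B_3$, which require $i$ odd, whereas the white cells of $W$ require $i$ even, so a single fixed $i$ cannot carry both colors. The column case is symmetric: for $j\le b$ there is no white cell, and for $j\ge b+1$ the black cells come only from $B_2$ (requiring $j$ odd) while white cells require $j$ even, again excluding coexistence. Combining all cases shows each hyperedge avoids at least one color, which is exactly the peaceful-battle condition.

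The main obstacle, such as it is, is not any single calculation but bookkeeping: correctly tracking which of $B_1,B_2,B_3$ can actually appear in a given row or column once the region is fixed, and checking that the boundary rows/columns ($i=a$, $j=b$) cause no trouble. The parity constraints in the construction were evidently chosen so that these boundary and wraparound effects are absorbed by the global parity dichotomy, leaving only a clean odd/even separation in the two outer strips. I would also note that the restriction $a,b\in[n]$ plays no role in peacefulness (it is presumably reserved for the subsequent size estimate), so the argument in fact goes through for any split point.
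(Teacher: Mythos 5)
Your proof is correct and follows essentially the same route as the paper's: the diagonal and skew-diagonal cases are handled by exactly the parity dichotomy the paper calls ``parity consistency'' ($i+j$ odd on $B$, even on $W$, preserved modulo the even number $2n$), and the row/column case is the same region-plus-parity observation that the paper dismisses as clear. Your write-up merely spells out the row/column bookkeeping and the disjointness of $B$ and $W$ in more detail; no substantive difference.
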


\begin{proof}
  Let $(B,W)$ be the $(a,b)$-plaid and let $e \in E(\T_{2n})$.  It is clear that $e \cap B = \emptyset$ or $e \cap W=\emptyset$ if $e$ is a row or column.  It remains to consider the case that $e$ is a $\Z_{2n}$-diagonal or a $\Z_{2n}$-skew-diagonal.  Say that $X \subseteq V(\T_{2n})$ is \defn{parity consistent} if $i+j \equiv i'+j' \pmod{2}$ for all $(i,j), (i',j') \in X$. Note that all $\Z_{2n}$-diagonals and $\Z_{2n}$-skew-diagonals are parity consistent.  Since $i+j$ is odd for all $(i,j) \in B$ and $i+j$ is even for all $(i,j) \in W$, it follows that $e \cap B = \emptyset$ or $e \cap W=\emptyset$ if $e$ is a $\Z_{2n}$-diagonal or a $\Z_{2n}$-skew-diagonal. 
\end{proof}

The key property of $\T_{2n}$ used in the above proof is that all $\Z_{2n}$-diagonals and $\Z_{2n}$-skew-diagonals are parity consistent.  Note that this property does not hold for the odd torus, and so a different construction is required.  The next lemma shows how to choose an $(a,b)$-plaid $(B,W)$ so that both $B$ and $W$ are large.  

\begin{lemma}\label{lem:Even_LowerBound}
    Let $c := 2-\sqrt{3}$, $n \in \N$ be even, $a:=\floor{cn}$, and $b:=\ceil{cn}$. Let $(B,W)$ be the $(a,b)$-plaid on $\T_n$.  Then $\min\{|B|,|W|\} \geq \frac{c}{2} n^2 - O(n)$. 
\end{lemma}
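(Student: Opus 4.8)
The plan is to compute the four cardinalities $|B_1|$, $|B_2|$, $|B_3|$, and $|W|$ by counting cells of a prescribed parity in a rectangle, and then to verify that the choice $c = 2-\sqrt{3}$ makes $|B| = |B_1|+|B_2|+|B_3|$ and $|W|$ asymptotically equal. Throughout I would read the plaid on $\T_n$ by replacing every ``$2n$'' in its definition by $n$, and use $a = \floor{cn} = cn + O(1)$ and $b = \ceil{cn} = cn + O(1)$. The one counting fact I need is that any integer interval $I$ contains $\tfrac12|I| + O(1)$ even elements and $\tfrac12|I| + O(1)$ odd elements. For $B_1$, summing over the $a$ rows (each contributing $\tfrac{b}{2}+O(1)$ cells with $i+j$ odd) gives $|B_1| = \tfrac{ab}{2}+O(a)$; for $B_2,B_3,W$ each cardinality is a product of two parity-counts, each of the form $\tfrac12(\text{side length})+O(1)$, and since all side lengths $a,b,n-a,n-b$ are $\Theta(n)$, perturbing either factor by $O(1)$ changes the product by $O(n)$. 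This yields
\[
|B_1| = \tfrac{ab}{2}+O(n),\quad |B_2| = \tfrac{a(n-b)}{4}+O(n),\quad |B_3| = \tfrac{(n-a)b}{4}+O(n),\quad |W| = \tfrac{(n-a)(n-b)}{4}+O(n).
\]

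Next I would substitute $a = b = cn + O(1)$ and collect terms, absorbing every lower-order contribution into a single $O(n)$. For $|B|$ this gives
\[
|B| = \frac{c^2n^2}{2} + \frac{c(1-c)n^2}{4} + \frac{(1-c)c\,n^2}{4} + O(n) = \frac{n^2}{2}\bigl(c^2 + c(1-c)\bigr) + O(n) = \frac{c}{2}n^2 + O(n),
\]
using $c^2 + c(1-c) = c$. The same substitution gives $|W| = \tfrac{(1-c)^2}{4}n^2 + O(n)$.

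The crux is then a one-line identity: $c = 2-\sqrt{3}$ is exactly the root in $(0,1)$ of $c^2 - 4c + 1 = 0$, equivalently of $2c = (1-c)^2$, so $\tfrac{(1-c)^2}{4} = \tfrac{c}{2}$ and hence $|W| = \tfrac{c}{2}n^2 + O(n)$ as well. Therefore both $|B|$ and $|W|$ equal $\tfrac{c}{2}n^2 + O(n)$, giving $\min\{|B|,|W|\} \geq \tfrac{c}{2}n^2 - O(n)$. I do not anticipate a genuine obstacle: the argument is bookkeeping of parity counts, and the only real idea—balancing $|B|$ against $|W|$—is forced by the quadratic $c^2-4c+1=0$, whose relevant root is precisely $c = 2-\sqrt{3}$. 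The single point needing mild care is that replacing $a$ and $b$ (which satisfy $b \in \{a, a+1\}$) by the common value $cn$ costs only $O(n)$, which is immediate since each cardinality depends on $a,b$ through products of quantities linear in $n$.
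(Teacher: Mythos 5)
Your proposal is correct and follows essentially the same route as the paper: count each of $|B_1|,|B_2|,|B_3|,|W|$ by parity in the relevant rectangles, substitute $a,b = cn+O(1)$, and observe that $c=2-\sqrt{3}$ makes $\frac{c}{2} = \frac{(1-c)^2}{4}$ so that $|B|$ and $|W|$ balance. Your explicit identification of the quadratic $c^2-4c+1=0$ and the more careful tracking of the $O(n)$ error terms are just slightly more detailed versions of what the paper does.
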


\begin{proof}
Observe that
\begin{align*}
|B| &\approx \frac{1}{2} ab + \frac{1}{4}a(n-b)+\frac{1}{4}b(n-a) \\
    &\approx \frac{c^2n^2}{2}+\frac{c(1-c)n^2}{2} \\
    &=\frac{c}{2} n^2.
\end{align*}
On the other hand,
\[
|W| \approx \frac{1}{4} (n-a)(n-b) \approx \frac{(1-c)^2}{4}n^2=\frac{c}{2} n^2,
\]
where the last equality is the only place in the proof where we use the explicit value of $c$.  Moreover, 
the above computations clearly hold up to an $O(n)$ error. Thus, $\min\{|B|,|W|\} \geq \frac{c}{2} n^2 - O(n)$, as required.
\end{proof}

We are now ready to prove~Theorem~\ref{thm:eventoruslowerbound}.

\begin{proof}
Let $c':=\frac{2-\sqrt{3}}{2}$.  By~Lemma~\ref{lem:Even_LowerBound}, for any $\epsilon > 0$ there exists a sufficiently large $N$ such that for every even $n \geq N$ there is an $(a,b)$-plaid $(B,W)$ on $\T_n$ with $\min\{|B|,|W|\} \geq (c'-\epsilon) n^2$.  Since $c' > 0.1339$, we are done.  
\end{proof}

We now turn our attention to the odd torus.  Let $n \in \N$ be odd.  Let $W$ be the set of pairs $(i,j) \in [n] \times [n]$ such that
\begin{itemize}
    \item $i$ and $j$ are both odd,
    \item $-\lfloor \frac{n}{3} \rfloor \leq i-j \leq \lfloor \frac{n}{3} \rfloor-1$, and
    \item $i+j \leq \lfloor \frac{2n}{3} \rfloor $ or $i+j \geq \lceil \frac{4n}{3} \rceil +1$.
\end{itemize}
Similarly, let $B$ be the set of pairs $(i,j) \in [n] \times [n]$ such that
\begin{itemize}
    \item $i$ and $j$ are both even,
    \item $\lceil \frac{2n}{3}+1 \rceil \leq i+j \leq \lceil \frac{4n}{3} \rceil$, and
    \item $i-j \geq -\lfloor \frac{n}{3} \rfloor+1$ or $i-j \geq \lfloor \frac{n}{3} \rfloor$.
\end{itemize}

We define the \defn{$n$-argyle} to be the pair $(B,W)$.  See~Figure~\ref{fig:Odd_Lower_ProofIdea} for a picture of the $31$-argyle.

\begin{lemma}
    For all odd $n$, the $n$-argyle is a peaceful battle on $\T_n$.  
\end{lemma}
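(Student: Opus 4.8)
The plan is to verify directly that no hyperedge of $\T_n$ meets both $B$ and $W$, handling the four hyperedge types of $\T_n$ in turn. Rows and columns are immediate from parity: every cell of $W$ has both coordinates odd while every cell of $B$ has both coordinates even, so a row $\{i\}\times[n]$ (resp.\ a column $[n]\times\{j\}$) can meet $W$ only when $i$ (resp.\ $j$) is odd and can meet $B$ only when $i$ (resp.\ $j$) is even. The whole content of the lemma is therefore in the two diagonal types, where the toroidal wrap-around is the real difficulty.

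For these I would first record the parity observation that for every cell of $B\cup W$ both $i+j$ and $i-j$ are even, since $i,j$ are both odd on $W$ and both even on $B$; this is exactly what defeats the wrap-around. A $\Z_n$-skew-diagonal is a level set of $i+j \bmod n$ and a $\Z_n$-diagonal is a level set of $i-j \bmod n$, so I must rule out a white cell $(i,j)$ and a black cell $(i',j')$ with $i+j\equiv i'+j' \pmod n$, and likewise with $i-j\equiv i'-j'$. The key step is that, since $i+j$ and $i'+j'$ are both even while $n$ is odd, any integer multiple of $n$ equal to their difference must be an \emph{even} multiple of $n$; as $i+j,\,i'+j'\in[2,2n]$ their difference lies in $(-2n,2n)$, so that even multiple is $0$. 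Hence $i+j\equiv i'+j'\pmod n$ forces $i+j=i'+j'$ \emph{as integers}, and the identical argument (using $i-j,\,i'-j'\in[-(n-1),n-1]$) shows $i-j\equiv i'-j' \pmod n$ forces $i-j=i'-j'$. Thus the torus question collapses to the corresponding question on the plane.

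It then remains to check two disjointness statements about integers. For skew-diagonals, the definitions place $i+j$ for white in the two corner ranges $i+j\leq\floor{2n/3}$ or $i+j\geq\ceil{4n/3}+1$, whereas $i+j$ for black lies in the central band $\ceil{2n/3}+1\leq i+j\leq\ceil{4n/3}$; these three integer ranges are pairwise disjoint, so no white and black cell share a value of $i+j$. Symmetrically, for diagonals the white cells occupy the central band $-\floor{n/3}\leq i-j\leq\floor{n/3}-1$ while the black cells occupy the complementary corner ranges in $i-j$, which are again disjoint integer ranges. Combined with the reduction of the previous paragraph, this shows that no $\Z_n$-diagonal or $\Z_n$-skew-diagonal meets both colours, completing the verification.

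The main obstacle I anticipate is not conceptual but the floor/ceiling bookkeeping in the last paragraph: one must check that the ``or'' thresholds in the definitions of $B$ and $W$ are offset by exactly the right amount (allowing for the value of $n \bmod 3$) so that the white band and the black corners are genuinely disjoint as sets of integers, with no shared boundary value, and likewise for the transposed pair. I would organize this by writing $p:=\floor{n/3}$ and comparing endpoints case-by-case; I would also double-check the precise inequalities defining $B$, since the diagonal condition for $B$ must be the integer complement of the white band $[-\floor{n/3},\,\floor{n/3}-1]$, i.e.\ $i-j\leq -\floor{n/3}-1$ or $i-j\geq\floor{n/3}$. Once the endpoints are confirmed disjoint, the parity-and-oddness reduction does all the remaining work, so no genuinely hard estimate is required.
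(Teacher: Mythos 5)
Your proof is correct and follows the same outer case split as the paper (rows and columns by coordinate parity, then the two diagonal types), but it handles the only delicate point --- the toroidal wrap-around --- by a genuinely different and more careful mechanism. The paper fixes a representative $k \in [-\lfloor n/3\rfloor,\, n-\lfloor n/3\rfloor-1]$ for each $\Z_n$-diagonal and asserts outright that $e\cap B=\emptyset$ when $k$ lies in the white band $[-\lfloor n/3\rfloor,\lfloor n/3\rfloor-1]$ and $e\cap W=\emptyset$ otherwise; taken literally this dichotomy needs repair, since a black cell with $i-j\in[n-\lfloor n/3\rfloor,\,n-1]$ reduces into the white band of representatives (for $n=31$ the black cell $(28,2)$ lies on the diagonal with $k=-5$). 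What rescues the conclusion is exactly the observation you make explicit: $i\pm j$ is even on every cell of $B\cup W$, $n$ is odd, and the relevant differences lie in an interval of length less than $2n$, so a congruence mod $n$ between a black and a white cell forces equality of integers, which the disjoint integer ranges in the definitions forbid. Your reduction of the torus question to the planar one therefore supplies the parity step that the paper's argument leaves implicit in this lemma (it is spelled out only in the even-torus proof), and in that sense your write-up is the tighter of the two. You are also right to flag the displayed diagonal condition on $B$ ("$i-j\geq -\lfloor n/3\rfloor+1$ or $i-j\geq\lfloor n/3\rfloor$") as a typo for the complement of the white band: as written it would place black and white queens on the main diagonal, so your corrected reading is the one consistent with the figure and with the counting lemma that follows.
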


\begin{proof}
Let $(B,W)$ be the $n$-argyle and let $e \in E(\T_{n})$.  It is clear that $e \cap B = \emptyset$ or $e \cap W=\emptyset$ if $e$ is a row or column.  It remains to consider the case that $e$ is a $\Z_{n}$-diagonal or a $\Z_{n}$-skew-diagonal.  Suppose $e$ is a $\Z_{n}$-diagonal.  Then there exists $k \in \Z_n$ such that $i-j \equiv k \pmod{n}$ for all $(i,j) \in e$.  We may assume that $k \in [-\lfloor \frac{n}{3} \rfloor, n-\lfloor \frac{n}{3} \rfloor-1]$.  If 
$k \in [-\lfloor \frac{n}{3} \rfloor, \lfloor \frac{n}{3} \rfloor-1]$, then $e \cap B=\emptyset$.  On the other hand, if $k \in [\lfloor \frac{n}{3} \rfloor, n-\lfloor \frac{n}{3} \rfloor-1]$, then $e \cap W=\emptyset$.  The case that $e$ is a $\Z_{n}$-skew-diagonal is analogous.  
\end{proof}

Since $\frac{1}{12}> 0.0833$, the following lemma immediately implies~Theorem~\ref{thm:oddtoruslowerbound}.

\begin{lemma}
Let $n \in \N$ be odd and let $(B,W)$ be the $n$-argyle.  Then $\min\{|B|,|W|\} \geq \frac{1}{12}n^2 - O(n)$.
\end{lemma}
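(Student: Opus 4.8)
The plan is to count the two corner-shaped regions defining $W$ and $B$ directly, converting the bounding inequalities into a pair of elementary area computations and then accounting for the parity restrictions by a lattice-point density estimate.

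First I would pass to the diagonal coordinates $u=i-j$ and $v=i+j$, under which $[n]\times[n]$ becomes, up to a boundary of area $O(n)$, the diamond $\{(u,v): |u|\le v\le 2n-|u|\}$, and under which the area element satisfies $di\,dj=\tfrac12\,du\,dv$. In these coordinates the (parity-free) region defining $W$ is $|u|\le \tfrac{n}{3}$ together with $v\le \tfrac{2n}{3}$ or $v\ge \tfrac{4n}{3}$, i.e.\ two congruent caps, one near $v=0$ and one near $v=2n$. Splitting each cap at $v=\tfrac{n}{3}$ into a triangle ($|u|\le v$) and a rectangle ($|u|\le \tfrac{n}{3}$) gives $(u,v)$-area $\int_0^{n/3}2v\,dv+\int_{n/3}^{2n/3}\tfrac{2n}{3}\,dv=\tfrac{n^2}{3}$ per cap, hence $(i,j)$-area $\tfrac{n^2}{6}$ per cap and $\tfrac{n^2}{3}$ for the whole $W$-region. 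The region defining $B$ is the perpendicular configuration: the central band $\tfrac{2n}{3}\le v\le \tfrac{4n}{3}$ intersected with $|u|\ge \tfrac{n}{3}$, which is again two caps (now near $u=\pm n$); an identical split gives total $(i,j)$-area $\tfrac{n^2}{3}$.

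Next I would install the parity constraints. The cells with $i,j$ both odd form a coset of $(2\Z)^2$ inside $\Z^2$, a sublattice of covolume $4$, and likewise for $i,j$ both even. Since each defining region is a union of two convex pieces of perimeter $O(n)$, the standard estimate that a convex set of area $A$ contains $A/4+O(\text{perimeter})$ points of such a coset yields $|W|=\tfrac14\cdot\tfrac{n^2}{3}+O(n)=\tfrac{n^2}{12}+O(n)$ and, verbatim, $|B|=\tfrac{n^2}{12}+O(n)$. Replacing the idealized cutoffs $\tfrac{n}{3},\tfrac{2n}{3},\tfrac{4n}{3}$ by the floors and ceilings in the statement moves each bounding line by $O(1)$, changing each count by only $O(n)$, so these are absorbed. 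Combining the two estimates gives $\min\{|B|,|W|\}\ge \tfrac{n^2}{12}-O(n)$.

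The two area integrals are routine; the one step needing genuine care is the parity bookkeeping, and I expect it to be the main obstacle. Because $n$ is odd, on any fixed anti-diagonal $v=i+j$ the eligible cells (both coordinates of a fixed parity) occur in an arithmetic progression of step $4$ in $u$, so the heuristic ``area times $\tfrac14$'' must be justified rather than assumed, and one must rule out the possibility that the interaction between the parity coset and the floor/ceiling cutoffs silently gains or loses a band of $\Theta(n)$ cells. The cleanest safeguard is to replace the continuous estimate by the explicit slicing computation—summing the length of this progression over the $O(n)$ relevant anti-diagonals—which makes every discrepancy manifestly $O(n)$ and delivers the leading constant $\tfrac{1}{12}$ exactly.
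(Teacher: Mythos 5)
Your proposal is correct and follows essentially the same route as the paper: both arguments reduce to showing that each colour occupies a region of area $\tfrac{1}{3}n^2+O(n)$ on which the parity constraint thins the count by a factor of $\tfrac14$, giving $\tfrac{1}{12}n^2-O(n)$. The only difference is bookkeeping --- the paper obtains the area by partitioning the board into $36$ congruent triangles of which each colour meets $12$, whereas you integrate directly in the coordinates $u=i-j$, $v=i+j$; your handling of the floor/ceiling cutoffs and of the covolume-$4$ parity coset is, if anything, more explicit than the paper's.
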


\begin{proof}
We cut a square $S$ of side length $n$ into a set of 36 congruent triangles $T_1, \dots, T_{36}$ as follows.  First cut $S$ into 9 congruent squares, and then cut each of the smaller 9 squares into four congruent triangles by cutting along their two diagonals.  Next, we regard each cell $(i,j) \in [n] \times [n]$ as a $1 \times 1$ square contained in $S$.  

\begin{figure}[ht]
    \centering
    \includegraphics[width=2.2in]{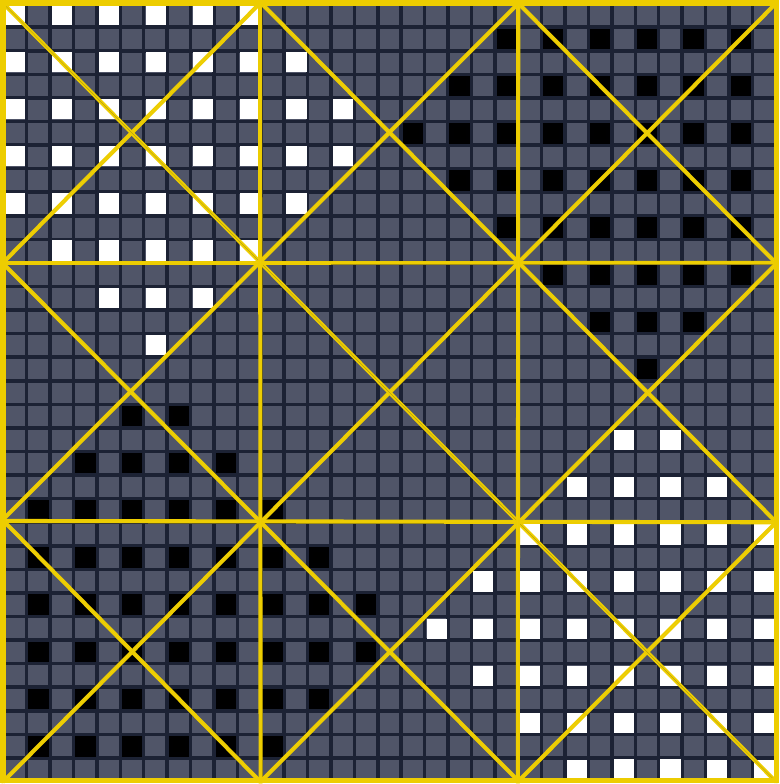}
    \caption{Partition of $[n] \times [n]$ into $36$ sets.}
    \label{fig:36triangles}
\end{figure}

See Figure~\ref{fig:36triangles} for a picture of the $33$-argyle embedded inside a square of side length $33$. For each cell $x$, we choose $f(x) \in [36]$ such that $x \cap T_{f(x)} \neq \emptyset$.  Note that the choice of $f(x)$ is unique for all but at most $10n$ cells of $[n] \times [n]$.  
Let $T_1', \dots, T_{36}'$ be the partition of $[n] \times [n]$, where $x \in T_{f(x)}'$ for all $x \in [n] \times [n] $.  
From Figure~\ref{fig:36triangles}, it is clear that up to an $O(n)$ error, $B$ intersects 12 of $T_1', \dots, T_{36}'$ with density $\frac{1}{4}$ and 24 of $T_1', \dots, T_{36}'$ with density 0. The same holds for $W$.  
Thus, up to an $O(n)$ error we have 
$
\min \{|B|, |W|\} \approx \frac{12}{36} \cdot \frac{1}{4} n^2 = \frac{1}{12} n^2.
$
\end{proof}

\section{Upper bounds} \label{sec:Hal9000}

In this section, we prove upper bounds on $a(n)$ and $t(n)$ via non-linear programming.  Our implementation of all three non-linear programs is available online~\cite{implementation}.  We begin with the odd torus.  
\subsection{Odd torus}
    Let $n \in \N$ be odd and let $(B,W)$ be a largest peaceful battle on $\T_n$ with $|B|=|W|$.   Suppose that $B$ intersects exactly $r$ rows, $c$ columns, $d$ $\Z_n$-diagonals and $s$ $\Z_n$-skew-diagonals of $\T_n$. Let $R,C,D,S \subseteq V(\T_n)$ be the union of these rows, columns, $\Z_n$-diagonals, and $\Z_n$-skew-diagonals, respectively.   
    Thus, $|R|=r n$, $|C|=c n$, $|D|=d n$ and $|S|=s n$. 
    Since $B$ is contained in $R \cap C \cap D \cap S$ and $W$ is disjoint from $R \cap C \cap D \cap S$, we have $|R\cap C\cap D\cap S| \geq |B|$ and $|\overline{R\cup C\cup D\cup S}| \geq |W|$.

    Observe that each hyperedge of $\T_n$ has $n$ vertices and if $e$ and $e'$ are distinct hyperedges of the same type, then $e \cap e'=\emptyset$.  On the other hand, if $e$ and $e'$ are of different types, then $|e \cap e'|=1$.  Thus, for all distinct $(x,X), (x',X')\in\{(r,R),(c,C),(d,D),(s,S)\} $ we have $|X\cap X'| = xx'$.

    We now introduce variables to encode the above equations.  
    Let $\mathcal{F}$ be the set of 16 regions of the Venn diagram of $R,C,D$, and $S$.   For each $F \in \mathcal{F}$, we let $z_F:=\frac{|F|}{n^2}\in[0,1]$.   
     See Figure~\ref{fig:venn} for a depiction\footnote{Note that some $z_F$ variables are missing from Figure~\ref{fig:venn} since it is impossible to draw 16 non-empty regions if we use disks to represent the four sets.} of some of the variables $z_F$.

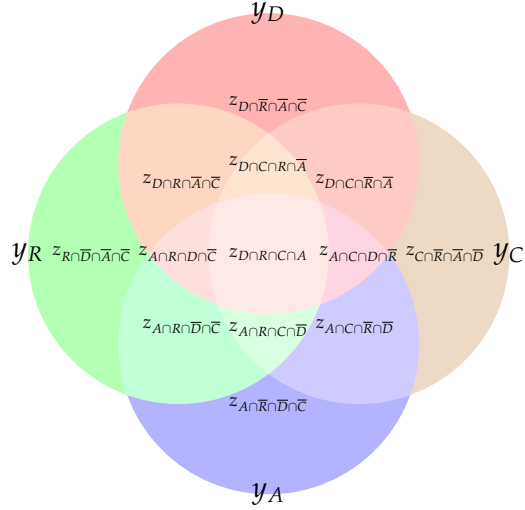
\begin{figure}[H]\label{fig:venn}
\subfloat{
\begin{tikzpicture}
  \begin{scope}[blend group = soft light]
    \fill[red!30!white]   ( 90:1.2) circle (2);
    \fill[green!30!white] (180:1.2) circle (2);
    \fill[blue!30!white]  (270:1.2) circle (2);
    \fill[brown!30!white]  (360:1.2) circle (2);
  \end{scope}
  \node at ( 90:3.2)    {$y_{D}$};
  \node at ( 180:3.2)    {$y_R$};
  \node at ( 270:3.2)    {$y_S$};
  \node at ( 360:3.2)    {$y_C$};
  \node at ( 90:2)    {\tiny{$z_{D \cap \overline{R} \cap \overline{S} \cap \overline{C}}$}};
  \node at ( 140:1.5)    {\tiny{$z_{D\cap R \cap \overline{S} \cap \overline{C}}$}};
  \node at ( 220:-1.5)    {\tiny{$z_{D\cap C \cap \overline{R} \cap \overline{S}}$}};
  \node at ( 270:-1.2)    {\tiny{$z_{D\cap C \cap R \cap \overline{S}}$}};
  \node at ( 270:2)   {\tiny{$z_{S \cap \overline{R} \cap \overline{D} \cap \overline{C}}$}};
  \node at ( 270:1)   {\tiny{$z_{S \cap R \cap C \cap \overline{D}}$}};
  \node at ( 320:1.5)   {\tiny{$z_{S \cap C \cap \overline{R} \cap \overline{D}}$}};
  \node at ( 220:1.5)   {\tiny{$z_{S \cap R \cap \overline{D} \cap \overline{C}}$}};
  \node at ( 360:1.2)   {\tiny{$z_{S \cap C \cap D \cap \overline{R}}$}};
  \node at ( 360:-1.2)   {\tiny{$z_{S \cap R\cap D \cap \overline{C}}$}};
  \node at ( 180:2.35)   {\tiny{$z_{R \cap \overline{D} \cap \overline{S} \cap \overline{C}}$}};
  \node at ( 360:2.35)   {\tiny{$z_{C \cap \overline{R} \cap \overline{S} \cap \overline{D}}$}};
  \node {\tiny{$z_{D \cap R \cap C \cap S}$}};
\end{tikzpicture}}
\caption{Each $y_X$ variable represents the fraction of vertices of $\T_n$ in $X$. Each $z_F$ variable represents the fraction of vertices of $\T_n$ contained in that region of the Venn Diagram.}
\end{figure}
    
    By the above equations, it suffices to solve the following optimization problem.

\begin{flalign*}
 &\textrm{maximize:} \min \{z_{R \cap C \cap D \cap S}, z_{\overline{R \cup C \cup D \cup S}}\} \\
 &\textrm{such that} \\
 &\sum_{F \in \mathcal{F}} z_F = 1 \\
 &y_{X} = \sum_{\substack{F \in \mathcal{F} \\ F \subseteq X}} z_F, \,  \text{for all $X \in \{R,C,D,S\}$} \\
  &y_{X \cap X'} = \sum_{\substack{F \in \mathcal{F} \\ F \subseteq X \cap X'}} z_F, \,  \text{for all $X,X' \in \{R,C,D,S\}$}\\
 & y_{X \cap X'} = y_Xy_{X'}, \,  \text{for all distinct $X,X' \in \{R,C,D,S\}$}\\
 &z_F  \in [0,1], \,  \text{for all $F \in \mathcal{F}$} 
\end{flalign*}

We ran interior point solver IPOPT, which used the exact Hessian method to find an optimal objective value of $\frac{1}{8}$ in 17 iterations~\cite{implementation}. It encodes the above model using the excellent Gekko optimization package~\cite{beal2018gekko}.  This proves~Theorem~\ref{thm:oddtorusupperbound}.

\subsection{Even torus}
 Let $n \in \N$ be even and let $(B,W)$ be a largest peaceful battle on $\T_n$ with $|B|=|W|$.  Our model is similar to the odd torus, except that on the even torus $\T_n$, there are
diagonals and skew-diagonals which do not intersect.  To deal with this, we refine the types of hyperedges as follows.  A $\Z_n$-diagonal $e$ is \defn{odd} (respectively, \defn{even}) if for all $(i,j) \in e$, $i-j$ is even (respectively, odd).  Similarly, a $\Z_n$-skew-diagonal $e$ is \defn{odd} (respectively, \defn{even}) if for all $(i,j) \in e$, $i+j$ is even (respectively, odd). Since all $Z_n$-diagonals are parity consistent, every $\Z_n$-diagonal is either even or odd.  The same holds for $\Z_n$-skew-diagonals.  

We now proceed as in the odd torus, except that there are more variables.  To be precise,  suppose that $B$ intersects exactly $r$ rows, $c$ columns, $d_0$ even $\Z_n$-diagonals, $d_1$ odd $\Z_n$-diagonals, $s_0$ even $\Z_n$-skew-diagonals, and $s_1$ odd $\Z_n$-skew-diagonals of $\T_n$. Let $R,C,D_0, D_1, S_0, S_1 \subseteq V(\T_n)$ be the union of these rows, columns, even $\Z_n$-diagonals, odd $\Z_n$-diagonals, even $\Z_n$-skew-diagonals, and odd $\Z_n$-skew-diagonals, respectively.  Thus, $|R|=r n$, $|C|=c n$, $|D_0|=d_0 n$, $|D_1|=d_1n$, $|S_0|=s_0 n$, and $|S_1|=s_1n$. Observe that

    \[
    |R\cap C\cap D_0\cap S_0|+ |R\cap C\cap D_1\cap S_1| \geq |B|,
    \]
    and
    \[
    |\overline{R\cup C\cup D_0 \cup D_1 \cup S_0 \cup S_1}| \geq |W|.
    \]

Let $e \in E(\T_n)$ and $e' \in E(\T_n)$.  If $e$ is a row and $e'$ is a column, then $|e \cap e'|=1$.  If $e$ is an even (respectively, odd) $\Z_n$-diagonal and $e'$ is an even (respectively, odd) $\Z_n$-skew-diagonal, then $|e \cap e'|=2$.  If $e$ is a row or column and $e'$ is a $\Z_n$-diagonal or a $\Z_n$-skew-diagonal, then $|e \cap e'|=1$.  Finally, if $e$ is an odd $\Z_n$-diagonal (respectively, odd $\Z_n$-skew-diagonal) and $e'$ is an even $\Z_n$-diagonal (respectively, even $\Z_n$-skew-diagonal), then $|e \cap e'|=0$.  Thus, $|R\cap C| = rc $, 
        $|D_0 \cap S_0|=2d_0s_0$, 
        $|D_1 \cap S_1|=2d_1s_1$; and $|X\cap X'| = 0$ if $(x,X) \in\{D_0,S_0\}$ and $X'\in \{D_1, S_1\}$; 
        and  $|X\cap X'| = xx'$ if $(x,X) \in\{(r,R),(c,C)\}$ and $(x',X') \in \{(d_0,D_0), (d_1, D_1), (s_0,S_0), (s_1, S_1)\}$.
       
 We now introduce variables to encode the above equations.  
    Let $\mathcal{F}$ be the set of 64 regions of the Venn diagram of $R,C,D_0, D_1, S_0$, and $S_1$.  For each $F \in \mathcal{F}$, we let $z_F:=\frac{|F|}{n^2}\in[0,1]$.  
    Let $\CartProd:=\{R,C\}, \times:=\{D_0, D_1, S_0, S_1\}$, and $\boxtimes:= \CartProd \cup \times$.  Let $F_1:=\overline{R \cup C \cup D \cup A}$, $F_2:=R \cap C \cap D_0 \cap \overline{D_1} \cap S_0 \cap \overline{S_1}$, and $F_3:=R \cap C \cap \overline{D_0} \cap D_1 \cap \overline{S_0} \cap S_1$. By the above equations, it suffices to solve the following optimization problem.

\begin{flalign*}
&\textrm{maximize:} \min \{z_{F_1}, z_{F_2} +  z_{F_3}\} \\
&\textrm{such that} \\
&\sum_{F \in \mathcal{F}} z_F = 1 \\
&y_{X} := \sum_{\substack{F \in \mathcal{F} \\ F \subseteq X}} z_F, \text{ for all $X \in \boxtimes$} \\
&y_{X \cap X'} := \sum_{\substack{F \in \mathcal{F} \\ F \subseteq X \cap X'}} z_F, \text{ for all  $X,X' \in \boxtimes$} \\
 &y_{R \cap C} = y_{R} y_C \\
 &y_{D_0 \cap S_0} = 2y_{D_0} y_{S_0} \\
  &y_{D_1 \cap S_1} = 2y_{D_1} y_{S_1} \\
    &y_{X \cap X'} = y_{X'} y_X, \text{ if $X \in \CartProd$ and $X' \in \times$}\\
    &y_{X \cap X'} = 0, \text{ if $X \in \{D_0,S_0\}$ and $X' \in \{D_1, S_1\}$}\\
 &y_X \in [0,\frac{1}{2}], \text{ for all $X \in \times$}  \\
 &z_F \in [0,1], \text{ for all $F \in \mathcal{F}$} \\
\end{flalign*}
Again, we ran interior point solver IPOPT via Gekko to find an optimal objective value of \texttt{0.140132093206979} in 100 iterations~\cite{implementation}. This proves~Theorem~\ref{thm:eventorusupperbound}.

\subsection{Regular Board}
Let $(B,W)$ be a largest peaceful battle on $\G_n$ with $|B|=|W|$.  We proceed as in the even torus, except with a minor adjustment since diagonals can have different lengths.  As in the even torus, we partition the set of diagonals into \defn{odd} or \defn{even} diagonals, and the set of skew-diagonals into \defn{odd} or \defn{even} skew-diagonals.  Let $R,C,D_0, D_1, S_0, S_1 \subseteq V(\G_n)$ be the union of these rows, columns, even diagonals, odd diagonals, even skew-diagonals, and odd skew-diagonals, respectively.  Let $\CartProd:=\{R,C\}, \times:=\{D_0, D_1, S_0, S_1\}$, and $\boxtimes:= \CartProd \cup \times$.  

    Let $\mathcal{F}$ be the set of 64 regions of the Venn diagram of $R,C,D_0, D_1, S_0$, and $S_1$.  For each $F \in \mathcal{F}$, we let $z_F:=\frac{|F|}{n^2}\in[0,1]$.   
Again we have
  \[
    |R\cap C\cap D_0\cap S_0|+ |R\cap C\cap D_1\cap S_1| \geq |B|,
    \]
    and
    \[
    |\overline{R\cup C\cup D_0 \cup D_1 \cup S_0 \cup S_1}| \geq |W|.
    \]

The next lemma provides the key constraint of our non-linear program for the regular board $\G_n$.

\begin{lemma} \label{lem:crazyconstraint}
Let $\alpha, \beta \in [0,1]$ satisfy $\alpha \leq 2 \sqrt{\beta}$.   
If $X \in \CartProd$, $X' \in \times$, $|X| \leq \alpha n^2$, and $|X'| \leq \beta n^2$, then  $|X \cap X'| \leq \alpha \sqrt{\beta}n^2-\frac{\alpha^2}{4}n^2+O(n)$.
\end{lemma}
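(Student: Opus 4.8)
The plan is to reduce the statement to a one–dimensional extremal problem about Lipschitz functions. By the symmetries of $\G_n$ (transposition swaps rows and columns, and a reflection swaps diagonals with skew–diagonals) I may assume $X=R$ is a union of rows and $X'=D_0$ is a union of even diagonals; the remaining cases are analogous, the only feature used being that the chosen diagonals come from a set spaced two apart. Write $I\subseteq[n]$ for the rows comprising $X$, so $|I|=r$ and $|X|=rn\le\alpha n^2$, i.e. $r\le\alpha n$, and let $K$ be the set of even diagonal–indices comprising $X'$. For a row $i$ put $G(i):=|K\cap[i-n,i-1]|$, the number of chosen diagonals meeting row $i$; a cell count then gives
\[
|X\cap X'|=\sum_{i\in I}G(i),\qquad \sum_{i=1}^{n}G(i)=\sum_{k\in K}(n-|k|)=|X'|\le\beta n^2 .
\]
Since only the sizes of $X$ and $X'$ are constrained, I may take $I$ to be the $r$ rows with the largest values of $G$.

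The crux of the whole argument is that $G$ is not merely $1$–Lipschitz but $\tfrac12$–Lipschitz in the sense $|G(i+2)-G(i)|\le 1$. Indeed $G(i+2)-G(i)=|K\cap\{i,i+1\}|-|K\cap\{i-n,i-n+1\}|$, and each two–element window contains exactly one even integer, so both terms lie in $\{0,1\}$. This factor $\tfrac12$, coming precisely from the even diagonals being spaced two apart, is what produces the coefficient $\tfrac14$ (rather than $\tfrac12$) in the claimed bound, and getting it right is the main subtlety. Normalizing $g(y):=G(\lceil yn\rceil)/n$, the function $g\colon[0,1]\to[0,\tfrac12]$ is nonnegative and (in the limit) $\tfrac12$–Lipschitz, with $\int_0^1 g\le\beta$; taking $I$ optimal gives $|X\cap X'|\le n^2\int_0^\alpha g^{\ast}+O(n)$, where $g^{\ast}$ is the decreasing rearrangement of $g$ (again nonnegative and $\tfrac12$–Lipschitz, with the same integral).

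It therefore suffices to prove the analytic inequality: if $g\ge 0$ is $\tfrac12$–Lipschitz with $\int_0^1 g\le\beta$, then $\int_0^\alpha g^{\ast}\le\alpha\sqrt\beta-\tfrac{\alpha^2}{4}$ whenever $\alpha\le 2\sqrt\beta$ (and $\beta\le\tfrac14$, the regime relevant to the application, in which the cap $g\le\tfrac12$ is inactive at the optimum). To finish, set $m:=g^{\ast}(\alpha)$. Since $g^{\ast}$ is decreasing and $\tfrac12$–Lipschitz, on $[0,\alpha]$ one has $g^{\ast}(y)\le m+\tfrac12(\alpha-y)$, so $\int_0^\alpha g^{\ast}\le \alpha m+\tfrac{\alpha^2}{4}$; and on $[\alpha,1]$ one has $g^{\ast}(y)\ge\bigl(m-\tfrac12(y-\alpha)\bigr)^{+}$, so $\int_\alpha^1 g^{\ast}\ge m^2$ and hence $\int_0^\alpha g^{\ast}\le\beta-m^2$. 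Thus $\int_0^\alpha g^{\ast}\le\min\{\alpha m+\tfrac{\alpha^2}{4},\,\beta-m^2\}$; as the first bound increases and the second decreases in $m$, the right–hand side is largest at the crossing $(m+\tfrac{\alpha}{2})^2=\beta$, i.e. $m=\sqrt\beta-\tfrac{\alpha}{2}\ge0$ (this is exactly where $\alpha\le2\sqrt\beta$ enters), with common value $\alpha\sqrt\beta-\tfrac{\alpha^2}{4}$. Multiplying by $n^2$ yields the lemma.

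The hard parts are, first, identifying and justifying the $\tfrac12$–Lipschitz constant together with the reduction to the decreasing rearrangement $g^{\ast}$; and second, the routine but slightly delicate $O(n)$ bookkeeping in the discrete–to–continuous passage, namely the unequal diagonal lengths near the corners of the board and the discreteness of the rearrangement (sorting the integer values $G(i)$). Everything past the $\tfrac12$–Lipschitz observation is a short and self–contained calculus computation.
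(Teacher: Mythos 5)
Your route is genuinely different from the paper's. The paper fixes the set of diagonals, sorts the hyperedges of $X'$ by the fraction $r(e)=|e\cap X|/|e|$ of their cells lying in $X$, bounds this sorted sequence termwise, and compares against an explicit extremal configuration $(Y,Y')$ (the first $\lfloor\alpha n\rfloor$ rows against the even diagonals hugging a corner, with lengths $1,3,\dots$ up to about $2\sqrt{\beta}n$), whose intersection is computed directly. You instead sum over rows, introduce the row-counting function $G(i)=|K\cap[i-n,i-1]|$, and reduce everything to a one-dimensional statement about the decreasing rearrangement of a $\tfrac12$-Lipschitz function. Your identification of the two-step Lipschitz bound $|G(i+2)-G(i)|\le 1$ is exactly the right discrete avatar of what makes the coefficient $\tfrac14$ appear, and it cleanly isolates the role of the parity class of the diagonals; the paper buries the same fact in the denominators $\lfloor\alpha n\rfloor+2i$. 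One small point you should make explicit: to conclude that the \emph{rearrangement} $g^\ast$ is $\tfrac12$-Lipschitz you also need the trivial one-step bound $|G(i+1)-G(i)|\le 1$ (the two-step bound alone does not survive sorting, as the interleaving of two constant sequences with different values shows); with both bounds the sorted sequence does satisfy $G^\ast(t)-G^\ast(t+2)\le 1$. Your approach arguably yields a cleaner and more checkable argument than the paper's, which asserts $|Y'|\le (2k)^2/4$ for a $Y'$ that, as literally defined ($j-i\in\{0,2,\dots,2k\}$), consists of the \emph{longest} diagonals rather than the shortest ones.

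There is, however, a genuine gap in your final calculus step. The inequality $\int_\alpha^1 g^\ast\ge m^2$ uses the full triangle of base $2m$ under the graph of $y\mapsto (m-\tfrac12(y-\alpha))^{+}$, which only fits inside $[\alpha,1]$ when $2m\le 1-\alpha$. This can fail in the regime you are working in: for $\beta=\tfrac14$, $\alpha=0.9$ and $g^\ast\equiv\tfrac14$ one has $m=\tfrac14$, $\int_{0.9}^{1}g^\ast=0.025<m^2$, and your intermediate bound $\int_0^\alpha g^\ast\le\beta-m^2$ is false (the left side is $0.225$, the right side $0.1875$). The conclusion survives, but you must split into cases: when $2m>1-\alpha$ the truncated triangle gives $\int_\alpha^1 g^\ast\ge m(1-\alpha)-\tfrac{(1-\alpha)^2}{4}$, hence $\int_0^\alpha g^\ast\le\beta-m(1-\alpha)+\tfrac{(1-\alpha)^2}{4}$, and since this is decreasing in $m$ it is at most its value at $m=\tfrac{1-\alpha}{2}$, namely $\beta-\tfrac{(1-\alpha)^2}{4}$, which one checks is at most $\alpha\sqrt{\beta}-\tfrac{\alpha^2}{4}$ precisely because $(\sqrt{\beta}-\tfrac12)(\sqrt{\beta}-\alpha+\tfrac12)\le 0$ when $\beta\le\tfrac14$ and $\alpha\le 2\sqrt{\beta}$. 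Note that the hypothesis $\beta\le\tfrac14$, which you invoke only parenthetically, is genuinely needed here and indeed for the lemma itself: with $\alpha=1$, $\beta=\tfrac12$, $X$ all rows and $X'$ all even diagonals, $|X\cap X'|=\tfrac{n^2}{2}$ exceeds the claimed bound $\approx 0.457n^2$. The lemma as stated in the paper omits this hypothesis, but the non-linear program only imposes the corresponding constraint under the guard $2\sqrt{y_{X'}}\le 1$, so your reading of the relevant regime is the correct one.
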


\begin{proof}
Suppose $\lfloor \alpha n \rfloor$ is odd (the case that $\lfloor \alpha n \rfloor$ is even is similar).  Let $X \in \CartProd$, $X' \in \times$ with $|X| \leq \alpha n^2$ and $|X'| \leq \beta n^2$.  Let $E_{X'}$ be the hyperedges of type $X'$ contained in $X'$.  For each $e \in E_{X'}$ define $r(e):=\frac{|e \cap X|}{|e|}$.  Sort $E_{X'}$ in decreasing order according to $r(e)$. Let $\ell$ be the number of hyperedges of $E_{X'}$ with $r(e)=1$.  Observe that $\ell \leq \lceil \frac{\alpha n}{2} \rceil$.  Moreover, for each $i \in \N$, the $(\ell+i)$th hyperedge in this ordering satisfies $r(e) \leq \frac{\lfloor \alpha n \rfloor}{\lfloor \alpha n \rfloor+2i}$.  Let $Y$ consist of the first $\lfloor \alpha n \rfloor$ rows, and $Y'$ consist of all cells $(i,j)$ such that $j-i \in \{0,2,4, \dots , 2k \}$, where $k$ is the largest integer strictly less than $\lfloor \sqrt{\beta}n \rfloor$. Note that $|Y|=n\lfloor \alpha n \rfloor \leq \alpha n^2$ and $|Y'| \leq \frac{(2k)^2}{4} \leq \frac{(2\sqrt{\beta}n)^2}{4}=\beta n^2$. Let $E_{Y'}$ be the even anti-diagonals contained in $Y'$.  For each $e \in E_{Y'}$ define $r(e):=\frac{|e \cap Y|}{|e|}$.  Sort $E_{Y'}$ in decreasing order according to $r(e)$.  Let $\ell'$ be the number of hyperedges of $E_{Y'}$ with $r(e)=1$.
Note that $\ell'=\lceil \frac{\alpha n}{2} \rceil \geq \ell$.  Moreover, for each $i \in \N$, the $(\ell+i)$th hyperedge in this ordering satisfies $r(e) = \frac{\lfloor \alpha n \rfloor}{\lfloor \alpha n \rfloor+2i}$.  Therefore, $|Y \cap Y'| \geq |X \cap X'|$.  We conclude the proof by observing that
$
|Y \cap Y'| \approx \frac{(\alpha n)(2 \sqrt{\beta} n)}{2} - \frac{(\alpha n)^2}{4}=\alpha \sqrt{\beta}n^2-\frac{\alpha^2}{4}n^2,
$
where the above approximation clearly holds up to a $O(n)$ error. 
\end{proof}
     
     Let $F_1:=\overline{R \cup C \cup D \cup A}$, $F_2:=R \cap C \cap D_0 \cap \overline{D_1} \cap S_0 \cap \overline{S_1}$, and $F_3:=R \cap C \cap \overline{D_0} \cap D_1 \cap \overline{S_0} \cap S_1$.  Consider the following optimization problem.

\label{model:regular}
\begin{flalign*}
 &\textrm{maximize:} \min \{z_{F_1}, z_{F_2} +  z_{F_3}\} \\
 &\textrm{such that} \\
 &\sum_{F \in \mathcal{F}} z_F = 1 \\
 &y_{X} = \sum_{\substack{F \in \mathcal{F} \\ F \subseteq X}} z_F, \text{ for all $X \in \boxtimes$} \\
  &y_{X \cap X'} = \sum_{\substack{F \in \mathcal{F} \\ F \subseteq X \cap X'}} z_F, \text{ for all  $X,X' \in \boxtimes$} \\
   &y_{D_0 \cap D_1} = 0 \\
    &y_{D_0 \cap S_1} = 0 \\
    &y_{S_0 \cap D_1} = 0 \\
    &y_{S_0 \cap S_1} = 0 \\
  &y_{R \cap C} = y_{R} y_C \\
         &y_{X}^2+4y_{X \cap X'} \leq 4y_{X}\sqrt{y_{X'}}, \\
         &\text{if $X \in \CartProd$, $X' \in \times$, $y_X \leq 2 \sqrt{y_{X'}} \leq 1$}\\
 &y_X \in [0,\frac{1}{2}], \text{ for all $X \in \times$}  \\
 &z_F \in [0,1], \text{ for all $F \in \mathcal{F}$} \\
\end{flalign*}

Lemma~\ref{lem:crazyconstraint} immediately implies the following.  

\begin{lemma} \label{lem:alpha}
    If $\alpha \in [0,1]$ is the optimal value to the above optimization problem and $\alpha' > \alpha$ then $a(n) \leq \alpha' n^2$, for all sufficiently large $n$.
\end{lemma}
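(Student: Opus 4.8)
The plan is to show that every peaceful battle on $\G_n$ gives rise to a point that is \emph{asymptotically feasible} for the optimization problem, with objective value at least $\tfrac{1}{n^2}\min\{|B|,|W|\}$; the stated bound then follows by a compactness argument. First I would fix a largest peaceful battle $(B,W)$ on $\G_n$ with $|B|=|W|=a(n)$, form $R,C,D_0,D_1,A_0,A_1$ as in the setup, and set $z_S:=|S|/n^2$ for each region $S\in\mathcal S$. Since every black queen lies on a hit row and a hit column, and since any cell has $i-j$ and $i+j$ of the same parity (so it lies on an even diagonal and an even skew-diagonal, or on an odd diagonal and an odd skew-diagonal), we get $B\subseteq (R\cap C\cap D_0\cap A_0)\cup(R\cap C\cap D_1\cap A_1)$. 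Because $D_0\cap D_1=A_0\cap A_1=\emptyset$, the two sets on the right are exactly $S_2$ and $S_3$, whence $|B|\le (z_{S_2}+z_{S_3})n^2$. As no white queen may share a hyperedge with a black queen, $W$ is disjoint from each of $R,C,D_0,D_1,A_0,A_1$, so $W\subseteq\overline{R\cup C\cup D_0\cup D_1\cup A_0\cup A_1}$ and $|W|\le z_{S_1}n^2$. Combining, $a(n)\le \min\{z_{S_1},\,z_{S_2}+z_{S_3}\}\,n^2$.

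Next I would verify that the $z_S$ arising from the battle satisfy every constraint of the optimization problem up to an error that vanishes as $n\to\infty$. The identity $\sum_S z_S=1$ and the defining equations for the $y_X$ and $y_{X\cap X'}$ hold exactly, since the regions partition $[n]\times[n]$. The equalities $y_{D_0\cap D_1}=y_{D_0\cap A_1}=y_{A_0\cap D_1}=y_{A_0\cap A_1}=0$ hold exactly because distinct-parity diagonals and skew-diagonals are disjoint, and $y_{R\cap C}=y_Ry_C$ holds exactly because $R\cap C$ is an $r\times c$ subgrid while $|R|=rn$ and $|C|=cn$. The box constraints $y_X\in[0,\tfrac12]$ for $X\in\times$ hold since the cells on even (resp.\ odd) diagonals are precisely those with $i-j$ odd (resp.\ even), at most half the board. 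The only inequality carrying an error is the key one: whenever $y_X\le 2\sqrt{y_{X'}}\le 1$ with $X\in\CartProd$ and $X'\in\times$, \Cref{lem:crazyconstraint} applied with $\alpha=y_X$, $\beta=y_{X'}$ gives $|X\cap X'|\le y_X\sqrt{y_{X'}}\,n^2-\tfrac{y_X^2}{4}n^2+O(n)$, i.e.\ $y_X^2+4y_{X\cap X'}\le 4y_X\sqrt{y_{X'}}+O(1/n)$. Thus the battle produces a point $p_n\in[0,1]^{\mathcal S}$ satisfying all constraints with additive slack $O(1/n)$ and with objective value at least $a(n)/n^2$.

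Finally I would pass to the limit. Let $\mathcal F$ be the (compact) feasible region, $f=\min\{z_{S_1},z_{S_2}+z_{S_3}\}$ its (continuous) objective, and $\alpha=\max_{\mathcal F}f$. Suppose for contradiction that $a(n)\ge\alpha'n^2$ for infinitely many $n$. Restricting to such $n$ and passing to a subsequence along which $p_n$ converges (possible by compactness of $[0,1]^{\mathcal S}$) to some $p^\ast$, the $O(1/n)$ slacks vanish in the limit, so $p^\ast\in\mathcal F$; by continuity $f(p^\ast)=\lim f(p_n)\ge\alpha'>\alpha$, contradicting $\alpha=\max_{\mathcal F}f$. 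Hence $a(n)<\alpha'n^2$ for all sufficiently large $n$, as required. I expect the main obstacle to be precisely this limiting step, namely confirming that approximate feasibility passes to exact feasibility at $p^\ast$; this needs a little care for the conditional constraint, which is imposed only when $y_X\le 2\sqrt{y_{X'}}\le 1$, but is handled by noting that both its hypothesis and its conclusion are closed conditions preserved under the convergent subsequence.
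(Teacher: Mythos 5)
Your proof is correct and matches the paper's intent: the paper offers no explicit proof of this lemma, simply asserting that \Cref{lem:crazyconstraint} ``immediately implies'' it, and your argument --- reading off an approximately feasible point $p_n$ from an optimal battle (with $O(1/n)$ slack only in the key constraint) and passing to a limit by compactness of $[0,1]^{\mathcal{S}}$ --- is exactly the routine verification the paper leaves unstated. Two cosmetic quibbles only: the feasible region $\mathcal{F}$ need not itself be closed because of the conditional constraint (so ``$\max$'' should be ``$\sup$''), but your argument only needs $\sup_{\mathcal{F}}f=\alpha$ together with $p^\ast\in\mathcal{F}$, which your boundary-case discussion already secures; and for odd $n$ the box constraint $y_X\leq\tfrac12$ holds only up to $O(1/n^2)$, an error that likewise vanishes in the limit.
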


Here we used the APOPT solver via Gekko~\cite{beal2018gekko} which ran in 27 iterations and found an optimal solution 0.171572875253810.  This proves~Theorem~\ref{thm:regularboardupperbound}.  

\section{The Algorithm} \label{sec:algorithm}

We now describe our local search algorithm~\cite{pieceable}.  For concreteness, we will use the regular board $\G_n$, but the same algorithm works on the torus simply by replacing $\G_n$ by $\T_n$ in what follows.  A key concept is the notion of a \defn{swap}, which we now define.  For each $X \subseteq V(\G_n)$, let $E_X$ be the hyperedges $e \in E(\G_n)$ such that $e \cap X \neq \emptyset$, and let  $\widehat{X}:= V(\G_n) \setminus \bigcup_{e \in E_X} e$.  Note that $(X,\widehat{X})$ is a peaceful battle on $\G_n$ for all $X \subseteq V(\G_n)$.  Given a peaceful battle $(B,W)$ on $\G_n$ and $e \in E(\G_n)$, we say that $(B',W')$ is obtained from $(B,W)$ by \defn{swapping on $e$} if $B'=B \setminus e$ and $W'=\widehat{B'}$.  The main idea is that given a peaceful battle $(B,W)$ with $|B| > |W|$, we can attempt to use swaps to increase $\min \{|B|, |W|\}$.  For example,~Figure~\ref{fig:swapwindow} shows how to improve an $(a,b)$-plaid on $\T_{24}$ via swaps.  

\begin{figure}[ht]
\centering
\subfloat{\includegraphics[width=1.6in]{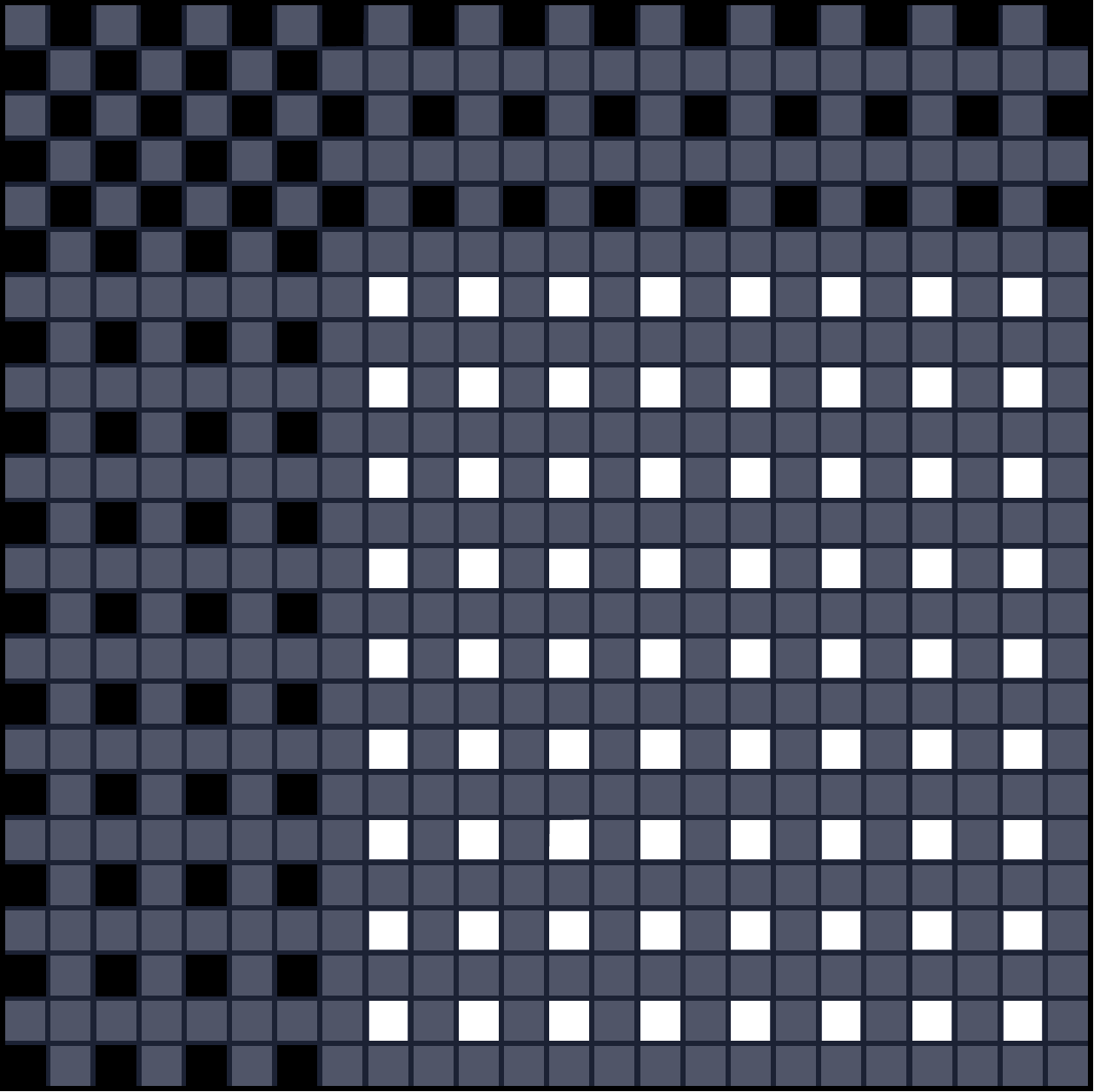}}\hfil 
\subfloat{\includegraphics[width=1.6in]{importantconfigurations/windowB_inkscape1.pdf}}
\caption{On the left is an $(a,b)$-plaid $(B,W)$ with $|B|=84$ and $|W|=72$. Let $c_{22}$ and $c_{24}$ be the last and third to last columns, and $r_{24}$ be the bottom row. By swapping on $c_{22}$, then $c_{24}$, and then $r_{24}$, we obtain the peaceful battle $(B', W')$ on the right.  Note that $|B'|=|W'|=74$, so $\min \{|B'|, |W'| \} > \min \{|B|, |W|\}$.}
\label{fig:swapwindow}
\end{figure}

Beginning with an initial peaceful battle $(B,W)$, the algorithm performs swaps to `improve' the solution until $\min \{|B|, |W| \} \geq q$, where $q$ is an input parameter.  
There are choices to be made about what `improve' means.  One natural candidate is to always improve the parameter $\min \{|B|, |W| \}$ at each step of the algorithm.  However, in practice we have found that it is advantageous to allow $\min \{|B|, |W| \}$ to decrease, provided $|B|+|W|$ increases.  Thus, every swap either increases $\min \{|B|, |W| \}$ or $|B|+|W|$.  It appears as if `most' initial choices of $(B,W)$ converge to an optimal solution via a sequence of swaps.  In practice, we have observed the fastest convergence by taking $|B|$ to be a random set of size around $\frac{n}{5}$ and $W$ to be $\widehat{B}$.  We also allow our algorithm to re-initialize itself using this initial condition. For a precise description of our algorithm, we refer the reader to~Algorithm~\ref{alg:main}.

\begin{algorithm}\caption{\sc{SWAP}}\label{alg:main} 
\begin{algorithmic}[1]
\REQUIRE $n$, target quality $q$ 
\ENSURE A feasible solution $(B,W)$ on $\G_n$ (or $\T_n)$.
\REPEAT 
\STATE{$B' \gets $ randomly select $n/5$ squares}
\STATE{$W' \gets \widehat{B'}$}
\REPEAT \label{line:repeat}
\STATE{$(B,W) \gets (B',W')$}
\STATE{$k \gets \min(|B|,|W|)$}
\IF{$|W| > |B|$}
    \STATE{$(B,W) \gets (W,B)$ \hfill\COMMENT{swap colors}}
\ENDIF  
\IF{there is a hyperedge $e$ such that $|\widehat{B \setminus e}| > |W| $}
\STATE{$W' \gets \widehat{B \setminus e}$; $B' \gets B \setminus e$ \hfill\COMMENT{swap on $e$ to improve $|W|$}}
\ENDIF            
\IF{there is a hyperedge $e$ such that $|B \setminus e|+|\widehat{B \setminus e}| > |B| + |W|$}
\STATE{$W' \gets \widehat{B \setminus e}$; $B' \gets B \setminus e$ \hfill\COMMENT{swap on $e$ to improve $|B| + |W|$}}
\ENDIF
\UNTIL{$\min(|B'|,|W'|) \leq k$}
\UNTIL{$|W| \geq q$}
\STATE{return $(B,W)$}\label{terminate}
\end{algorithmic}
\end{algorithm}

Our algorithm appears to perform extremely well in practice for both the regular and toroidal boards. 
For the odd torus $\T_n$, the algorithm seems to find solutions $(B,W)$ with $\min \{|B|, |W| \} \approx \frac{n^2}{11}$. This outperforms the $n$-argyle construction by $\approx 0.0075n^2$. In~Appendix~\ref{sec:appendix}, we list the best solutions output by our algorithm for $\T_n$ for all odd $n \in [13,63]$.  There does not appear to be a regular pattern among our examples.  It may be that for the odd torus, the optimal solution subtly depends on the prime factorization on $n$, in a way which we currently do not understand.  For instance,
 our example for $n=53$ actually contains fewer queens than our example for $n=51$. Besides this single exception, our solutions have a strictly increasing number of queens.

For the even torus, the algorithm quickly finds an $(a,b)$-plaid.  In fact, it is even able to \emph{improve} on an optimal $(a,b)$-plaid through swapping as seen in Figure~\ref{fig:swapwindow}. Therefore, there are optimal solutions for the even torus which are not $(a,b)$-plaids.  However, these `swapped' $(a,b)$-plaids have at most two more black queens than an optimal $(a,b)$-plaid.  This leads us to the following conjecture.  

\begin{conjecture}\label{conj:EvenAlmostOptimal}
   Let $n \in \N$ be even and let $(B,W)$ be an $(a,b)$-plaid on $\T_n$ with $\min \{|B|,|W|\}$ maximum. Then $t(n)\leq 2+ \min\{|B|,|W|\}$.
\end{conjecture}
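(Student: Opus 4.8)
The plan is to prove the conjecture by reducing it to a finite optimization problem over \emph{parity-separated} configurations, and then bounding the extra gain available to non-separated configurations by the additive constant~$2$.

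First I would exploit the parity decomposition that already underlies the whole even-torus analysis. Write $E$ and $O$ for the sets of cells $(i,j)$ with $i+j$ even and odd, respectively. As observed in the plaid lemma, every $\Z_n$-diagonal and every $\Z_n$-skew-diagonal is parity consistent, hence lies entirely in $E$ or entirely in $O$; only rows and columns meet both classes. Consequently, for any peaceful battle $(B',W')$ on $\T_n$ the diagonal and skew-diagonal constraints decouple completely across the two parity classes, and the sole coupling between $E$ and $O$ is the requirement that no row and no column is bichromatic.

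Second, I would solve the parity-separated case exactly, i.e.\ the case $B' \subseteq O$ and $W' \subseteq E$, which is precisely the situation of an $(a,b)$-plaid. Here every diagonal and skew-diagonal constraint is vacuous, so the only constraints are that the set of rows meeting $B'$ is disjoint from the set of rows meeting $W'$, and likewise for columns. Recording how many odd and how many even rows and columns each colour uses, and setting $M := n/2$, the problem becomes the finite program of maximizing $\min\{pq'+qp',\ (M-p)(M-p')+(M-q)(M-q')\}$ with all variables ranging over $[0,M]$. A rearrangement argument that assigns every odd row and every odd column to $B'$ should show that this program is optimized at $p=p'=M$ and $q=q'=(2-\sqrt3)M$; solving $2Mq=(M-q)^2$ then recovers $q=M(2-\sqrt3)$ and the value $(2-\sqrt3)n^2/2 \approx 0.1339 n^2$. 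This would identify the best $(a,b)$-plaid with the optimum over all parity-separated battles, matching \Cref{lem:Even_LowerBound}.

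The hard part, and the reason this remains a conjecture, is the final step: showing that abandoning parity-separation can help by at most two queens. I would attempt an exchange/shifting argument in the spirit of the swaps of \Cref{sec:algorithm}, starting from an arbitrary optimal battle $(B',W')$ and repeatedly moving cells of the minority parity of each colour (or swapping on a well-chosen hyperedge) so as to increase parity-separation without decreasing $\min\{|B'|,|W'|\}$, until only an irreducible parity-separated core remains. The main obstacle is to prove that this normalization loses at most two cells: unlike the asymptotic estimate in \Cref{thm:eventorusupperbound}, no $O(n)$ slack is permitted, and the gap between the LP bound $0.1402 n^2$ and the plaid value $0.1339 n^2$ shows that the continuous relaxation simply cannot detect this phenomenon. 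A successful proof must therefore use genuinely integral structure---the rigidity of parity along each line, and the fact that a bichromatic-free row or column forces an entire parity class to a single colour---to confine all of the potential gain to a bounded boundary region.
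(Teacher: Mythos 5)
The statement you are trying to prove is stated in the paper as a \emph{conjecture}: the authors offer no proof, only experimental evidence from their local search algorithm (which finds even-torus solutions exceeding the best $(a,b)$-plaid by at most two black queens). So there is no proof in the paper to compare against, and your proposal does not close the gap either: you yourself concede that the decisive step --- showing that a battle which is not parity-separated can beat the best parity-separated one by at most two queens --- is left open. That step is not a technical loose end; it is the entire content of the conjecture. The exchange/normalization argument you sketch would need to control the loss \emph{exactly}, with no $O(n)$ or even $O(1)$-beyond-$2$ slack, and you correctly note that the continuous relaxation of \Cref{thm:eventorusupperbound} (optimal value $\approx 0.1402$ versus the plaid value $\approx 0.1339$) cannot see this. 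Nothing in your outline suggests how the swap-based normalization would be shown to terminate with a loss of at most two cells, so the proposal is a restatement of the difficulty rather than a proof strategy with a plausible path to completion.

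Two further points in the parts you do claim to resolve. First, ``parity-separated'' ($B'\subseteq O$, $W'\subseteq E$) is strictly weaker than ``is an $(a,b)$-plaid'': a parity-separated battle need only avoid bichromatic rows and columns, and its row/column supports need not form the two complementary combinatorial rectangles that define a plaid. Your rearrangement argument reducing the general parity-separated optimum to the plaid optimum is asserted, not proved, and it is exactly the kind of claim that needs care (e.g.\ the row set used by $B'$ and the row set used by $W'$ need not partition $[n]$). Second, even granting that reduction, the conjecture is an exact statement for every even $n$, whereas your analysis of the finite program is asymptotic: $q=(2-\sqrt3)M$ is irrational, so you would still need the integer optimum of the program for each $M=n/2$ and a matching exact count for the best plaid, not just the $\tfrac{2-\sqrt3}{2}n^2-O(n)$ estimate of \Cref{lem:Even_LowerBound}.
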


Finally, for the regular board, our algorithm quickly finds the Ainley construction~\cite{ainley1977mathematical}, as well as other examples (some in the OEIS, some not) which meet the Ainley bound.  We did not manage to find any examples that beat the Ainley bound.  Thus, our experimental evidence suggests that the Ainley construction is indeed optimal.  
Although we do not have a proof of correctness, we conjecture that there exist $d \in \N$ and $p >0$  such that our algorithm outputs an optimal peaceful battle $(B,W)$ on $\G_n$ after at most $n^{d}$ steps with probability $p$.  

\section*{Acknowledgements} 
Matthew Drescher was supported by the National Science Foundation under Grant \#2127309 to the Computing Research Association for the CIFellows 2021 Project.  Thank you to Stephen Moehle for the excellent and thorough code reviews.  We are also grateful to N.J.A. Sloane for reading an early draft of this paper.  Matthew Drescher thanks John D. Owens and the Gunrock Graph GPU Lab for introducing him to the problem in a course assignment and for indulging in various interesting discussions.  Tony Huynh thanks the Institute for Basic Science in South Korea where part of this research was conducted.  
 
\bibliographystyle{DavidNatbibStyle}
\bibliography{references}
  \let\cleardoublepage\clearpage

 \cleardoublepage

    \appendix

  \section{Best odd torus solutions} \label{sec:appendix}
Below, we list the best peaceful battles found by~Algorithm~\ref{alg:main} on $\T_n$ for all odd $n$ between $13$ and $63$.  
 
 \begin{figure}[H]
\captionsetup[subfloat]{labelformat=empty}
\subfloat[$n=13, \min\{|B|, |W|\}=16$]{\includegraphics[width = \textwidth]{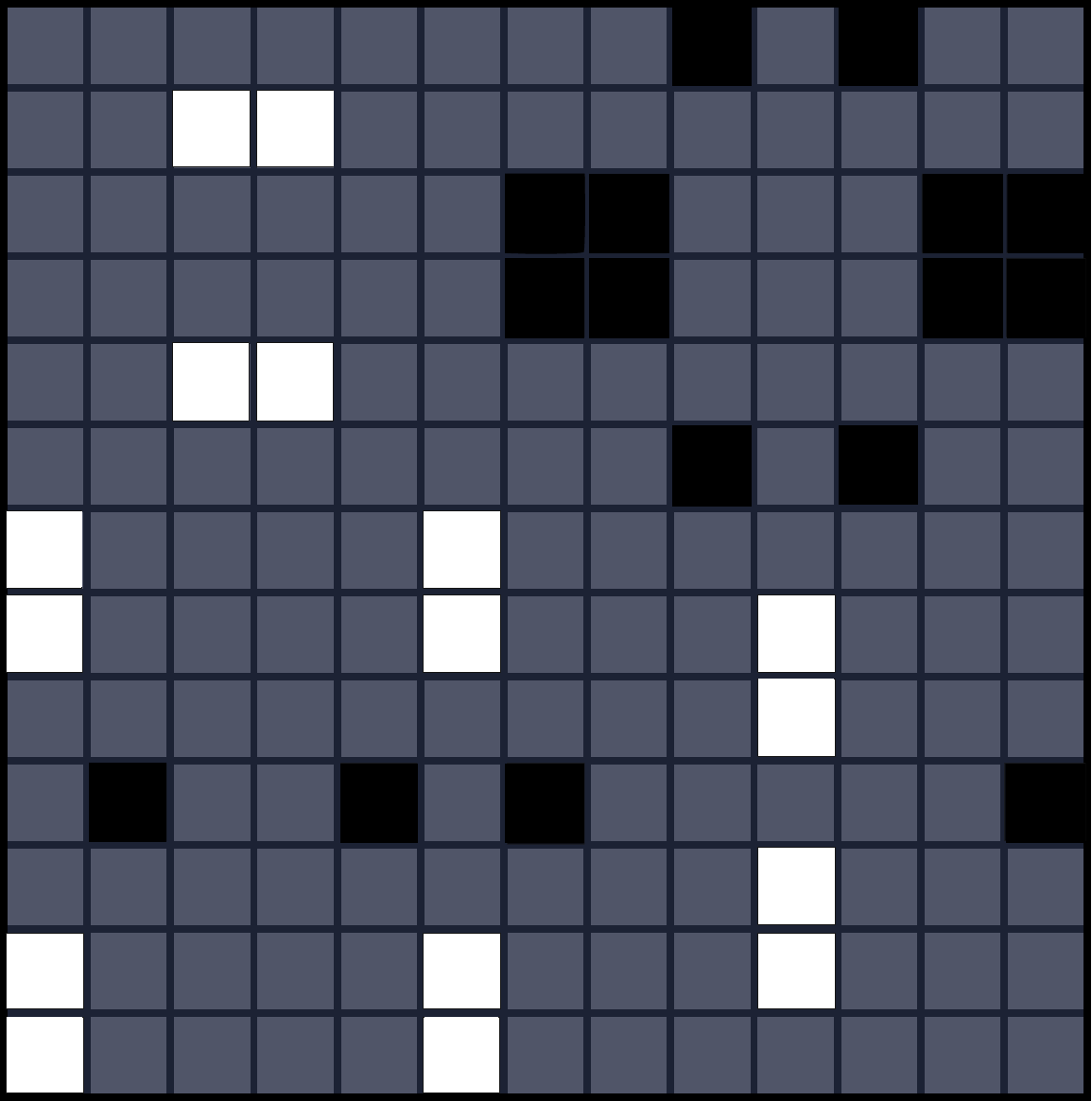}} 
\end{figure}

\begin{figure}[H]
\captionsetup[subfloat]{labelformat=empty}
\subfloat[$n=15, \min\{|B|, |W|\}=20$]{\includegraphics[width = \textwidth]{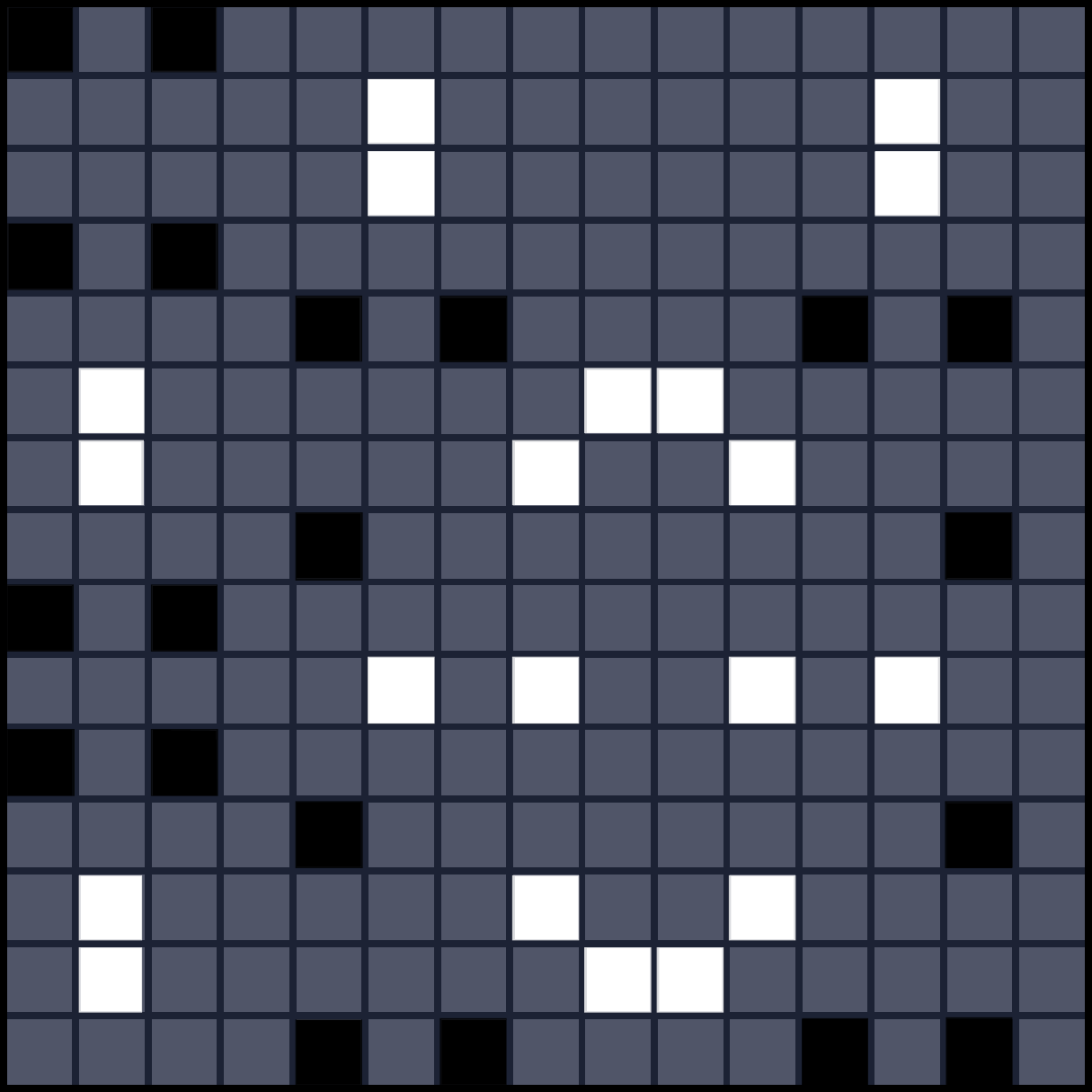}}
\end{figure}

\begin{figure}[H] 
\captionsetup[subfloat]{labelformat=empty}
\subfloat[$n=17, \min\{|B|, |W|\}=28$]{\includegraphics[width = \textwidth]{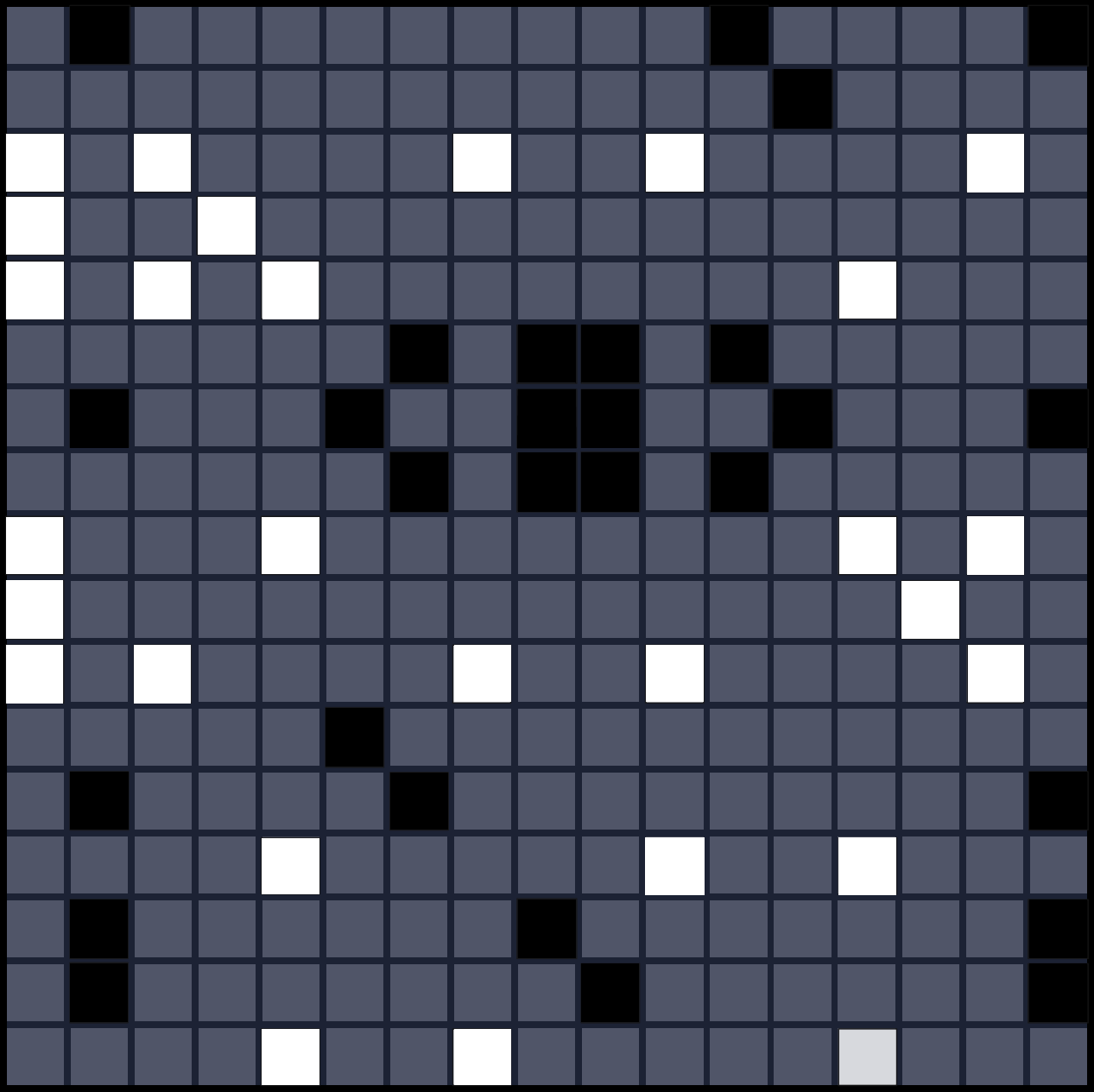}} 
\end{figure}

\begin{figure}[H]
\captionsetup[subfloat]{labelformat=empty}
\subfloat[$n=19, \min\{|B|, |W|\}=32$]{\includegraphics[width = \textwidth]{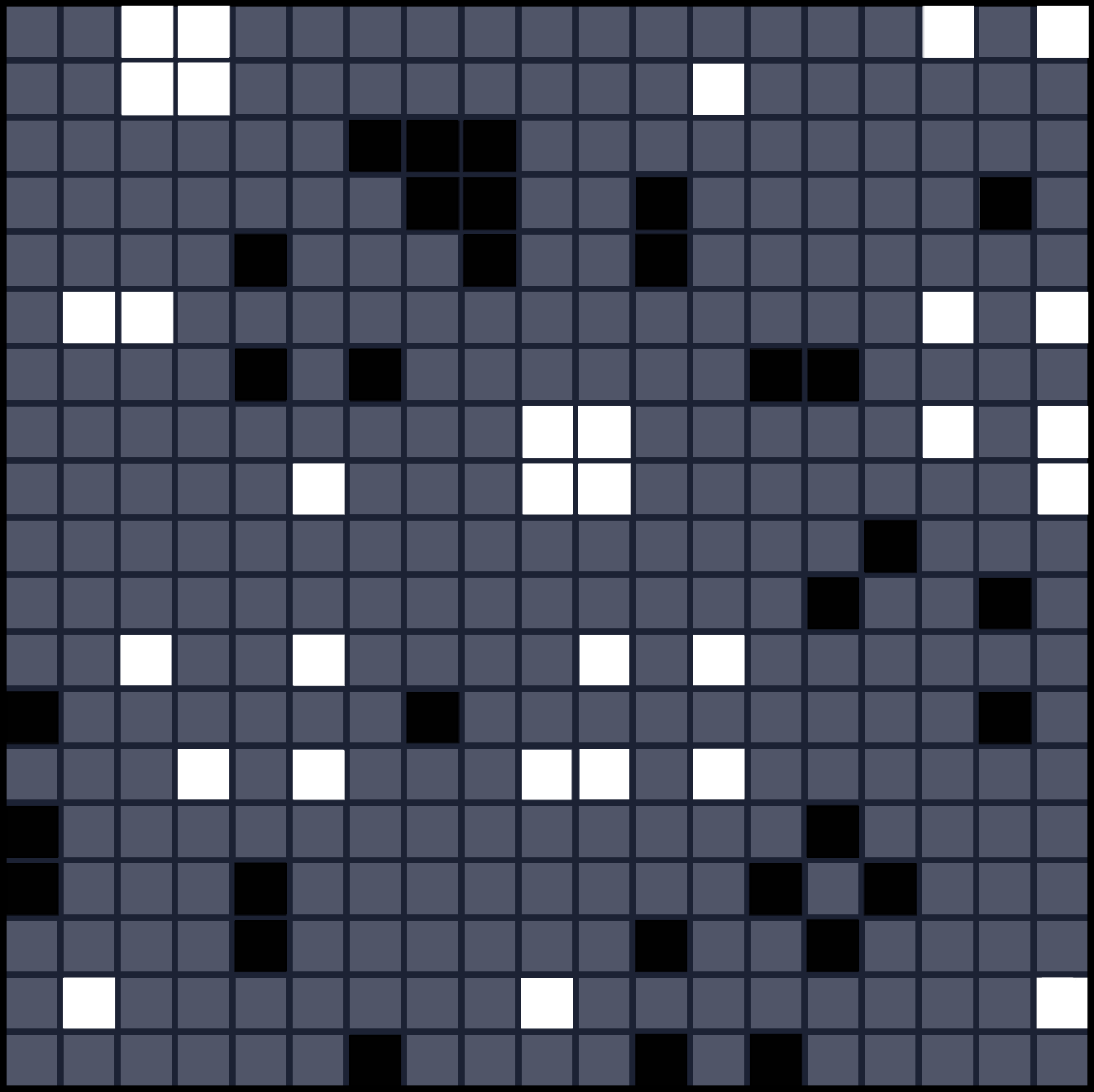}} 
\end{figure}

\begin{figure}[H]
\captionsetup[subfloat]{labelformat=empty}
\subfloat[$n=21, \min\{|B|, |W|\}=40$]{\includegraphics[width = \textwidth]{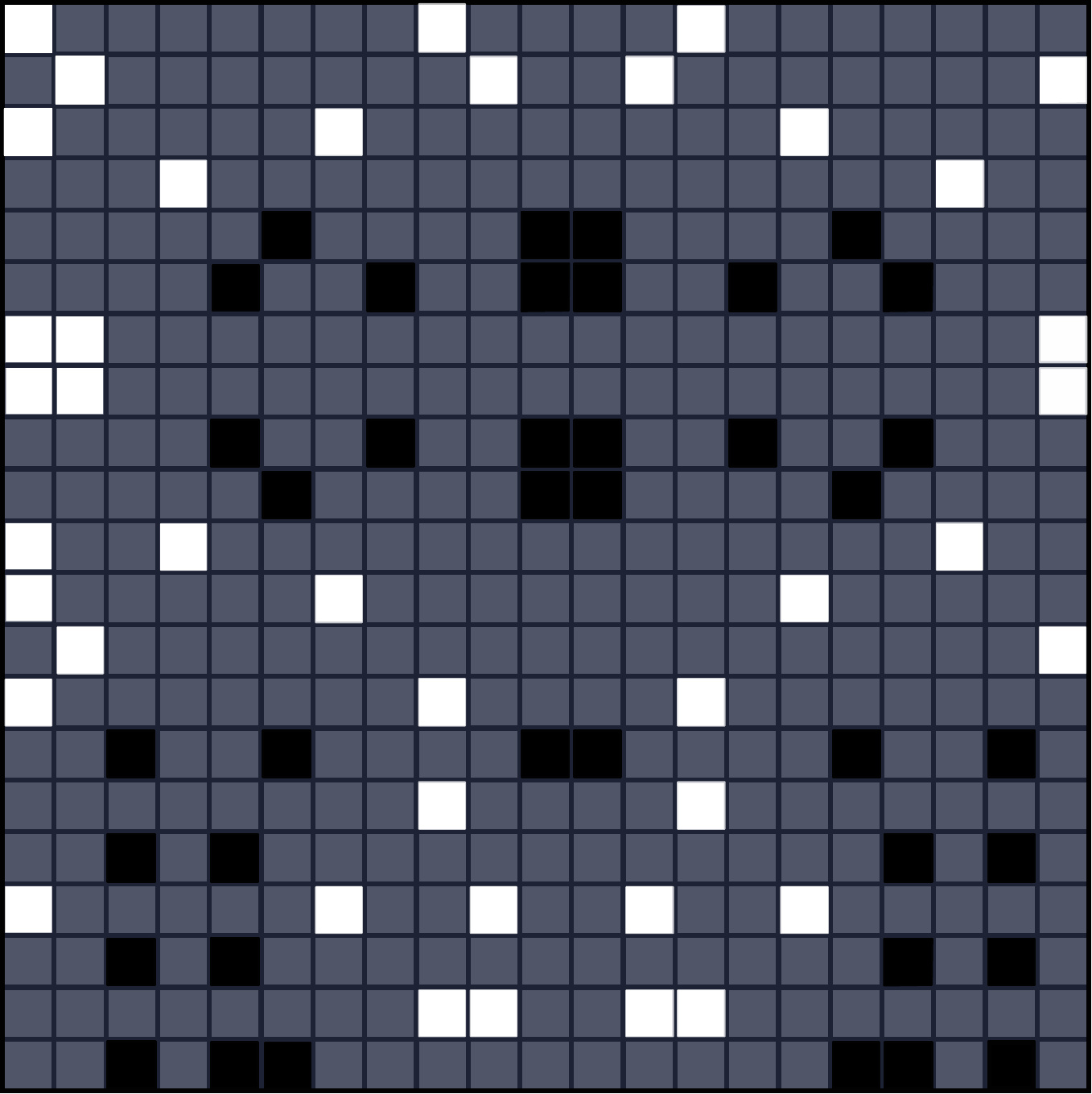}} 
\end{figure}

\begin{figure}[H]
\captionsetup[subfloat]{labelformat=empty}
\subfloat[$n=23, \min\{|B|, |W|\}=48$]{\includegraphics[width = \textwidth]{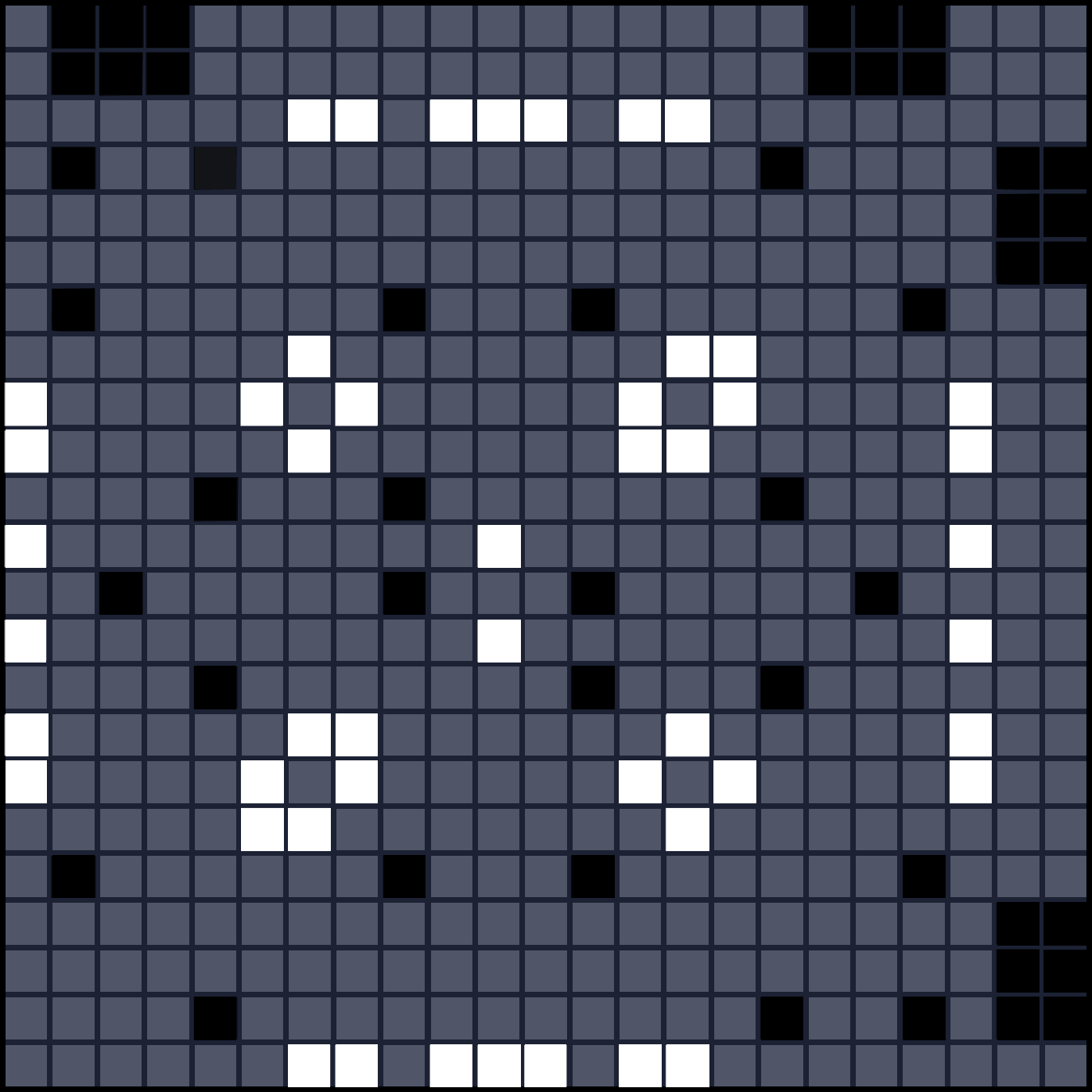}} 
\end{figure} 

\begin{figure}[H]
\captionsetup[subfloat]{labelformat=empty}
\subfloat[$n=25, \min\{|B|, |W|\}=56$]{\includegraphics[width = \textwidth]{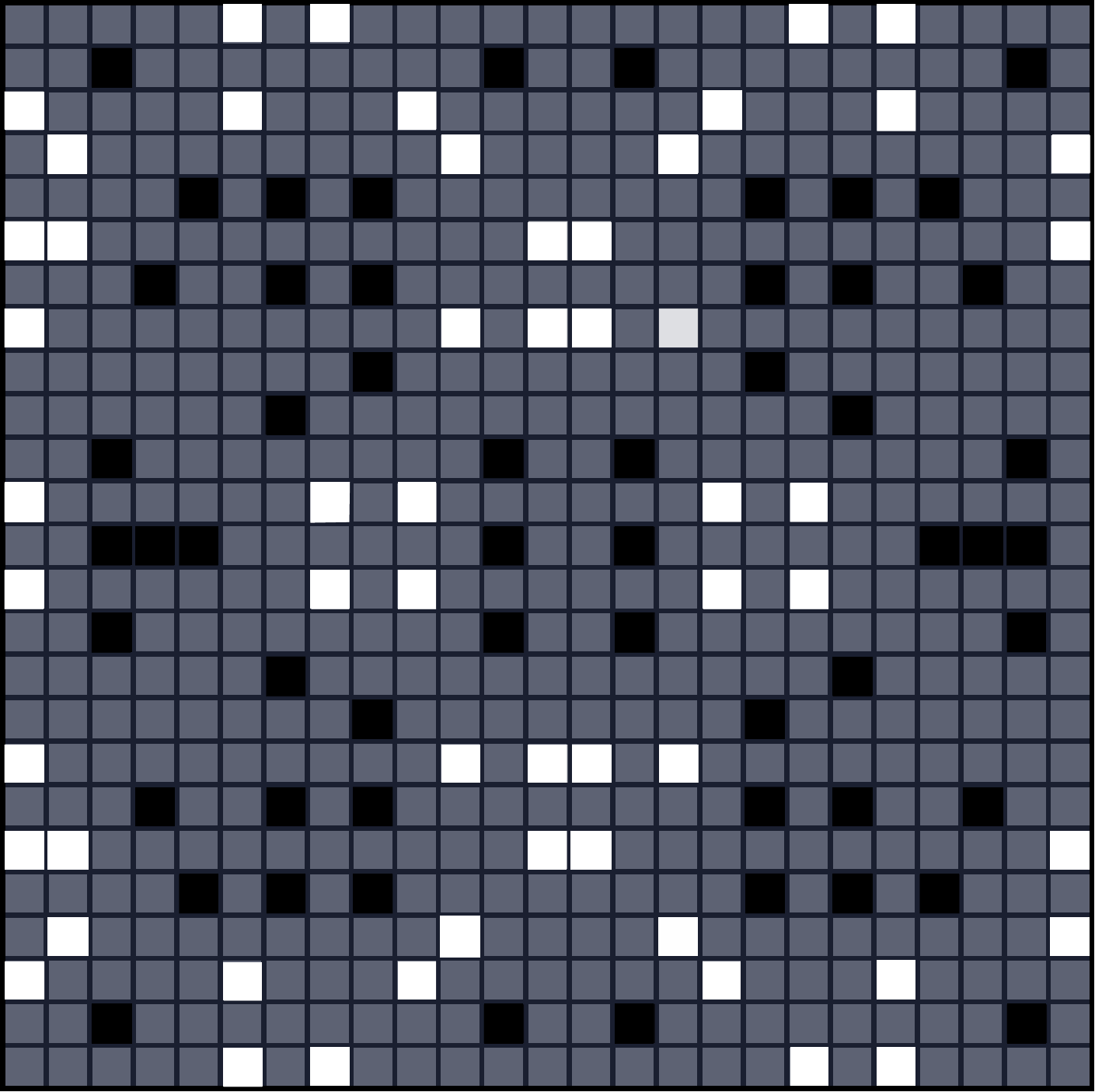}} \\
\end{figure}

\begin{figure}[H]
\captionsetup[subfloat]{labelformat=empty}
\subfloat[$n=27, \min\{|B|, |W|\}=66$]{\includegraphics[width = \textwidth]{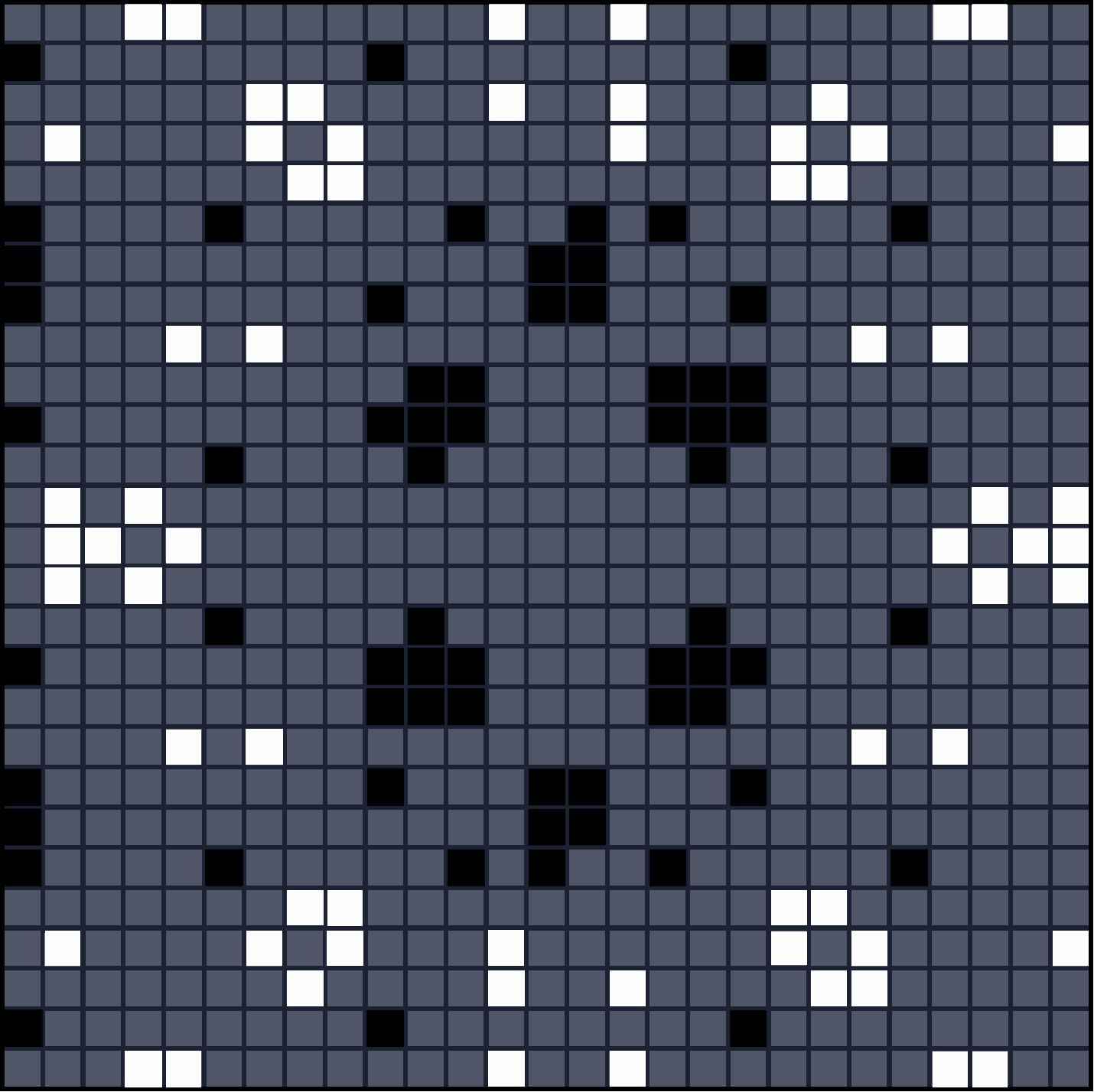}} 
\end{figure}

\begin{figure}[H]
\captionsetup[subfloat]{labelformat=empty}
\subfloat[$n=29, \min\{|B|, |W|\}=76$]{\includegraphics[width = \textwidth]{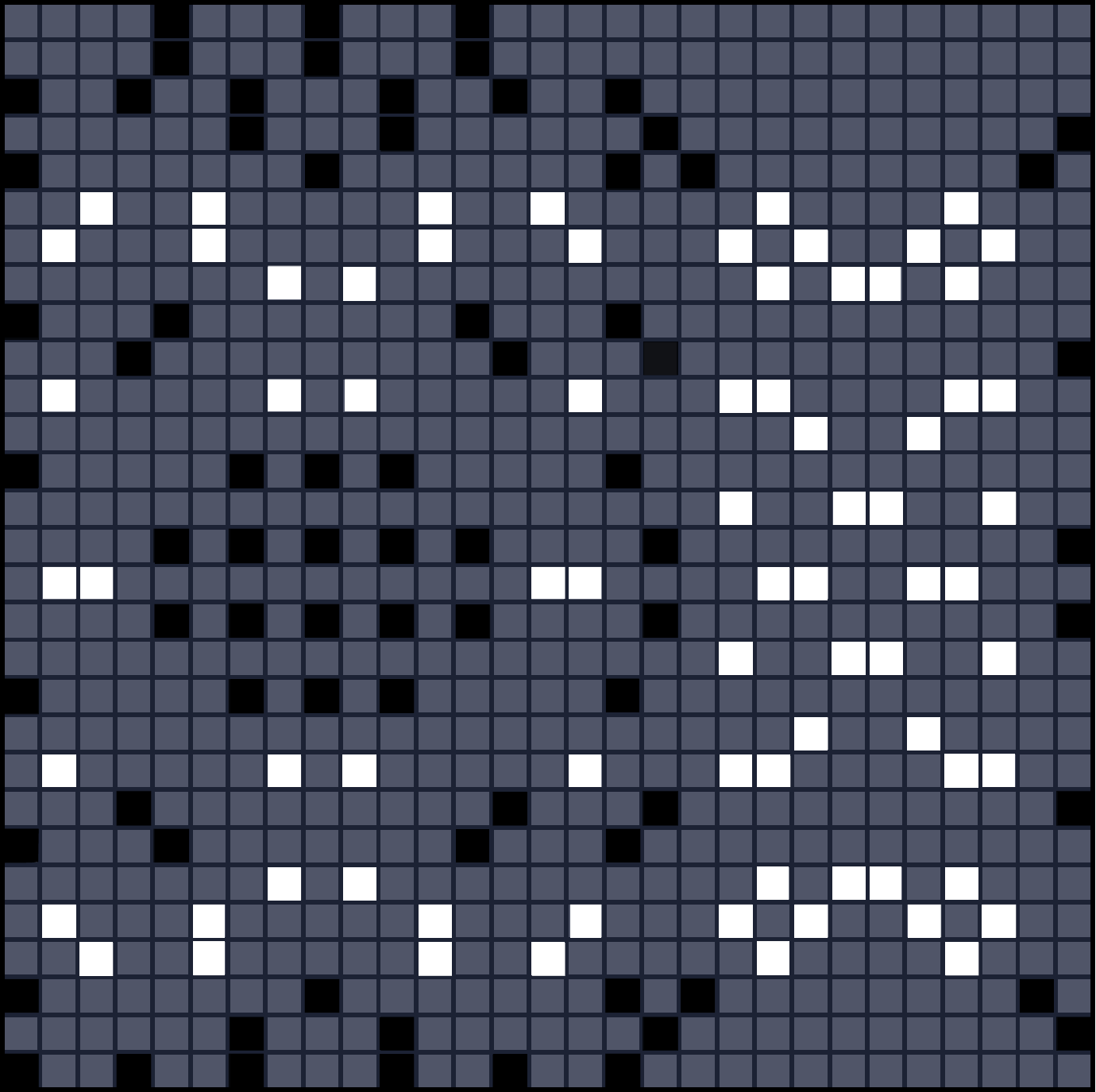}}
\end{figure}

\begin{figure}[H]
\captionsetup[subfloat]{labelformat=empty}
\subfloat[$n=31, \min\{|B|, |W|\}=88$]{\includegraphics[width = \textwidth]{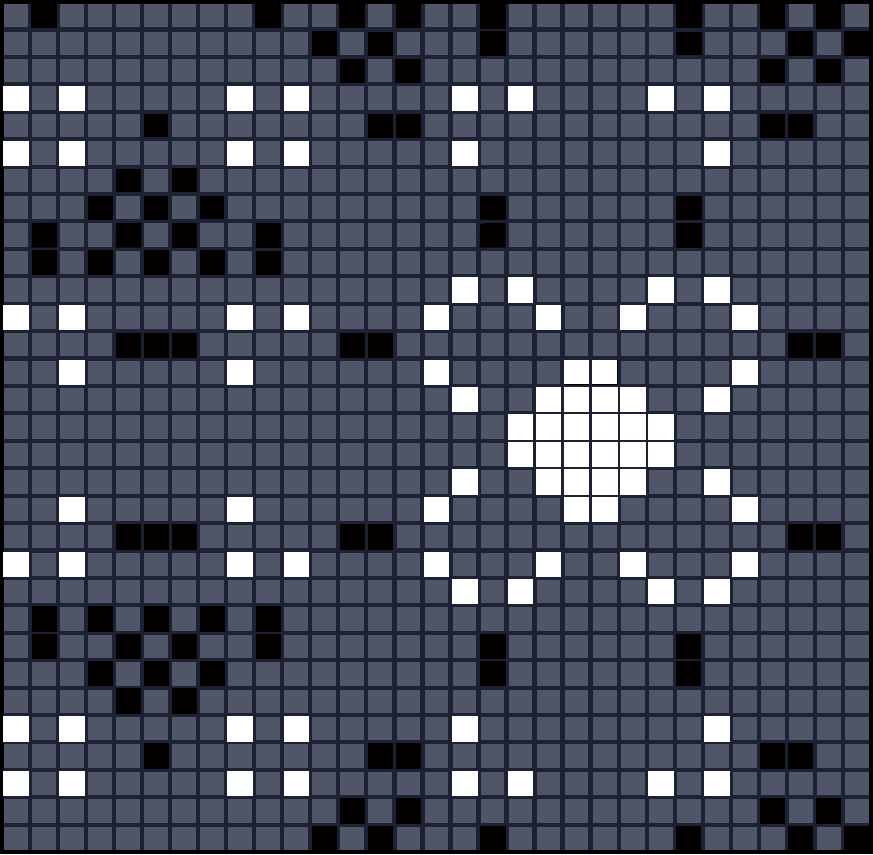}}
\end{figure}

\begin{figure}[H]
\captionsetup[subfloat]{labelformat=empty}
\subfloat[$n=33, \min\{|B|, |W|\}=101$]{\includegraphics[width = \textwidth]{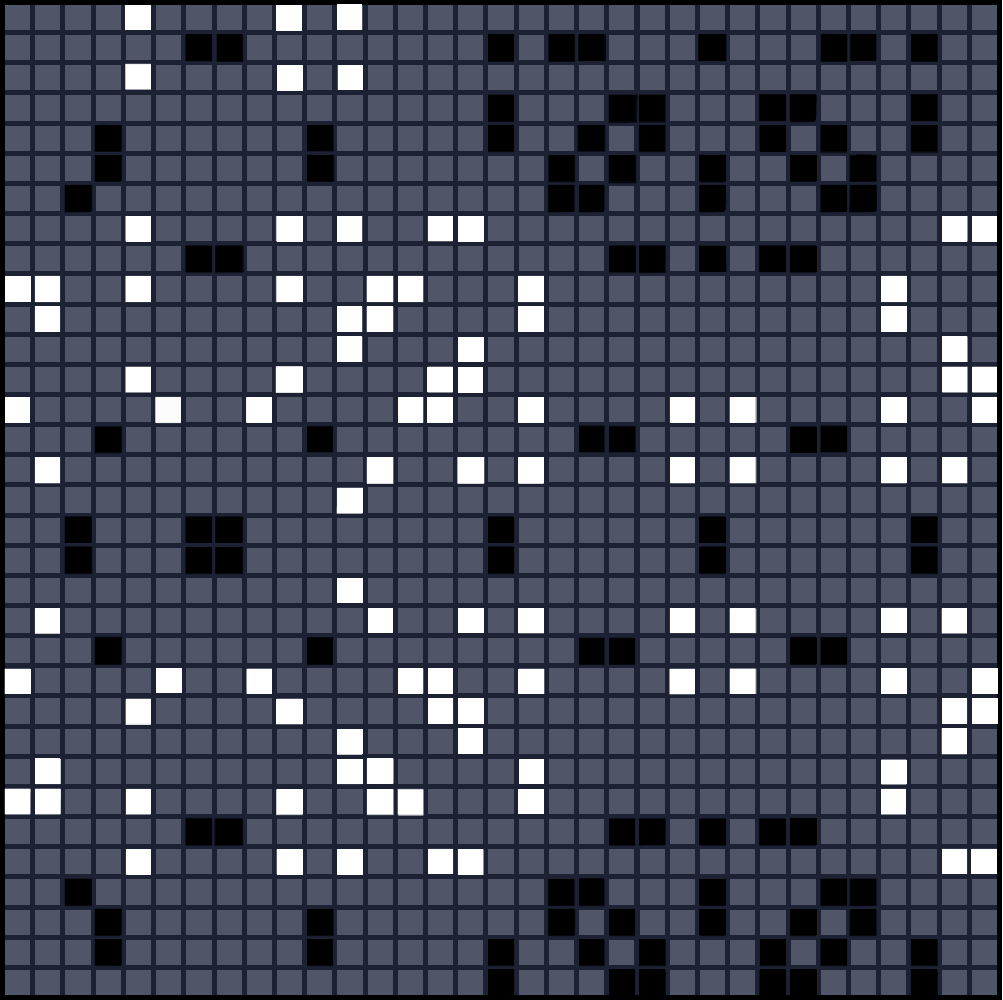}} 
\end{figure}

\begin{figure}[H]
\captionsetup[subfloat]{labelformat=empty}
\subfloat[$n=35, \min\{|B|, |W|\}=110$]{\includegraphics[width = \textwidth]{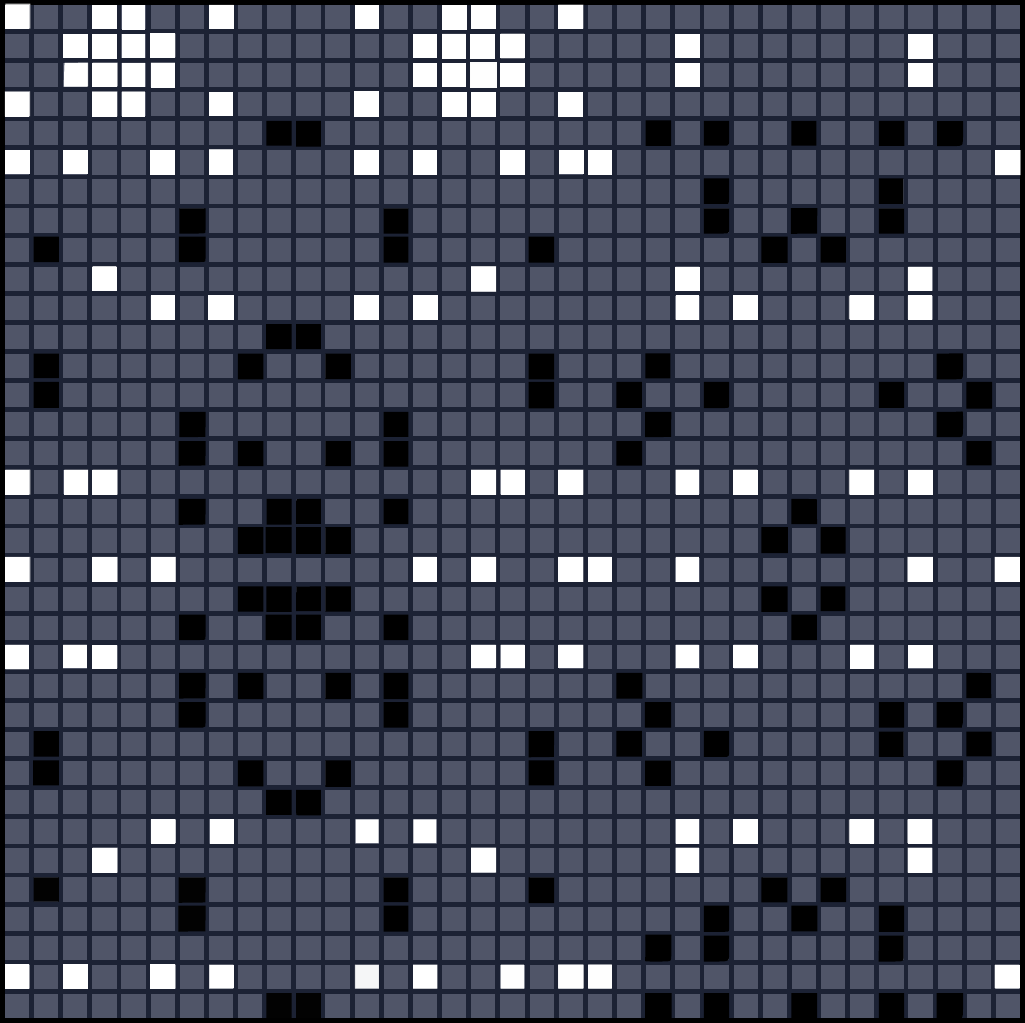}} 
\end{figure}

\begin{figure}[H]
\captionsetup[subfloat]{labelformat=empty}
    \subfloat[$n=37, \min\{|B|, |W|\}=126$]{\includegraphics[width = \textwidth]{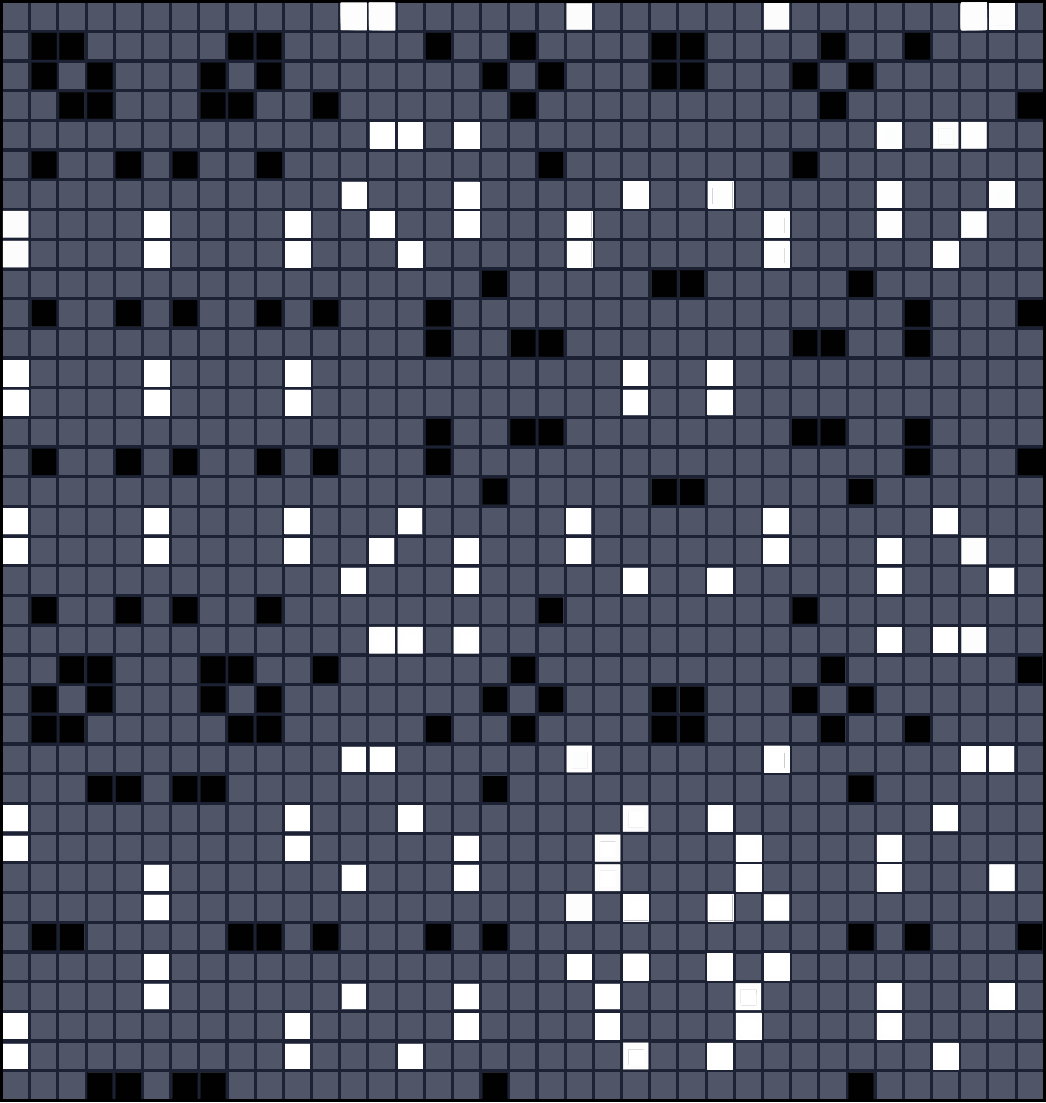}}
    \end{figure}

\begin{figure}[H]
\captionsetup[subfloat]{labelformat=empty}
\subfloat[$n=39, \min\{|B|, |W|\}=144$]{\includegraphics[width = \textwidth]{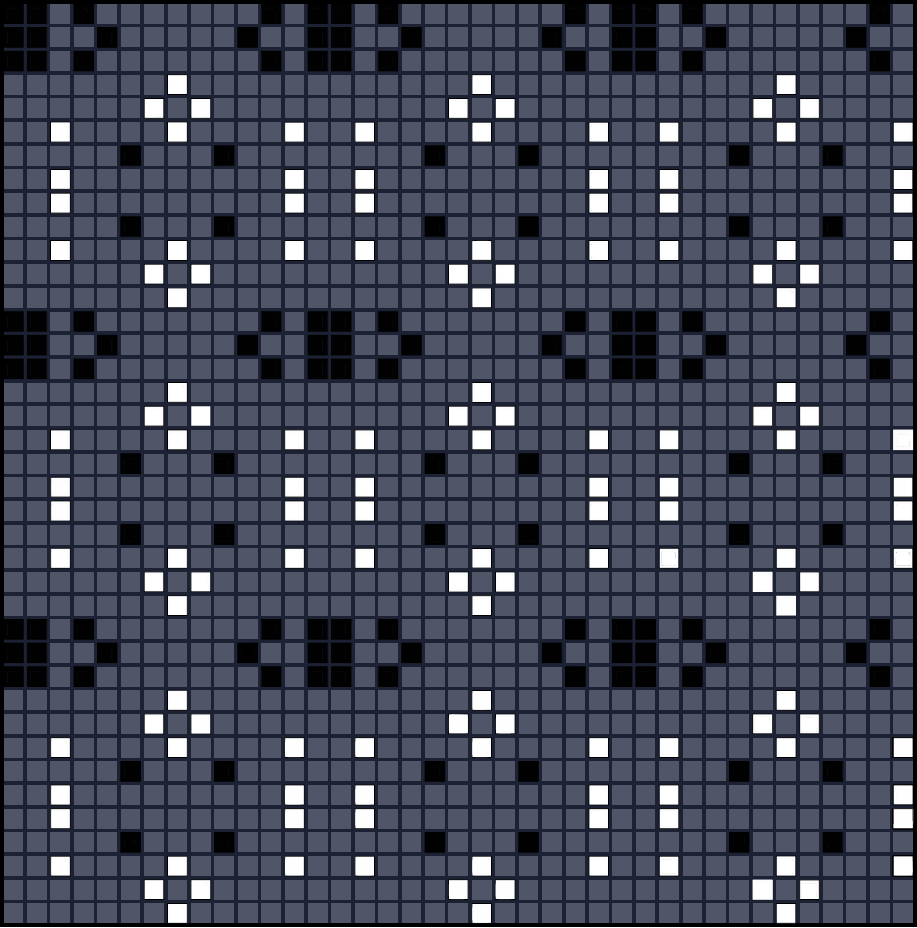}} 
\end{figure}

\begin{figure}[H]
\captionsetup[subfloat]{labelformat=empty}
\subfloat[$n=41, \min\{|B|, |W|\}=156$]{\includegraphics[width = \textwidth]{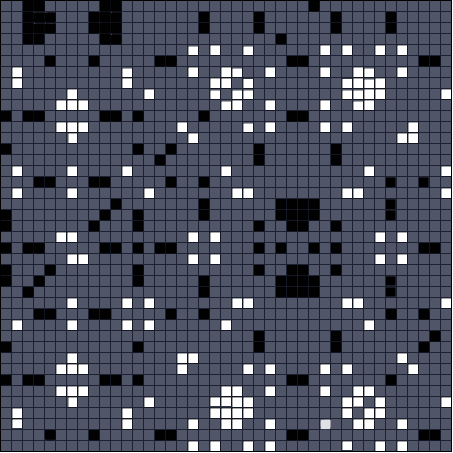}} \end{figure}

\begin{figure}[H]
\captionsetup[subfloat]{labelformat=empty}
\subfloat[$n=43, \min\{|B|, |W|\}=162$]{\includegraphics[width = \textwidth]{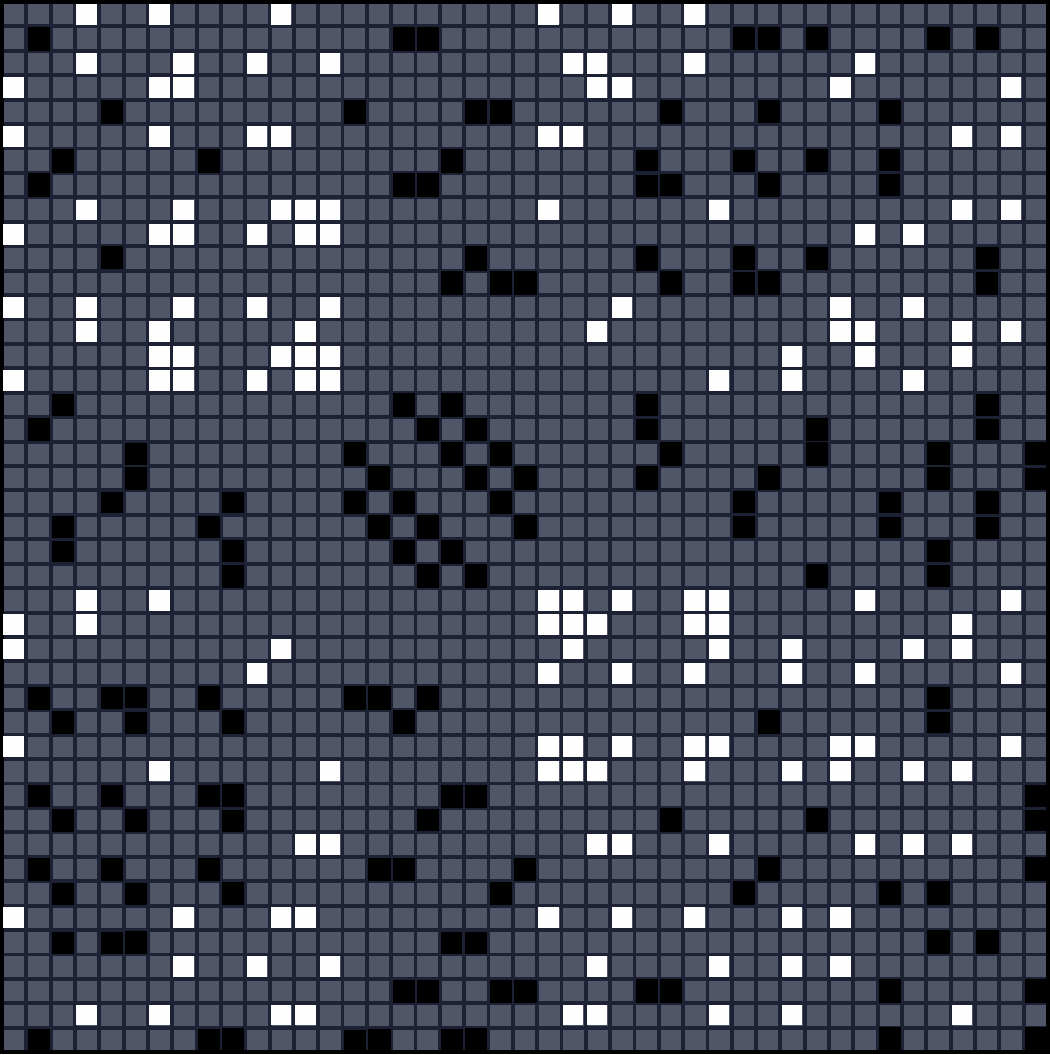}}  \end{figure}

\begin{figure}[H]
\captionsetup[subfloat]{labelformat=empty}
    \subfloat[$n=45, \min\{|B|, |W|\}=184$]{\includegraphics[width = \textwidth]{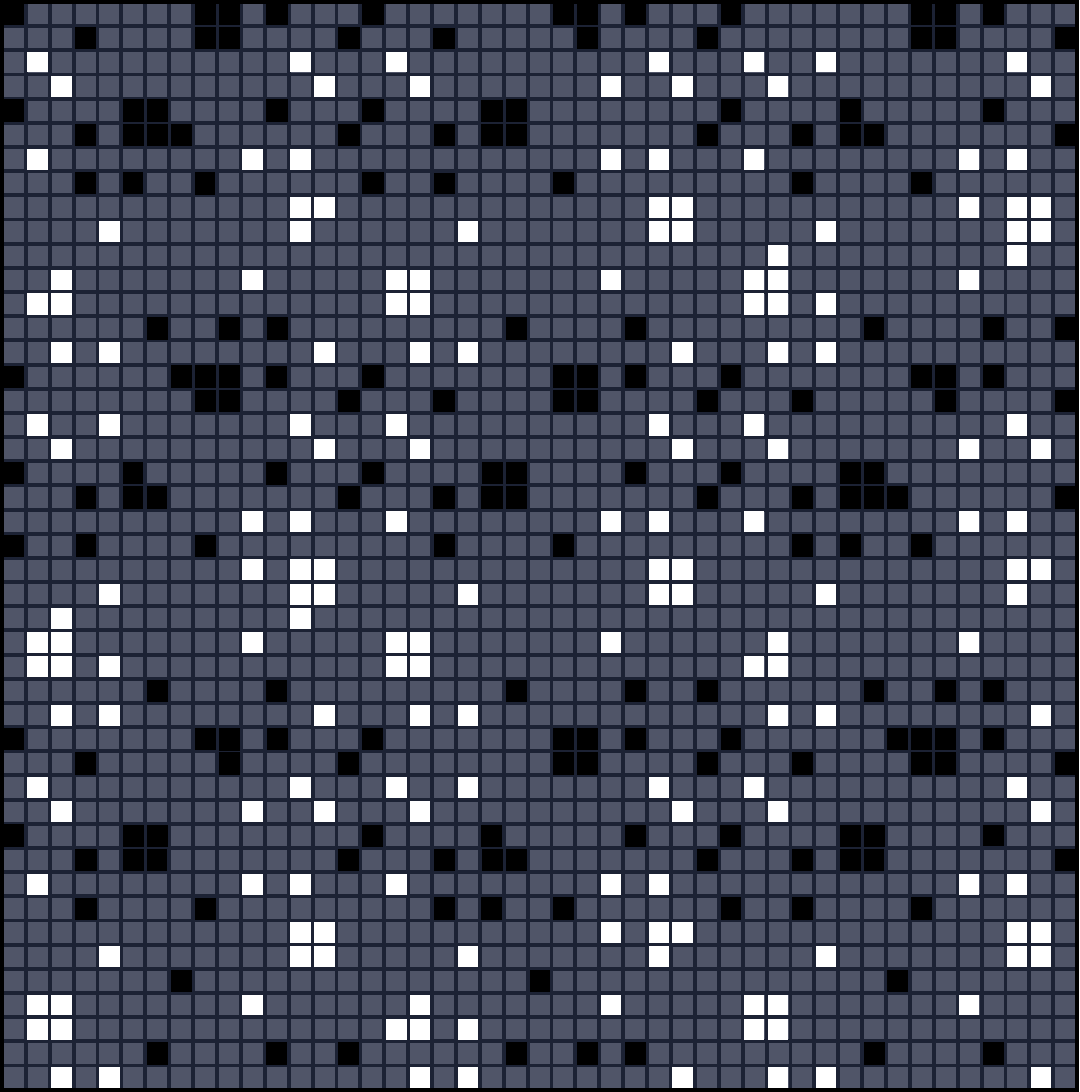}} 
\end{figure}

\begin{figure}[H]
\captionsetup[subfloat]{labelformat=empty}
\subfloat[$n=47, \min\{|B|, |W|\}=197$]{\includegraphics[width = \textwidth]{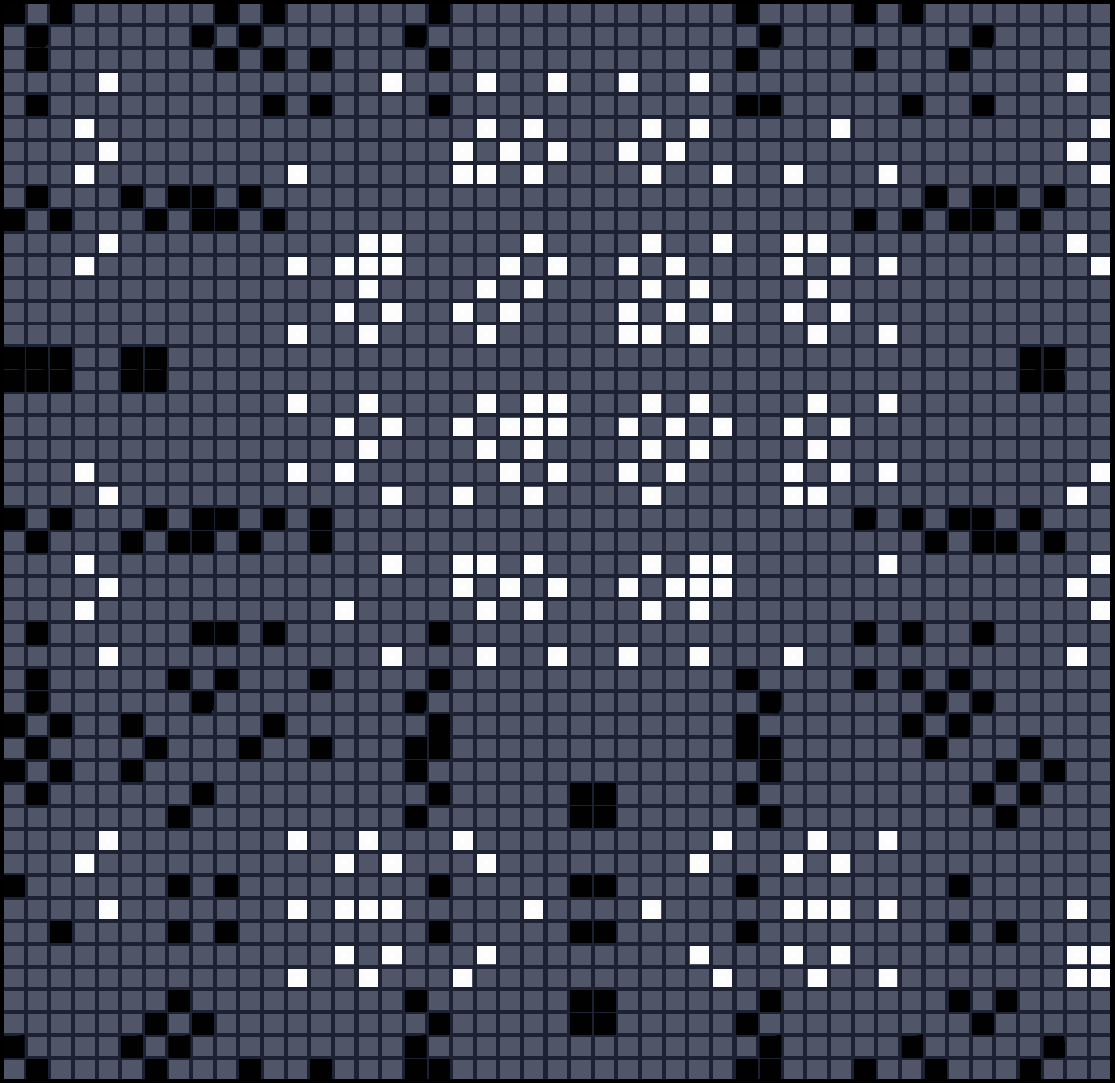}} 
\end{figure}

\begin{figure}[H]
\captionsetup[subfloat]{labelformat=empty}
\subfloat[$n=49, \min\{|B|, |W|\}=212$]{\includegraphics[width = \textwidth]{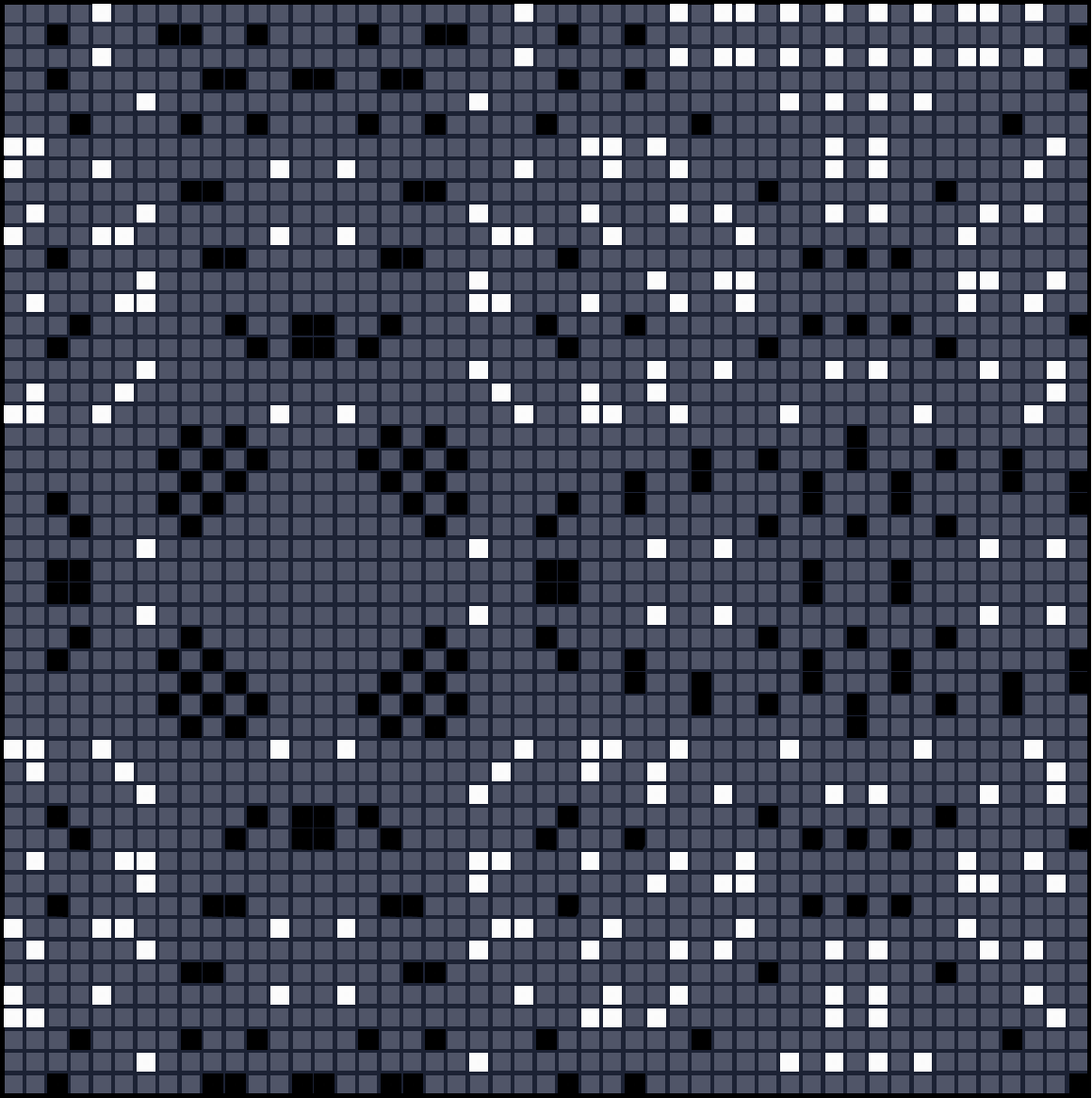}} \end{figure}

\begin{figure}[H]
\captionsetup[subfloat]{labelformat=empty}
\subfloat[$n=51, \min\{|B|, |W|\}=252$]{\includegraphics[width = \textwidth]{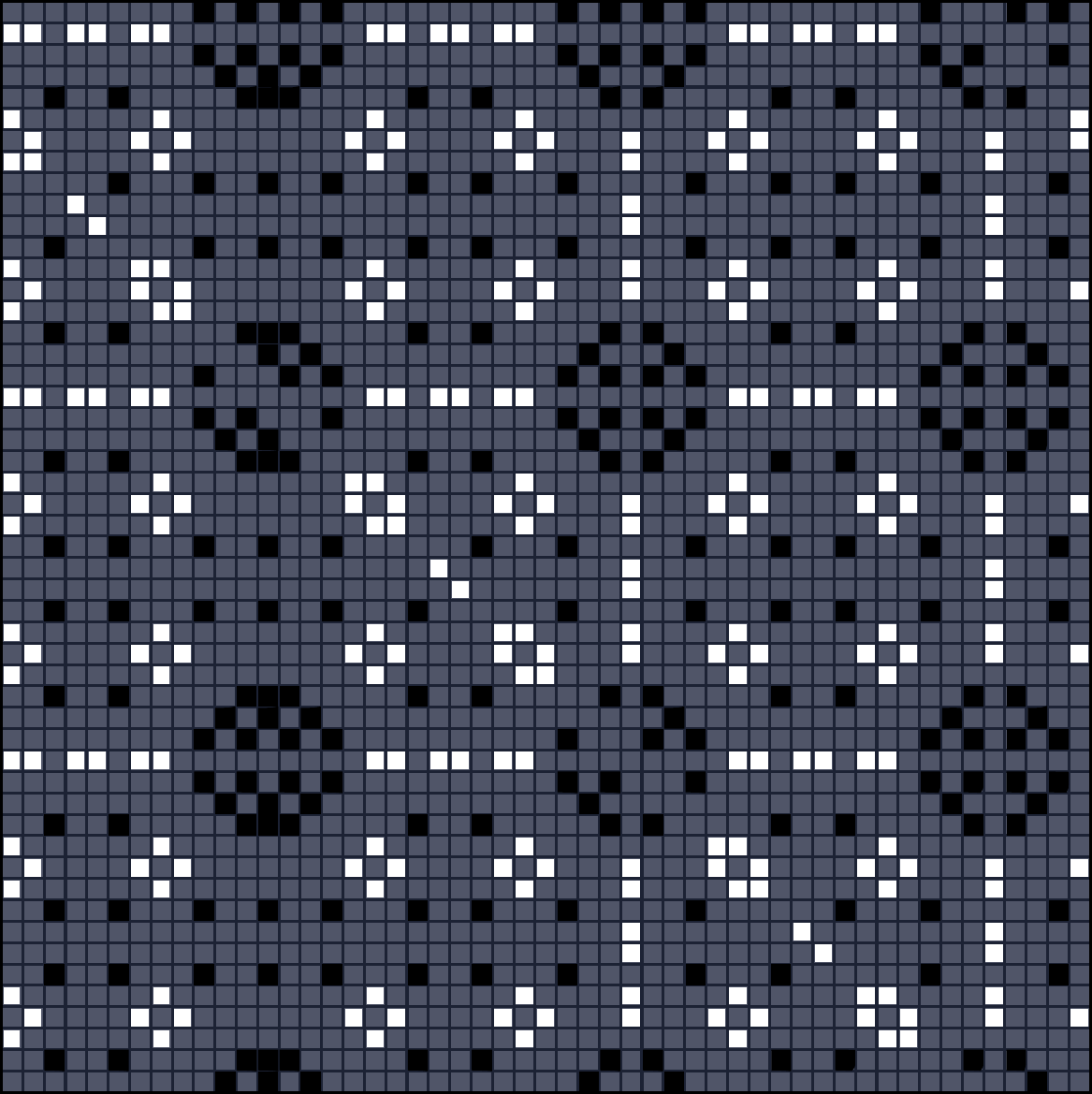}}  \end{figure}

\begin{figure}[H]
\captionsetup[subfloat]{labelformat=empty}
    \subfloat[$n=53, \min\{|B|, |W|\}=250$]{\includegraphics[width = \textwidth]{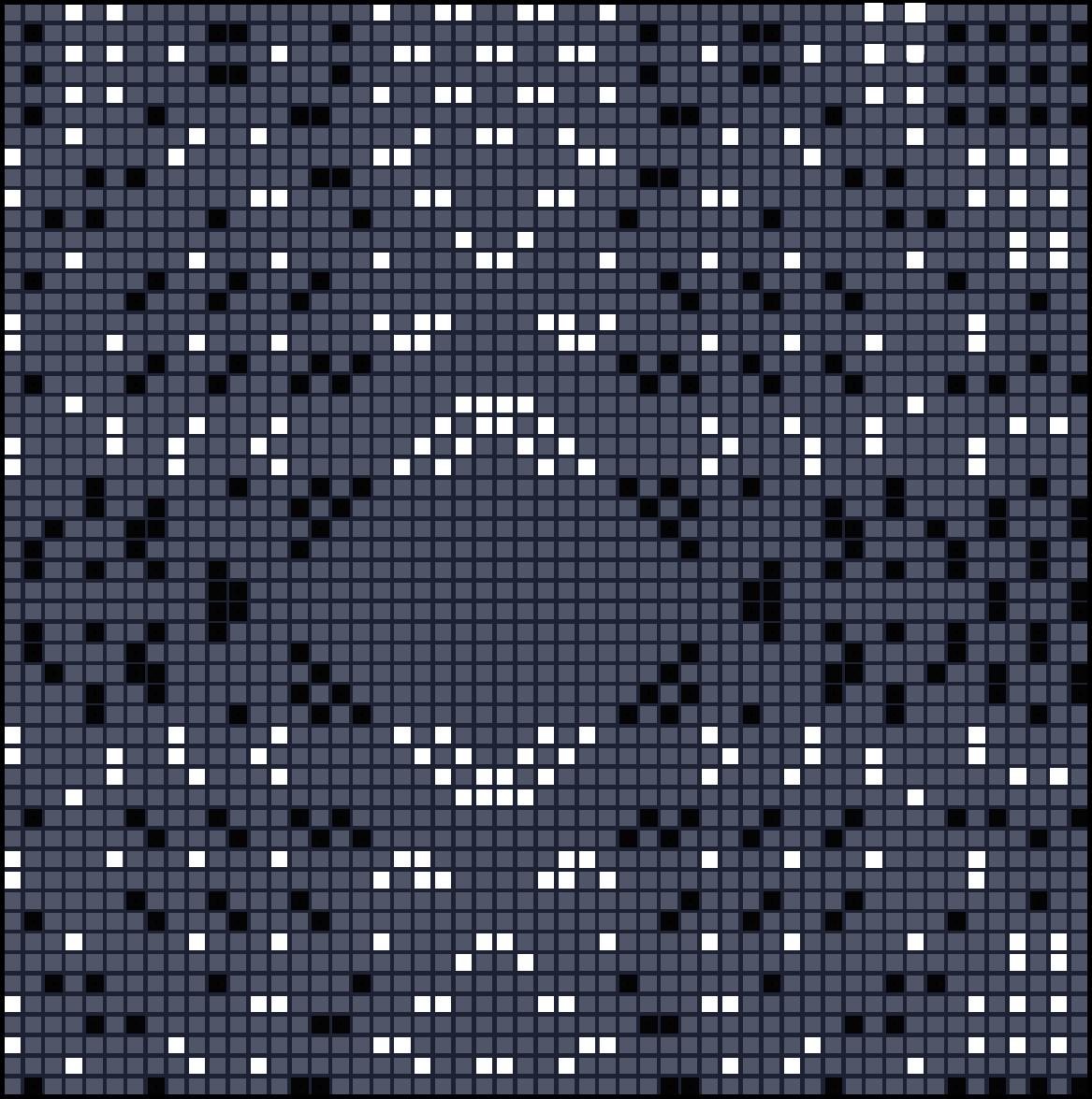}} 
    \end{figure}

\begin{figure}[H]
\captionsetup[subfloat]{labelformat=empty}
\subfloat[$n=55, \min\{|B|, |W|\}=264$]{\includegraphics[width = \textwidth]{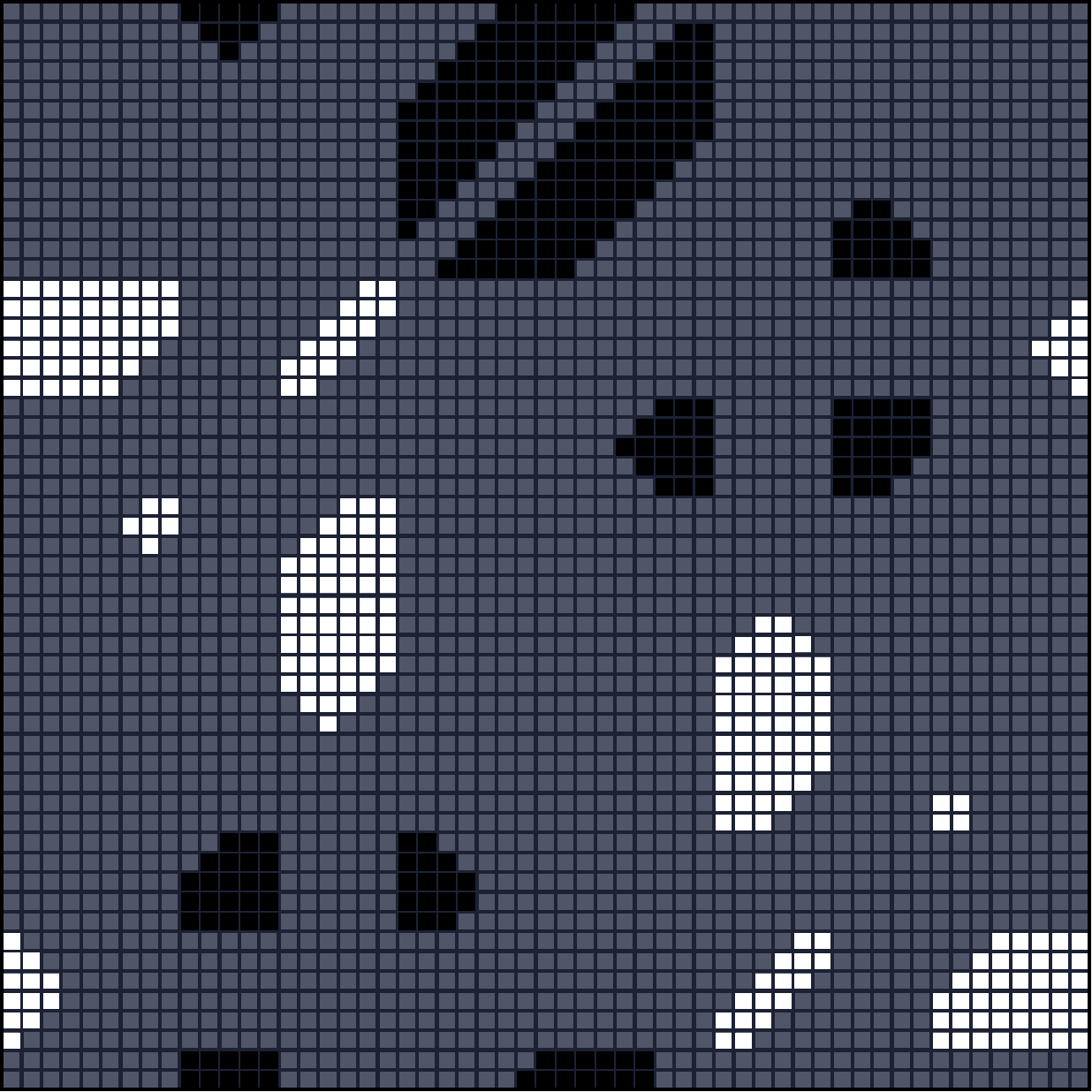}}
\end{figure}

\begin{figure}[H]
\captionsetup[subfloat]{labelformat=empty}
\subfloat[$n=57, \min\{|B|, |W|\}=285$]{\includegraphics[width = \textwidth]{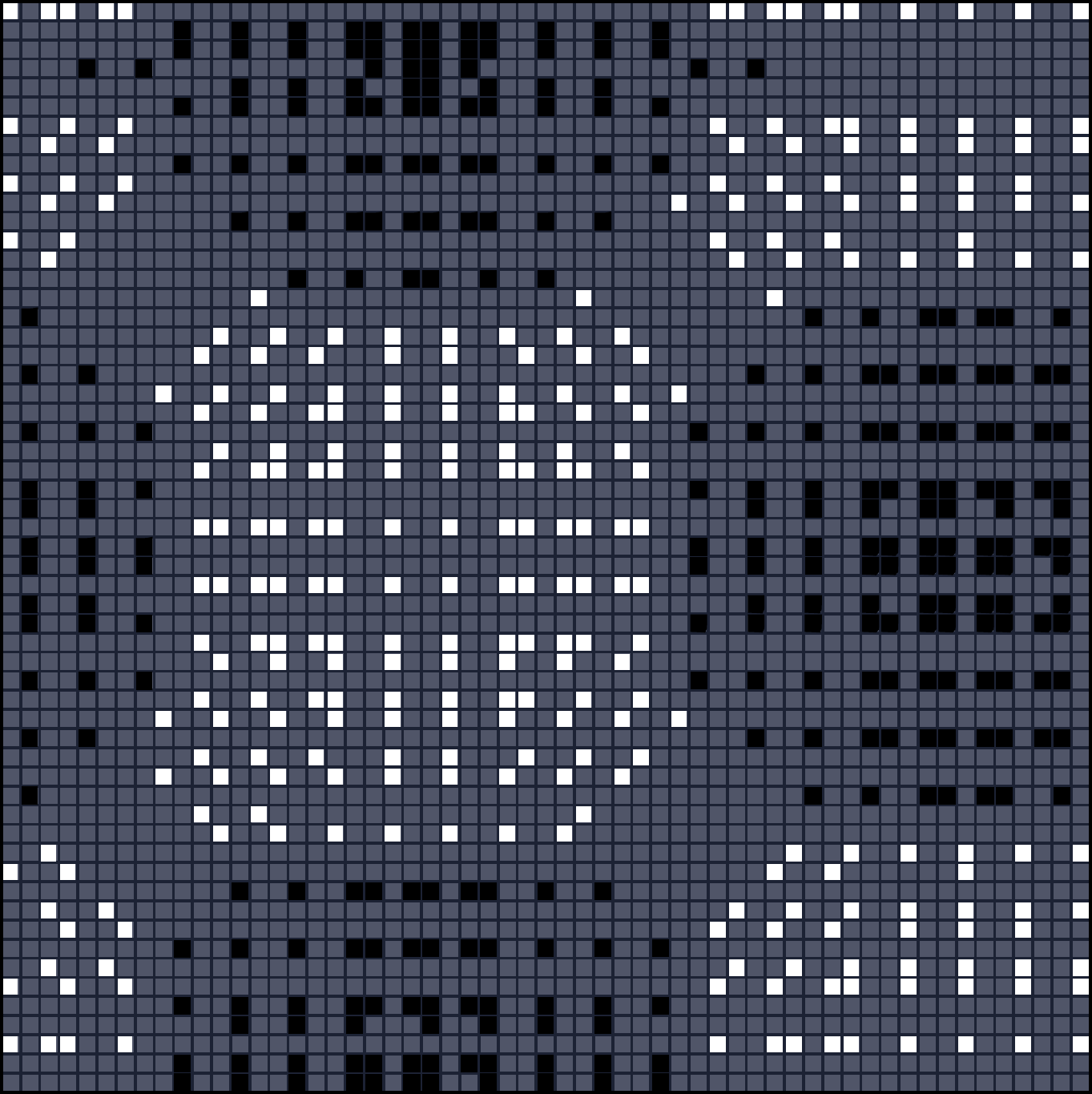}} \end{figure}

\begin{figure}[H]
\captionsetup[subfloat]{labelformat=empty}
\subfloat[$n=59, \min\{|B|, |W|\}=304$]{\includegraphics[width = \textwidth]{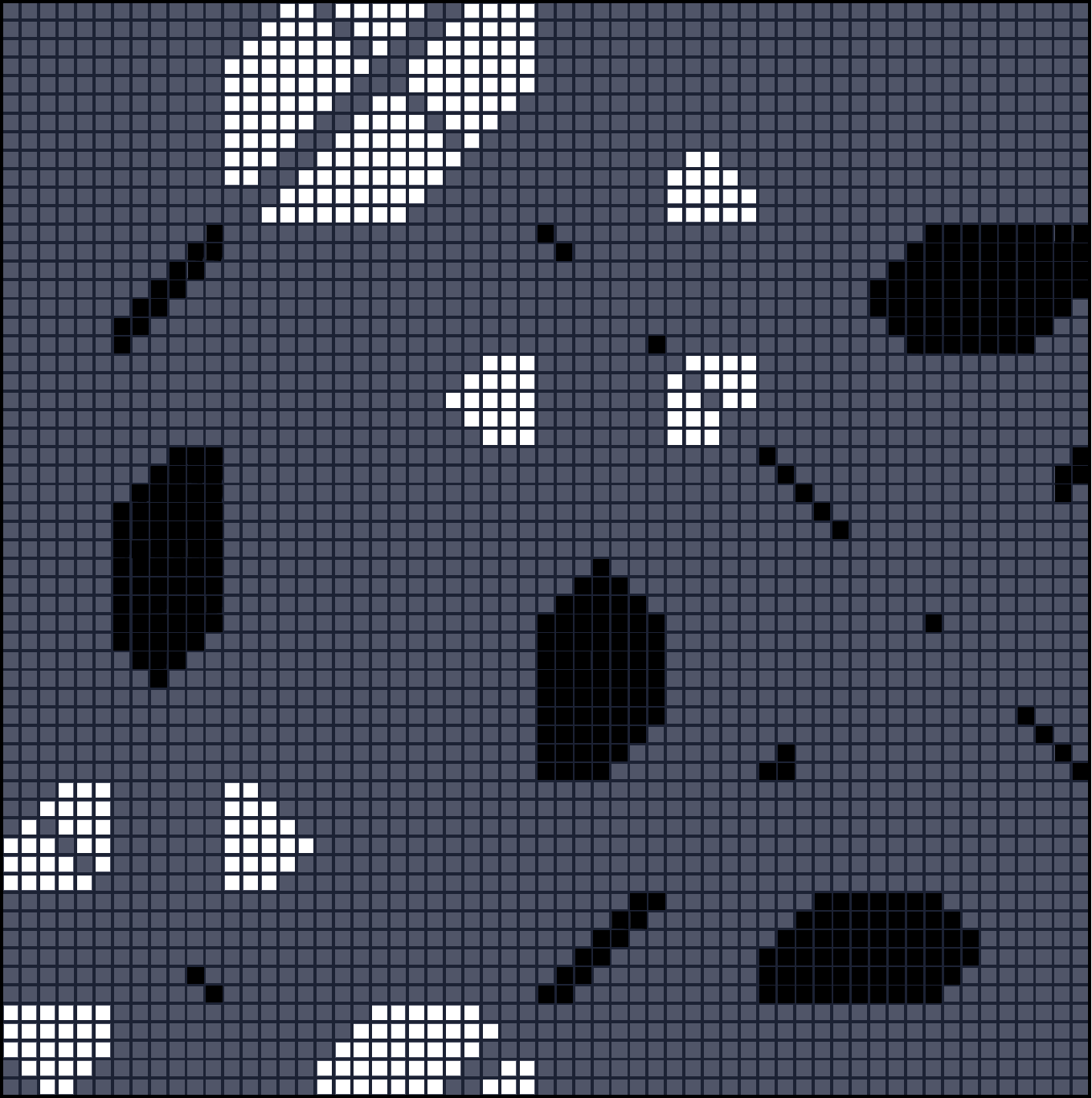}} 
\end{figure}

\begin{figure}[H]
\captionsetup[subfloat]{labelformat=empty}
\subfloat[$n=61, \min\{|B|, |W|\}=324$]{\includegraphics[width = \textwidth]{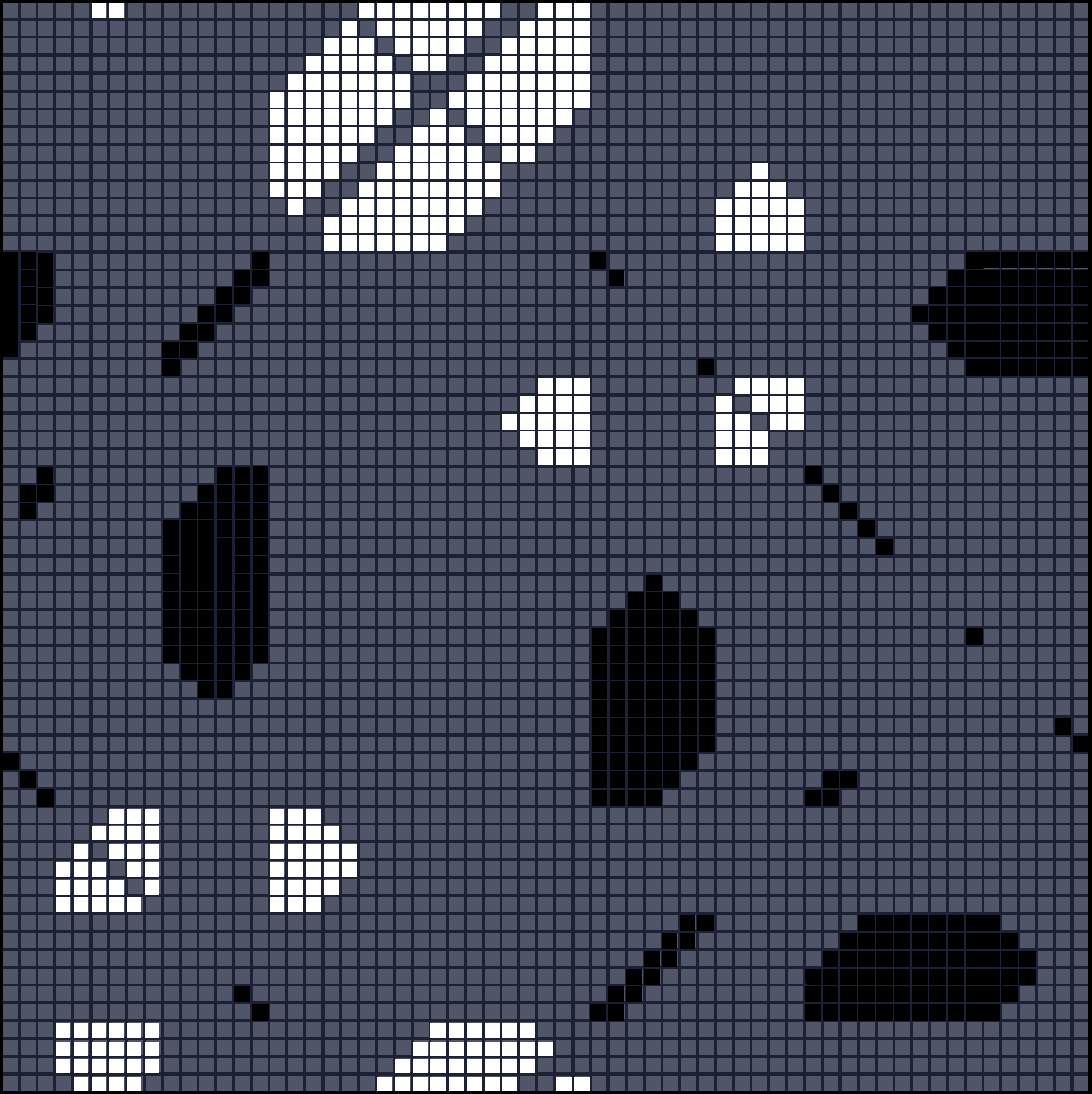}}
\end{figure}

\begin{figure}[H]
\captionsetup[subfloat]{labelformat=empty}
\subfloat[$n=63, \min\{|B|, |W|\}=348$]{\includegraphics[width = \textwidth]{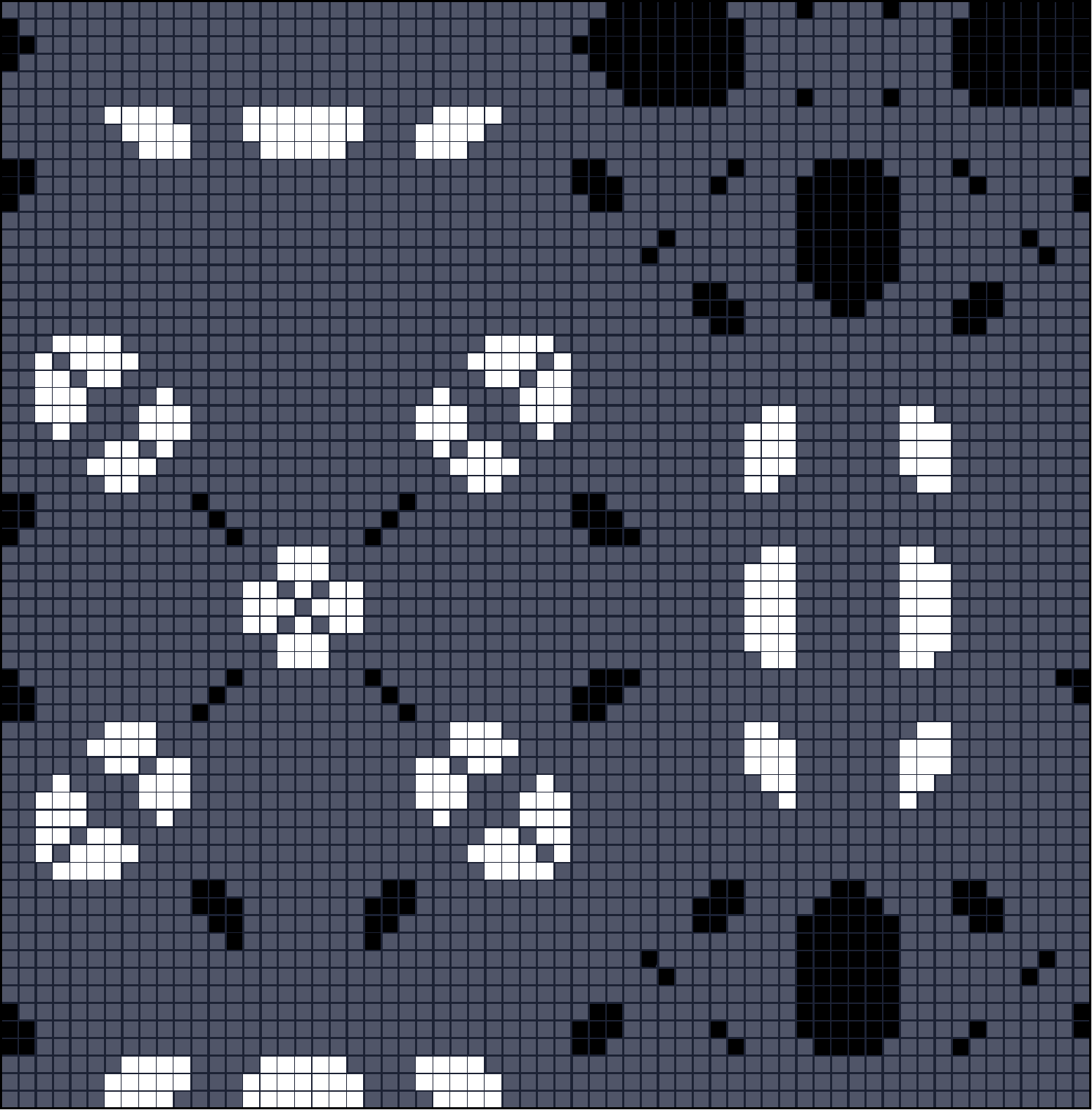}}
\end{figure}

\begin{figure}[H]
\captionsetup[subfloat]{labelformat=empty}
\subfloat[$n=63, \min\{|B|, |W|\}=348$]{\includegraphics[width = \textwidth]{importantconfigurations/63x63Torus348C_inkscape1.pdf}}
\end{figure}

\end{document}